\documentclass[]{sn-jnl}

\usepackage{graphicx}%
\usepackage{multirow}%
\usepackage{amsmath,amssymb,amsfonts}%
\usepackage{amsthm}%
\numberwithin{equation}{section}
\usepackage{changes}
\usepackage{tikz}
\usetikzlibrary{shapes.geometric, arrows.meta, positioning}
\setdeletedmarkup{}
\usepackage{mathrsfs}%
\usepackage[title]{appendix}%
\usepackage{xcolor}%
\usepackage{textcomp}%
\usepackage{manyfoot}%
\usepackage{booktabs}%
\usepackage{algorithm}%
\usepackage{algorithmicx}%
\usepackage{algpseudocode}%
\usepackage{upgreek}
\usepackage{listings}%
\usepackage{lineno} 
\usepackage{comment}
\usepackage{geometry}
\usepackage{tikz}
\usepackage{caption}
\usepackage{adjustbox}
\usetikzlibrary{shapes.geometric, arrows}
\tikzstyle{startstop} = [rectangle, rounded corners, 
minimum width=3cm, 
minimum height=1cm,
text centered, 
draw=black, 
fill=red!30]

\tikzstyle{io} = [trapezium, 
trapezium stretches=true, 
trapezium left angle=70, 
trapezium right angle=110, 
minimum width=3cm, 
minimum height=1cm, text centered, 
draw=black, fill=blue!30]

\tikzstyle{process} = [rectangle, 
minimum width=3cm, 
minimum height=1cm, 
text centered, 
text width=3cm, 
draw=black, 
fill=orange!30]

\tikzstyle{decision} = [diamond, 
minimum width=3cm, 
minimum height=2cm,
text centered, 
draw=black, 
fill=green!30]
\tikzstyle{arrow} = [thick,->,>=stealth]
\usepackage[style=numeric, backend=biber]{biblatex}
\theoremstyle{thmstyleone}%
\newtheorem{theorem}{Theorem}[section]
\newtheorem{lemma}[theorem]{Lemma}
\newtheorem{corollary}[theorem]{Corollary}
\newtheorem{proposition}[theorem]{Proposition}
\newtheorem{definition}[theorem]{Definition}

\newtheorem{condition}{$\mathcal{B}$}

\DeclareMathOperator*{\Limsup}{Lim\,sup}

\theoremstyle{thmstyletwo}%

\theoremstyle{thmstylethree}%

\begin{document}


\title[Article Title]{Nonmonotone Coderivative-Based Newton Methods for Nonsmooth Optimization with Machine Learning Applications}

\author[1]{\fnm{Sina} \sur{Kazemdehbashi}}\email{hm3369@wayne.edu}
\author[1]{\fnm{Yanchao} \sur{Liu}}\email{yanchaoliu@wayne.edu}
\author[2]{\fnm{Boris S.} \sur{Mordukhovich}}\email{aa1086@wayne.edu}

\affil[1]{\orgname{Department of Industrial \& Systems Engineering, Wayne State University}, \orgaddress{\city{Detroit}, \state{Michigan}, \country{USA}}}
\affil[2]{\orgname{Department of Mathematics \textcolor{black}{and the Institute for AI and Data Science}, Wayne State University}, \orgaddress{\city{Detroit}, \state{Michigan}, \country{USA}}}

\abstract{

Newton-type algorithms are among the most effective methods for solving optimization problems because of their rapid local convergence. However, extending these methods to nonsmooth optimization is challenging due to the difficulty of incorporating second-order information. To address this issue, we propose a coderivative-based Newton framework for solving both unconstrained and constrained nonsmooth optimization problems using tools from variational analysis and generalized differentiation. The proposed method employs generalized Hessians, defined as coderivatives of the subgradient mapping, and is applicable to both $C
^{1,1}$ functions and convex composite optimization problems with extended-real-valued components. To enhance robustness, the algorithm incorporates an adaptive regularization strategy that enables it to handle problems whose generalized Hessians are positive semidefinite. In addition, a Hybrid Adaptive Nonmonotone (HAN) line search scheme is developed as a globalization technique to improve global convergence and practical performance. Under standard assumptions, both the exact and inexact versions of the algorithm are globally convergent and achieve local superlinear convergence when the associated subgradient mapping satisfies the semismooth* property. The proposed framework is further extended to convex composite optimization through the forward-backward envelope, allowing it to handle problems with or without strong convexity in the smooth component of the objective function. Numerical experiments on Lasso, logistic Lasso, and support vector machine (SVM) problems demonstrate the efficiency and robustness of the proposed algorithm. Comparisons with several well-established first-order and second-order methods for nonsmooth optimization show that the proposed approach is computationally competitive while maintaining strong convergence properties.

}

\keywords{Newton-type methods, Nonsmooth Optimization, Variational Analysis, Machine Learning, Lasso, Support Vector Machine (SVM), Logistic Lasso}

\maketitle

\section{Introduction}

The classical Newton method is a well-known and efficient algorithm for solving unconstrained optimization problems in which the objective function belongs to the class \( C^2 \) and is of the form
\begin{equation}\label{eq:unconstrained_main_problem}
\min f(x) \quad \text{s.t.} \quad x \in \mathbb{R}^n ,
\end{equation}
provided that the Hessian matrix \( \nabla^2 f(\bar{x}) \) is positive definite in a neighborhood of the local solution \( \bar{x} \). Although the Newton algorithm is recognized as a fast method with quadratic or superlinear convergence rates, its convergence is guaranteed only when the initial point is sufficiently close to the local solution. This limitation, along with the high computational cost and limited applicability, is among the main drawbacks of this method. Accordingly, several strategies—such as Armijo line search, trust-region methods, and cubic regularization—are used to design improved Newton methods that achieve global convergence while preserving quadratic or superlinear local convergence rates. In application areas such as machine learning, some unconstrained optimization problems arise whose objective functions are not twice continuously differentiable, making the classical Newton method inapplicable.
To address this challenge, several generalized Newton methods have been proposed in the literature that are applicable to functions in the class \( C^{1,1} \), meaning that the objective function is continuously differentiable with locally Lipschitz continuous gradients (second-order nonsmooth). Among these methods, the \emph{semismooth Newton method} (SNM) is particularly popular. It relies on Clarke’s generalized Jacobian \cite{clarke2023optimization}, and there is a rich body of literature on this method and its modifications. For further details and background, see the books \cite{facchinei2003finite,izmailov2014newton,klatte2002nonsmooth} and the references therein. Further Newton-type methods addressing nonsmooth equations, generalized equations, and optimization and variational problems are presented in \cite{facchinei2003finite, bonnans1994local,hoheisel2012generalized,izmailov2014newton,klatte2002nonsmooth, nesterov2018lectures, robinson1994newton}.

This paper contributes to a recent line of research in variational analysis through the investigation of nonsmooth Newton methods whose algorithmic frameworks rely on Mordukhovich coderivatives \cite{mordukhovich2009variational} rather than Clarke generalized Jacobians. The second-order construction of variational analysis is equipped with comprehensive calculus rules and admit explicit representations in terms of the problem data for broad classes of nonsmooth functions arising in optimization and its applications, including machine learning, data science, statistics, biochemical modeling, and many other areas; see \cite{mordukhovich2024second} for additional details and references. The literature contains several second-order algorithms \cite{khanh2024globally,chao2024coderivative,duy2023generalized,mordukhovich2021generalized} based on the Mordukhovich coderivative of the gradient or generalized Hessian.

In \cite{khanh2024globally}, the authors proposed a coderivative-based Newton method in which the Armijo line search is employed as the globalization strategy for \( C^{1,1} \) functions, with applications to composite convex optimization problems. The main advantage of the Armijo line search is its low computational cost; however, a major drawback is that it may select very small step sizes. To address this limitation, \cite{chao2024coderivative} proposed a coderivative-based Newton method that uses the Wolfe line search as the globalization strategy. Although this approach can yield larger step sizes, it incurs higher computational cost compared to the Armijo line search, since the Wolfe line search requires evaluating an additional curvature condition involving the gradient of the function.

To overcome the drawbacks of the Armijo and Wolfe line searches, we propose a Hybrid Adaptive Nonmonotone (HAN) line search that introduces a step-size-dependent switching mechanism. Unlike traditional nonmonotone methods, which adapt memory gradually or apply curvature conditions uniformly, the HAN line search selectively increases the nonmonotone memory and activates curvature checks only when the trial step size is small, while allowing large steps that satisfy the nonmonotone Armijo condition to be accepted directly without curvature evaluation. This design reduces the computational cost associated with curvature evaluations while avoiding excessively small step sizes. Based on the proposed line search, we develop a nonmonotone coderivative-based Newton method in which the HAN line search is employed as the globalization strategy. For $C^{1,1}$ function, we establish global convergence and a superlinear local convergence rate for the proposed method to a tilt-stable minimizer as defined in Section \ref{sec:tools_variational}.

\deleted{line search methods adjust only the step size along a prescribed search direction and do not modify the direction itself. In contrast, trust region methods are a globalization strategy that simultaneously control both the step size and the search direction. Moreover, trust region methods can achieve convergence even under indefinite conditions and do not require the search direction to be a descent direction. These methods are based on constructing a local approximation of the objective function within a neighborhood (the trust region) around the current iterate. Trust region Newton-type methods have been extensively studied in the literature [2,2,2,2,2], and trust region variants of nonsmooth second-order algorithms have also been investigated in [2,2,2]. In this work, we propose a trust region coderivative-based Newton algorithm in which, compared to existing approaches, the generalized Hessian is used instead of the Clarke Jacobian, and a trust region framework is employed as the globalization strategy. We establish global convergence as well as a locally superlinear convergence rate for $C^{1,1}$ functions. Overall, by explicitly restricting the region where the quadratic model is trusted, trust region methods offer greater robustness to indefinite or ill-conditioned Hessians and provide stronger theoretical guarantees than line search approaches, particularly for nonconvex optimization problems.}

In contrast to the constant regularization strategies adopted in the recent works \cite{khanh2024globally,chao2024coderivative}, we employ two adaptive regularization schemes. The first scheme is motivated by trust-region methods: it compares the reduction predicted by the second-order model with the actual reduction in the objective function and adjusts the regularization parameter accordingly, decreasing it when the model accurately predicts the objective reduction and increasing it otherwise. The second scheme adapts the regularization parameter according to the step length produced by the HAN line search, decreasing the parameter whenever a unit step size is accepted and increasing it otherwise. We establish that both adaptive regularization strategies retain the global convergence and local superlinear convergence guarantees of the proposed algorithm. Furthermore, numerical experiments demonstrate that these strategies enhance computational efficiency and improve practical performance.

In this work, we apply the proposed algorithm to the following convex composite minimization problem:
\begin{equation*}
    \min_{x \in \mathbb{R}^n} \; h(x) + g(x),
\end{equation*}
where $h$ is a smooth convex function and $g : \mathbb{R}^n \to \bar{\mathbb{R}} := (-\infty, \infty]$ is a lower semicontinuous extended real-valued convex function. We reformulate this problem as a $C^{1,1}$ optimization problem using the Forward--Backward Envelope (FBE), which enables the application of the proposed algorithm. By specializing $h$ to a quadratic function, we further apply our approach to solve the Lasso problem. Originally introduced by Tibshirani~\cite{tibshirani1996regression}, the Lasso has widespread applications in statistics, machine learning, and artificial intelligence.
Compare to the proposed algorithm in \cite{khanh2024globally,chao2024coderivative} our algorithm is different in the aspect of the globalization stategy and regularization technique. Next, we apply the proposed method to the Support Vector Machine (SVM) problem. Introduced by Cortes and Vapnik \cite{cortes1995support}, SVM is one of the most widely used supervised learning models for classification and regression. Finally, we employ the proposed algorithm as the inner solver within a proximal Newton framework and develop a coderivative-based proximal Newton (CPN) method to handle the general case in which $h$ is not quadratic. The effectiveness of the proposed CPN method is demonstrated on the Logistic Lasso problem.

 Finally, we summarize the main contributions of this paper as follows. First, we propose a nonmonotone coderivative-based Newton method equipped with a Hybrid Adaptive Nonmonotone (HAN) line search and an adaptive regularization strategy. Second, under suitable assumptions, we extend existing convergence results for coderivative-based Newton methods to the setting of inexact second-order information. Third, we present numerical experiments on convex composite optimization problems with quadratic smooth terms arising in well-known machine learning models, including the Lasso and Support Vector Machine (SVM). Also, we develop a coderivative-based proximal Newton (CPN) method to handle the case where the smooth component is non-quadratic, and demonstrate its performance on the Logistic Lasso problem.

The rest of the paper is organized as follows. In Section~\ref{sec:tools_variational}, we review some preliminary concepts and tools from variational analysis that will be used throughout the paper. In Section~\ref{sec:nonmonotone_coderivative_algorithm}, we propose the NCN method for solving optimization problem with $C^{1,1}$ objective function, using the HAN line search as a globalization strategy. We also describe the adaptive regularization mechanism and the HAN line search procedure incorporated into the proposed method. In Section~\ref{sec:convergence_analysis}, we establish a general convergence framework for coderivative-based Newton methods. In particular, we prove global convergence and superlinear local convergence rate under both exact and inexact second-order information. Section~\ref{sec:application_lasso} presents an application of the proposed NCN method to convex composite optimization problems with a quadratic smooth component through the forward-backward envelope. Within this framework, we consider the Lasso and SVM problems as representative applications arising in machine learning. In addition, we propose a CPN method for convex composite optimization problems whose smooth component is nonquadratic; also, we consider logistic lasso as an well-known exnample in this framework. In Section~\ref{sec:experiments}, we present numerical experiments to evaluate the performance of the NCN method on Lasso and SVM problems and compare it with several well-established first- and second-order methods from the literature. We also investigate the numerical performance of the CPN method on the logistic Lasso problem and compare it with existing methods. Finally, Section~\ref{sec:conclusion} summarizes the main theoretical results and numerical findings of the paper and outlines several directions for future research.

\section{Tools of Variational Analysis}\label{sec:tools_variational}
At the beginning of this section, we review some preliminary notation. Throughout this work, $\mathbb{B}$ denotes the closed unit ball, $\mathbb{B}_r(x):=x+r\mathbb{B}$ denotes the closed ball of radius $r>0$ centered at $x$, and $\mathbb{N}:={1,2,3,\dots}$ denotes the set of natural numbers.

Given a set-valued mapping $F:\mathbb{R}^n \rightrightarrows \mathbb{R}^m$ between finite-dimensional spaces, its \textit{outer limit} (Painlevé-Kuratowski) at $\bar{x}$ is defined as
\begin{equation}
    \Limsup_{x \to \bar{x}} F(x):=\{y \in \mathbb{R}^n \ |\ \exists \ \textnormal{sequences } x_k \to \bar{x},\  y_k \to y\ \  \text{with} \ \ y_k \in F(x_k),\  k\in \mathbb{N} \}.
\end{equation}

In the following, we review some useful tools from variational analysis that will be employed in this work, see \cite{mordukhovich2018variational,rockafellar1998variational} for additional materials. Given $\Omega \subset \mathbb{R}^s$, the (Fr\'echet) \textit{regular normal cone} to $\Omega$ at $\bar{x}\in \Omega$ is defined as: 
\begin{equation}
    \widehat{N}_{\Omega}(\bar{z}):=\{v \in \mathbb{R}^s \ |\  \limsup_{z\overset{\Omega}{\rightarrow} \bar{z}} \frac{\langle v,z-\bar{z}\rangle}{\|z-\bar{z}\|}\le0\},
\end{equation}
where $z \overset{\Omega}{\rightarrow}\bar{z}$ means that $z \to \bar{z}$ with $z \in \Omega$. Also, the (Mordukhovich) \textit{limiting normal cone} to set $\Omega$ at $\bar{z}\in \Omega$ is defined as:
\begin{equation}\label{eq:limiting_normal_cone}
    N_{\Omega}(\bar{z}):= \Limsup_{z \overset{\Omega}{\rightarrow}\bar{z}} \widehat{N}_{\Omega}(z).
\end{equation}

Let $f:\mathbb{R}^n\to\overline{\mathbb{R}}$ be an extended-real-valued function where
$\mathrm{dom}\, f:=\{x\in \mathbb{R}^n \ |\ f(x)<\infty\}$ and $\mathrm{epi}\,f:=\{(x,y) \in \mathbb{R}^{n+1}\ | \ y\ge f(x)\}$. Then, the (limiting) \textit{subdifferential} of $f$ at $\bar{x} \in \mathrm{dom}\, f$ is defined as:
\begin{equation}\label{eq:limiting_subdifferential_definition}
    \partial f(\bar{x}):=\{v \in \mathbb{R}^n \ |\ (v,-1)\in N_{\mathrm{epi}\, f}(\bar{x},f(\bar{x}))\},
\end{equation}
where $N_{\mathrm{epi}\, f}(\bar{x},f(\bar{x}))$ is defined in (\ref{eq:limiting_normal_cone}).

For a set-valued mapping (multifunction) $F:\mathbb{R}^n\rightrightarrows\mathbb{R}^m$, its graph is defined as
$
\operatorname{gph}F
:=
\{(x,y)\in\mathbb{R}^n\times\mathbb{R}^m\mid y\in F(x)\}.
$
The \textit{limiting coderivative} of $F$ at $(\bar{x},\bar{y})\in\operatorname{gph}F$ is the set-valued mapping
$D^\ast F(\bar{x},\bar{y}):\mathbb{R}^m\rightrightarrows\mathbb{R}^n$ defined by
\begin{equation}\label{eq:limiting_coderivative}
D^\ast F(\bar{x},\bar{y})(u)
:=
\big\{
v\in\mathbb{R}^n
\mid
(v,-u)\in N_{\operatorname{gph}F}(\bar{x},\bar{y})
\big\},
\qquad u\in\mathbb{R}^m,
\end{equation}
where $N_{\operatorname{gph}F}(\bar{x},\bar{y})$ denotes the limiting normal cone to $\operatorname{gph}F$ at $(\bar{x},\bar{y})$. In the case where $F=f$ is single-valued, we omit $\bar{y}$ from the notation in (\ref{eq:limiting_coderivative}). The following scalarization formula for a single-valued map $f:\mathbb{R}^n \to \mathbb{R}^m$ is useful for calculating its limiting coderivative.
\begin{equation}
    D^\ast f(\bar{x})(u)=\partial\langle u,f \rangle (\bar{x}) \ \ \ \text{for all}\ \ u\in \mathbb{R}^m.
\end{equation}

Given a single-valued mapping $f:\mathbb{R}^n \to \mathbb{R}^m$ that is locally Lipschitz continuous around $\bar{x}$, the (Clarke) \textit{generalized Jacobian} of $f$ at $\bar{x}$ is defined as the convex hull of the set of limiting Jacobian matrices, as follows:
\begin{equation}\label{eq:clark_jacobian}
    \overline{\nabla}f(\bar{x}):=\{\lim_{k\to \infty} \nabla f(x_k) \ |\ x_k \to \bar{x},\  x_k\in \Omega_f\},
\end{equation}
where $\Omega_f$ denotes the set of points at which $f$ is differentiable. The nonemptiness and compactness of (\ref{eq:clark_jacobian}) in $\mathbb{R}^{m \times n}$ follow from the classical Rademacher theorem. The following equality illustrates the relationship between the Clarke generalized Jacobian and the limiting coderivative:
\begin{equation}\label{eq:relatioship_coderivative_clark}
    \text{co}D^\ast f(\bar{x})(u)=\{A^{\ast}u \ |\ A\in \text{co}\overline{\nabla}f(\bar{x})\}, \ \ \ u \in \mathbb{R}^m.
\end{equation}

Next, let $f: \mathbb{R}^n \to \mathbb{R}^m$ be locally Lipschitz around $\bar{x}$. We say that $f$ is semismooth at $\bar{x}$ if it is directionally differentiable at $\bar{x}$ and satisfies the following estimate:
\begin{equation}
    f(x)-f(\bar{x})-A(x-\bar{x})=o(\|x-\bar{x}\|),
\end{equation}
when $x\to \bar{x}$ and $A \in \text{co} \overline{\nabla}f(x)$. This subclass of single-valued, locally Lipschitz mappings is particularly important in numerical optimization. In addition, we recall a more general concept of semismooth property. Given a set-valued mapping $F:\mathbb{R}^n \rightrightarrows \mathbb{R}^m$ is $\textit{semismooth}^\ast$ at $(\bar{x},\bar{y})\in \textnormal{gph} F$ if whenever $(u,v)\in \mathbb{R}^n \times \mathbb{R}^m$ we have 
\begin{equation}
    \langle u^\ast ,u \rangle=\langle v^\ast,v \rangle \ \ for \ all \  (v^\ast,u^\ast)\in \textnormal{gph}D^\ast F((\bar{x},\bar{y});(u,v)).
\end{equation}

Now, we recall several notions from variational analysis that will be used in establishing the superlinear convergence results presented in Section~\ref{sec:convergence_analysis}. These notions were introduced by Rockafellar and are presented in \cite{rockafellar1998variational}. A set-valued mapping $S:\mathbb{R}^n \rightrightarrows \mathbb{R}^m$ is proto-differentiable at $(\bar{x},\bar{y})\in \textnormal{gph} S$ if for any $\bar{\omega}\in \mathbb{R}^n$, $\bar{z}\in \text{Limsup}_{t \downarrow0,\omega \to \bar{\omega}}(S(\bar{x}+t \omega)-\bar{y})/t$, and $t_k \downarrow0$ there exists $\omega_k \rightarrow\bar{\omega}$ and $z_k \to \bar{z}$ such that $z_k\in (S(\bar{x}+t_k \omega_k))/t_k$ whenever $k \in \mathbb{N}$. Given $\varphi:\mathbb{R}^n \to\mathbb{\bar{R}}$ with $\bar{x}\in \textnormal{dom} \varphi$, consider the family of second-order finite difference 
\[
\Delta_{\tau}^2\varphi(\bar{x},v)(u):=\frac{\varphi(\bar{x}+\tau u)-\varphi(\bar{x})-\tau\langle v,u\rangle}{\frac{1}{2}\tau^2}
\]
and define the second subderivative of $\varphi$ at $\bar{x}$ for $v \in \mathbb{R}^n$ and $\omega \in \mathbb{R}^n$ by
\[
d^2 \varphi (\bar{x},v)(\omega):= \liminf_{\tau \downarrow0, u \to \omega}  \Delta_{\tau}^2 \varphi(\bar{x},v)(u)
\]
Then $\varphi$ is said to be twice epi-differentiable at $\bar{x}$ for $v$ if for every $\omega \in \mathbb{R}^n$ and every choice of $\tau_k \downarrow 0$ there exists a sequnce $\omega^k \to \omega$ such that 
\[
\frac{\varphi(\bar{x}+\tau_k \omega^k)-\varphi(\bar{x})-\tau_k \langle v,\omega^k \rangle}{\frac{1}{2} \tau_k^2}\to d^2 \varphi(\bar{x},v)(\omega)\ as\ k\to \infty.
\]

Twice epi-differentiability is a fundamental notion in second-order variational analysis and has numerous applications in optimization. We refer the reader to the aforementioned monograph by Rockafellar and Wets, as well as to the recent works~\cite{mohammadi2021parabolic,mohammadi2022variational,mohammadi2020twice}, which develop a symmetric approach to characterizing and verifying epi-differentiability through parabolic regularity.

 We recall the definition of the tilt-stbility for extended-real-valued functions.
 \begin{definition}
    Let $f:\mathbb{R}^n \to \mathbb{\overline{R}}$ and $\bar{x}\in \text{dom}\,f$; then $\bar{x}$ is a \textit{tilt-stable local minimizer} of the function $f$ if there exists $\gamma>0$ such that the argminimum mapping
\begin{equation}
    M_{\gamma}: v \mapsto \text{argmin} \{f(x)-\langle v,x \rangle \ |\ x\in \mathbb{B}_{\gamma}(\bar{x})\}
\end{equation}
is single-valued and Lipshitz continous on a neighborhood of $\bar{v}=0$ with $M_{\gamma}(\bar{v})=\{\bar{x}\}$. A \textit{modulus of tilt stability} of $f$ at $\bar{x}$ is defined as a Lipschitz constant for the mapping $M_\gamma$ in a neighborhood of the origin.
 
 \end{definition}

The following proposition provides a characterization of tilt-stable minimizers; see Theorem~3.2 in~\cite{MordukhovichNghia2015} for its proof.

 \begin{proposition}[Tilt stability via the second-order growth condition]
\label{prop:tilt_stability_second_order_growth}
Let $\varphi : \mathbb{R}^n \to \mathbb{R}$ be continuously prox-regular at $\bar{x}$ for $\bar{v} = 0$. The following are equivalent:
\begin{enumerate}[(i)]
    \item The point $\bar{x}$ is a tilt-stable minimizer of $\varphi$ with modulus $\kappa > 0$.
    
    \item There are neighborhoods $U$ of $\bar{x}$ and $V$ of $\bar{v}$ such that the inverse mapping 
    $(\partial \varphi)^{-1}$ admits a single-valued localization 
    $\gamma : V \to U$ around $(\bar{v}, \bar{x})$, and for any pair 
    $(v,u) \in \mathrm{gph}\,\gamma = \mathrm{gph}((\partial \varphi)^{-1}) \cap (V \times U)$, we have the uniform second-order growth condition
    \begin{equation}\label{eq:second_ordr_condition_of_the_tilt_stable_minimizer}
    \varphi(x) \geq \varphi(u) + \langle v, x - u \rangle + \frac{1}{2\kappa}\|x - u\|^2 \quad \text{whenever } x \in U.
    \tag{2.10}
    \end{equation}
\end{enumerate}
\end{proposition}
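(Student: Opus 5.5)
The argument will run through the two implications separately. Prox-regularity enters in both: it makes $\partial\varphi$ behave like a monotone operator near $(\bar{x},\bar{v})$. Continuous prox-regularity at $\bar{x}$ for $\bar{v}$ provides $\varepsilon>0$ and $r\ge 0$ such that
\begin{equation*}
\varphi(x)\ge \varphi(u)+\langle v,x-u\rangle-\frac{r}{2}\|x-u\|^2
\end{equation*}
whenever $x,u\in\mathbb{B}_\varepsilon(\bar{x})$, $v\in\partial\varphi(u)$, and $\|v-\bar{v}\|<\varepsilon$. The localization of $\partial\varphi$ is then $\varphi$-attentive automatically by continuity. After shrinking neighborhoods, this makes $\partial\varphi+rI$ locally monotone.

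For (ii)$\Rightarrow$(i), take $\gamma$ and the neighborhoods $U\supset\mathbb{B}_{\gamma_0}(\bar{x})$ and $V$ from (ii), and fix $v\in V$ near $0$ with $u=\gamma(v)$.
\begin{itemize}
\item The growth inequality \eqref{eq:second_ordr_condition_of_the_tilt_stable_minimizer} says exactly that $u$ is the unique minimizer of $\varphi-\langle v,\cdot\rangle$ over $U$, hence over $\mathbb{B}_{\gamma_0}(\bar{x})$, provided $u$ lies in that ball. This follows from continuity of $\gamma$ at $\bar{v}$, which I prove in the next step.
\item Take $v_1,v_2\in V$ with $u_i=\gamma(v_i)$. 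Write the growth inequality at $(v_1,u_1)$ with $x=u_2$, and at $(v_2,u_2)$ with $x=u_1$, then add them. This gives $\langle v_1-v_2,u_1-u_2\rangle\ge \kappa^{-1}\|u_1-u_2\|^2$.
\item Cauchy--Schwarz then yields $\|u_1-u_2\|\le\kappa\|v_1-v_2\|$. This gives the Lipschitz continuity with modulus $\kappa$, and in particular continuity at $\bar{v}$, which closes the loop in the first step. Hence $M_{\gamma_0}=\gamma$ near $0$, so $\bar{x}$ is tilt-stable with modulus $\kappa$.
\end{itemize}

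For (i)$\Rightarrow$(ii), the localization comes almost for free. For $v$ near $0$, the point $M_\gamma(v)$ lies in the interior of $\mathbb{B}_\gamma(\bar{x})$ by Lipschitz continuity, so Fermat's rule gives $v\in\partial\varphi(M_\gamma(v))$. Conversely, suppose $u$ is near $\bar{x}$ with $v\in\partial\varphi(u)$. The prox-regular inequality shows that $u$ minimizes $\varphi-\langle v,\cdot\rangle+\frac{r}{2}\|\cdot-u\|^2$ locally. Combining this with uniqueness of the tilted minimizer should give $u=M_\gamma(v)$. The cleanest route here is the subgradient characterization of tilt stability, namely strong local monotonicity of $\partial\varphi$ with constant $\kappa^{-1}$. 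So $(\partial\varphi)^{-1}$ has the single-valued localization $\gamma=M_\gamma$.

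The main obstacle is the uniform quadratic growth with the sharp constant $1/(2\kappa)$ at every pair $(v,u)$, not only at $(\bar{v},\bar{x})$. My plan for it has three steps.
\begin{itemize}
\item Consider the convex-like conjugate function $\psi(v):=-\min_{x\in\mathbb{B}_\gamma(\bar{x})}\{\varphi(x)-\langle v,x\rangle\}$. This is convex, as a supremum of affine functions of $v$.
\item By Danskin/tilt arguments, $\psi$ is differentiable near $0$ with $\nabla\psi=M_\gamma$, which is $\kappa$-Lipschitz.
\item A convex function with $\kappa$-Lipschitz gradient has a $\kappa^{-1}$-strongly convex conjugate. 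One identifies $\psi^*$ with $\varphi+\delta_{\mathbb{B}_\gamma(\bar{x})}$ up to convexification near $\bar{x}$. Prox-regularity together with single-valuedness of the localization ensures that the convexification agrees with $\varphi$ on $U$, and this is the delicate point.
\end{itemize}
Strong convexity of $\psi^*$ with subgradient $v$ at $u$ gives exactly \eqref{eq:second_ordr_condition_of_the_tilt_stable_minimizer} for $x\in U$. The conjugacy identification on a neighborhood is where I expect to spend the effort, following the approach of \cite{MordukhovichNghia2015}.
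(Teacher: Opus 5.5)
\textbf{Verdict.} Your direction (ii)$\Rightarrow$(i) is correct and complete, and it does not even use prox-regularity. In (i)$\Rightarrow$(ii), the single-valued localization is fine provided you really invoke the strong-monotonicity characterization of tilt stability. The proximal inequality plus uniqueness that you sketch first does not by itself force $u=M_\gamma(v)$, so that citation is doing real work. For comparison, the paper gives no argument of its own; it simply cites Theorem~3.2 of \cite{MordukhovichNghia2015}.

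\textbf{The gap.} The uniform growth step is planned around claims that are false in general:
\begin{itemize}
\item ``$\psi^*$ (the convexification of $\varphi+\delta_{\mathbb{B}_\gamma(\bar{x})}$) agrees with $\varphi$ on $U$'' fails;
\item ``$\psi^*$ is $\kappa^{-1}$-strongly convex'' also fails.
\end{itemize}
Take $n=1$, $\varphi(x)=|x|-x^2$, $\bar{x}=0$. This function is convex plus $C^2$, hence continuously prox-regular. Moreover $M_\gamma\equiv 0$ near $v=0$, so $\bar{x}$ is tilt-stable with any modulus. However, $\psi(v)=\max\{0,\gamma(|v|-1+\gamma)\}$, so $\nabla\psi$ jumps at $|v|=1-\gamma$. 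Its conjugate is $\psi^*(x)=(1-\gamma)|x|$ on $[-\gamma,\gamma]$. This lies strictly below $\varphi$ for $0<|x|<\gamma$ and is not strongly convex on any neighborhood of $0$. The global duality fact you quote does not apply, because $\nabla\psi=M_\gamma$ is Lipschitz only near $0$.

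\textbf{The repair.} No identification is needed, only the definition of $\psi$ and a local descent lemma. For $x\in\mathbb{B}_\gamma(\bar{x})$ and every $w$, the definition of $\psi$ gives $\varphi(x)\ge\langle w,x\rangle-\psi(w)$. Also $\psi(v)=\langle v,u\rangle-\varphi(u)$ for $u=M_\gamma(v)$.
\begin{itemize}
\item Choose $\delta>0$ so that $M_\gamma$ is single-valued and $\kappa$-Lipschitz on $\mathbb{B}_{2\delta}(0)$.
\item Let $\|v\|<\delta$ and $\|x-u\|\le\kappa\delta$, and set $w:=v+\kappa^{-1}(x-u)$.
\item The segment $[v,w]$ stays in $\mathbb{B}_{2\delta}(0)$, so $\psi(w)\le\psi(v)+\langle u,w-v\rangle+\frac{\kappa}{2}\|w-v\|^2$.
\end{itemize}
Therefore
\begin{equation*}
\varphi(x)\ge\langle w,x\rangle-\psi(v)-\langle u,w-v\rangle-\tfrac{\kappa}{2}\|w-v\|^2=\varphi(u)+\langle v,x-u\rangle+\tfrac{1}{2\kappa}\|x-u\|^2 .
\end{equation*}
This is \eqref{eq:second_ordr_condition_of_the_tilt_stable_minimizer}, once two conditions hold:
\begin{itemize}
\item $U\subset\mathbb{B}_\gamma(\bar{x})$ has diameter at most $\kappa\delta$;
\item $V\subset\mathbb{B}_\delta(0)$ is small enough that $M_\gamma(V)\subset U$ and $V\times U$ lies inside the region where your cited monotonicity result makes the localization single-valued.
\end{itemize}

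In conjugate language, the same descent-lemma bound gives a quadratic minorant of $\psi^*$ at $u$. To transfer it to $\varphi$ you need only $\psi^*\le\varphi$ on the ball and $\psi^*(u)=\varphi(u)$ at $u=M_\gamma(v)$. Both are automatic, and agreement on all of $U$ is never needed.
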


Next, we recall some classical notions from convex and variational analysis; see, e.g.,~\cite{rockafellar1998variational}. Let $\varphi:\mathbb{R}^n\to\overline{\mathbb{R}}$ be a proper, l.s.c, extended-real-valued function, and let $\gamma>0$. The \textit{Moreau envelope} of $\varphi$ with parameter $\gamma$ is defined by
\begin{equation}\label{eq:Moreau_Envelope}
    e_{\gamma}\varphi(x)
    :=
    \inf_{y\in\mathbb{R}^n}
    \left\{
        \varphi(y)+\frac{1}{2\gamma}\|y-x\|^2
    \right\},
\end{equation}
and the \textit{proximal mapping} is defined by
\begin{equation}\label{eq:Proximal_mapping}
    \operatorname{Prox}_{\gamma\varphi}(x)
    :=
    \operatorname*{argmin}_{y\in\mathbb{R}^n}
    \left\{
        \varphi(y)+\frac{1}{2\gamma}\|y-x\|^2
    \right\}.
\end{equation}
When $\gamma=1$, we simplify the notation in
\eqref{eq:Moreau_Envelope} and \eqref{eq:Proximal_mapping} to
$e_{\varphi}(x)$ and $\operatorname{Prox}_{\varphi}(x)$, respectively.

In the last part of this section, we recall some general notions and notation that will be used throughout the paper. For any two nonempty sets $A,B\subset\mathbb{R}^n$, their \textit{Hausdorff distance} is defined by
\begin{equation}\label{eq:Hausdorff_distance}
    d_H(A,B)
    :=
    \max\left\{
        \sup_{a\in A}\inf_{b\in B}\|a-b\|,
        \sup_{b\in B}\inf_{a\in A}\|b-a\|
    \right\}.
\end{equation}
For a set $\Omega\subset\mathbb{R}^n$, its \textit{indicator function} $\delta_{\Omega}$ is defined as
\[
    \delta_{\Omega}(x)
    :=
    \begin{cases}
        0, & x\in\Omega,\\
        +\infty, & x\notin\Omega.
    \end{cases}
\]
For a nonempty set $\Omega\subset\mathbb{R}^n$, the distance from $x\in\mathbb{R}^n$ to $\Omega$ is denoted by
\[
    \operatorname{dist}(x;\Omega)
    :=
    \inf_{y\in\Omega}\|x-y\|.
\]
Finally, for a symmetric matrix $H\in\mathbb{R}^{n\times n}$, we denote its smallest and largest eigenvalues by $\lambda_{\min}(H)$ and $\lambda_{\max}(H)$, respectively.

\section{Nonmonotone Coderivative-Based Newton Method}\label{sec:nonmonotone_coderivative_algorithm}
In this section, we study the unconstrained optimization problem~\eqref{eq:unconstrained_main_problem} with a cost function
$f : \mathbb{R}^n \to \mathbb{R}$ of class $C^{1,1}$. This class of optimization problems is of fundamental importance due to its wide range of applications, particularly in machine learning. Representative examples include $\ell_2$-loss support vector regression \cite{ho2012large}, subproblems arising in augmented Lagrangian methods \cite{rockafellar1974augmented,rockafellar2023augmented,hang2022augmented}, as well as other practical applications \cite{hiriart1984generalized}. A coderivative-based extension of the classical Newton method for the local solution of problem~(\ref{eq:unconstrained_main_problem}) was first proposed and analyzed in~\cite{mordukhovich2021generalized}, under the fundamental assumption that the point $\bar{x}$ is a tilt-stable local minimizer of (\ref{eq:unconstrained_main_problem}). To achieve global convergence of the coderivative-based Newton method, the authors of \cite{khanh2024globally,chao2024coderivative} proposed globalization strategies based on backtracking line search techniques for solving \eqref{eq:unconstrained_main_problem}. Building upon these developments, we propose a nonmonotone coderivative-based Newton method that combines three key ingredients: a coderivative-based Newton direction, an adaptive regularization strategy for handling positive semidefinite generalized Hessians, and a Hybrid Adaptive Nonmonotone (HAN) line search scheme for globalization. We first introduce the HAN line search, which serves as the globalization mechanism for the proposed algorithm.

\subsection{Hybrid Adaptive Nonmonotone (HAN) line search}\label{sec:HAN}

Classical globalization strategies for Newton-type methods are commonly based on either Armijo or Wolfe line search techniques. While the Armijo line search is computationally inexpensive, it may accept excessively small step sizes, resulting in slow practical convergence. In contrast, the Wolfe line search often produces larger step sizes but typically requires additional gradient evaluation, leading to a higher computational cost. These complementary strengths and weaknesses motivate the development of a new line search strategy that combines the advantages of both approaches.

\begin{algorithm} \caption{Hybrid Adaptive Nonmonotone (HAN) line search} \label{alg:HAN_linesearch} \begin{algorithmic}[1] \State \textbf{Input:} Search direction $d\in\mathbb{R}^n$, switching thresholds $0 \le C_2 \le C_1 \le1$, memory parameters $M_1,M_2\ge0$, reduction factor $0<\beta<1$, expansion factor $\gamma>1$, and maximum expansion iterations $S\in\mathbb{N}$. \State Set $\alpha\gets1$. \State \textbf{Step 1:} Apply the Armijo line search with reduction factor $\beta$. If an acceptable step size satisfying $\alpha>C_1$ is obtained, terminate; otherwise, proceed to Step 2. \State \textbf{Step 2:} Apply the nonmonotone Armijo line search with memory size $M_1$ and reduction factor $\beta$. If an acceptable step size satisfying $\alpha>C_2$ is obtained, terminate; otherwise, proceed to Step 3. \State \textbf{Step 3:} Apply the nonmonotone Wolfe line search with memory size $M_2$, reduction factor $\beta$, expansion factor $\gamma$, and maximum expansion iterations $S$. \State \Return $\alpha$. \end{algorithmic} \end{algorithm}

To this end, we propose a \emph{Hybrid Adaptive Nonmonotone} (HAN) line search. The HAN strategy begins with the classical Armijo line search because of its low computational cost. If the step size falls below a prescribed threshold, the algorithm switches to a nonmonotone Armijo line search, which remains computationally inexpensive while relaxing the monotonicity requirement to promote the acceptance of larger step sizes. If the step size decreases further, the algorithm transitions to a nonmonotone Wolfe line search. Although this stage incurs a higher computational cost due to the additional curvature condition, it is more effective at preventing excessively small step sizes and can often recover larger, more productive steps. Consequently, the HAN line search adaptively balances computational efficiency with the acceptance of larger step sizes by progressively employing more sophisticated line search strategies only when necessary.

Algorithm~\ref{alg:HAN_linesearch} presents the general framework of the proposed HAN line search. The parameters $C_1$ and $C_2$ ($C_1 \ge C_2$) denote the first and second switching thresholds, respectively. The vector $d$ represents the Newton search direction. The parameter $M_1$ is the memory size used by the nonmonotone Armijo line search, which is activated when the step size $\alpha$ falls below $C_1$, whereas $M_2$ is the memory size used by the nonmonotone Wolfe line search, which is activated when $\alpha$ falls below $C_2$. The parameter $\beta\in(0,1)$ is the backtracking reduction factor, $\gamma>1$ is the step-size expansion factor employed during the Wolfe line search, and $S$ denotes the maximum number of expansion iterations permitted during the Wolfe line search.

Furthermore, the proposed HAN line search provides a unified framework that encompasses the Armijo, nonmonotone Armijo, and Wolfe line search strategies as special cases under appropriate choices of the algorithmic parameters. In particular, when $C_1=C_2=0$, the HAN line search becomes the classical Armijo line search. When $C_1=1$ and $C_2=0$, it reduces to the nonmonotone Armijo line search. When $C_1=C_2=1$ and $M_2=0$, the HAN line search reduces to the Wolfe line search. Moreover, by selecting different combinations of the switching thresholds and the remaining algorithmic parameters, various other hybrid line search strategies can be obtained within the proposed framework. A detailed description of the HAN line search is provided in Appendix~\ref{appendix:HAN}.

\subsection{Algorithm Description}

In this section, we propose the Nonmonotone Coderivative-based Newton (NCN) method, which generalizes the classical regularized Newton method. For the unconstrained optimization problem \eqref{eq:unconstrained_main_problem}, the search direction at each iteration of the NCN algorithm is obtained by solving the following inclusion:
\begin{equation} \label{eq:direction_inclusion_nonmonotone}
- \nabla f(x) \in D^\ast \nabla f(x)(d) + \mu R d,
\end{equation}
where $f$ is a $C^{1,1}$ function, $D^\ast \nabla f(x)$ denotes the generalized Hessian, or coderivative of the gradient mapping, $\mu>0$ is the regularization parameter, and $R$ is a positive definite symmetric regularization matrix.

\begin{algorithm}
  \caption{Nonmonotone Coderivatived-based Newton (NCN) algorithm}
    \label{alg:nomonotone-newton}
\begin{algorithmic}[1]
\State \textbf{Input:} Initial point \(x_0\in\mathbb{R}^n\), parameters \(q>0\) and \(p\in(0,1]\), and the HAN line search parameters \(M_1\), \(M_2\), \(C_1\), \(C_2\), \(\beta\), \(\gamma\), \(c\), \(c^\prime\), and \(S\) defined in Algorithm~\ref{alg:HAN_linesearch} (see the detailed version in Algorithm~\ref{alg:HAN_linesearch2}).

\For{$k = 0,1,2,\dots$}
    \If{$\nabla f(x_k)=0$ \textnormal{ or another stopping criterion is satisfied}}
        \State \textbf{stop}
    \Else
        \State Set $\mu_k$ as a coffecient of $q\|\nabla f(x_k)\|^{p}$ which is explaind in Section~\ref{sec:adaptive_regularization},  and choose $d_k \in \mathbb{R}^n$ such that
        \[
            -\nabla f(x_k) \in D^\ast \nabla f(x_k)(d_k)+\mu_k R_kd_k
        \]
    \EndIf
    \State $\alpha_k = \text{HAN line search(}d_k,C_1,C_2,M_1,M_2,\beta,\gamma, c, c^\prime, S)$ 
    \State Set $x_{k+1} := x_k + \alpha_k d_k$
\EndFor

\end{algorithmic}
\end{algorithm}

Algorithm~\ref{alg:nomonotone-newton} outlines the proposed NCN algorithm. Its well-posedness relies on the solvability of the inclusion \eqref{eq:direction_inclusion_nonmonotone}, which guarantees the existence of a search direction \(d_k\) at each iteration. The following proposition establishes this result in the more general framework allowing for inexact second-order information.

\begin{proposition}\label{prop:well-poseness-nonmonotone-Algorithm}

Let \( \hat{f} : \mathbb{R}^n \to \mathbb{R} \) be a \( C^{1,1} \) function representing either the exact function \(f\) in \eqref{eq:direction_inclusion_nonmonotone} or an approximation thereof. Suppose that \( \nabla f(x) \neq 0 \) and that the coderivative \(D^\ast\nabla\hat{f}(x)\) is positive semidefinite in the sense that
\begin{equation}\label{eq:positive_definitness_exsistance_proposition}
\langle v,u\rangle \ge 0
\quad
\text{for every } u\in\mathbb{R}^n\setminus\{0\}
\text{ and every }
v\in D^\ast\nabla\hat{f}(x)(u).
\end{equation}
Then, for every \(\mu>0\) and every symmetric positive definite matrix \(R\), there exists a nonzero vector \(d\in\mathbb{R}^n\) satisfying
\begin{equation}\label{eq:inclusion_existance_prop}
-\nabla f(x)\in D^\ast\nabla\hat{f}(x)(d)+\mu Rd.
\end{equation}
Moreover, \(d\) is a descent direction for \(f\) at \(x\), namely,
\[
\langle \nabla f(x),d\rangle<0.
\]

\end{proposition}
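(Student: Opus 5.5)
The plan is to reduce the inclusion \eqref{eq:inclusion_existance_prop} to a statement about the perturbed gradient mapping and then use the surjectivity of its coderivative.

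\textbf{Step 1: reformulation.} Consider the single-valued, locally Lipschitz mapping $\Phi(y):=\nabla\hat f(y)+\mu R y$. Since $y\mapsto \mu Ry$ is smooth, the elementary sum rule for coderivatives gives
\[
D^\ast\Phi(x)(u)=D^\ast\nabla\hat f(x)(u)+\mu R^\ast u=D^\ast\nabla\hat f(x)(u)+\mu R u,
\]
using the symmetry of $R$. Hence \eqref{eq:inclusion_existance_prop} is equivalent to $-\nabla f(x)\in D^\ast\Phi(x)(d)$. Moreover, \eqref{eq:positive_definitness_exsistance_proposition} implies that for every $u\ne0$ and $w\in D^\ast\Phi(x)(u)$,
\[
\langle w,u\rangle\ge \mu\langle Ru,u\rangle\ge \mu\lambda_{\min}(R)\|u\|^2 .
\]
So $D^\ast\Phi(x)$ is uniformly positive definite.

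\textbf{Step 2: existence (the main obstacle).} The uniform positive definiteness first gives $D^\ast\Phi(x)(0)=\{0\}$. Indeed, if $w\in D^\ast\Phi(x)(0)$, then the positive homogeneity of the coderivative together with the Lipschitz bound already forces $w=0$, since $\|w\|\le \ell\|u\|$ for a Lipschitz modulus $\ell$. Next I want the nonsingularity of $D^\ast\Phi(x)$ to yield metric regularity, hence local invertibility of $\Phi$. The natural tool is the Mordukhovich criterion: metric regularity holds iff $\ker D^\ast\Phi(x)=\{0\}$. Here the kernel is trivial, because $0\in D^\ast\Phi(x)(u)$ contradicts the bound of Step 1 for $u\ne0$.

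Metric regularity plus the positive-definiteness estimate then gives strong metric regularity. This is the point where I would rely on the known fact, as in \cite{khanh2024globally}, that a Lipschitz map with positive-definite coderivative is locally strongly monotone, hence a Lipschitz homeomorphism around $x$ with Lipschitz inverse $\Phi^{-1}$.

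The coderivative inverse rule then gives
\[
D^\ast\Phi(x)(u)\ni w \iff u\in D^\ast\Phi^{-1}(\Phi(x))(w).
\]
Since $\Phi^{-1}$ is locally Lipschitz, the scalarization formula shows that $D^\ast\Phi^{-1}(\Phi(x))(w)=\partial\langle w,\Phi^{-1}\rangle(\Phi(x))$ is nonempty for every $w$. Taking $w=-\nabla f(x)$ yields some $d$ with $-\nabla f(x)\in D^\ast\Phi(x)(d)$, as required. This surjectivity argument is the step I consider most delicate; as a fallback I would cite the corresponding existence proposition from \cite{khanh2024globally} for $\mu R$-regularized positive-semidefinite generalized Hessians.

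\textbf{Step 3: nonzero and descent.} If $d=0$, then $-\nabla f(x)\in D^\ast\Phi(x)(0)=\{0\}$, which contradicts $\nabla f(x)\ne0$. So $d\ne0$. Writing $-\nabla f(x)=v+\mu Rd$ with $v\in D^\ast\nabla\hat f(x)(d)$ gives
\[
\langle\nabla f(x),d\rangle=-\langle v,d\rangle-\mu\langle Rd,d\rangle\le-\mu\lambda_{\min}(R)\|d\|^2<0,
\]
so $d$ is a descent direction for $f$ at $x$.
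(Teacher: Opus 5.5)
Your proposal is correct and follows essentially the same route as the paper. You pass to $\Phi=\nabla\hat f+\mu R(\cdot)$ (the paper's $\nabla g$) and use positive definiteness of $D^\ast\Phi(x)$ to get strong local monotonicity, hence a single-valued Lipschitz localization of $\Phi^{-1}$. You then get nonemptiness of its coderivative by scalarization and finish with $D^\ast\Phi(x)(0)=\{0\}$ and the semidefiniteness estimate; the detour through the Mordukhovich criterion is not needed.

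One slip: the inverse rule carries signs, $w\in D^\ast\Phi(x)(u)\iff -u\in D^\ast\Phi^{-1}(\Phi(x),x)(-w)$. So you should choose $-d\in D^\ast\Phi^{-1}(\Phi(x),x)(\nabla f(x))$, as the paper does. This is harmless here, since that set is nonempty for every argument.
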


\begin{proof}
Define the auxiliary function
\[
g(x):=\hat{f}(x)+\frac{\mu}{2}\langle x,Rx\rangle .
\]
Since the quadratic term is \(C^2\), the coderivative sum rule (Theorem 3.9 in \cite{mordukhovich2018variational}) yields
\[
D^\ast\nabla g(x)(d)
=
D^\ast\nabla\hat{f}(x)(d)+\mu Rd.
\]
Hence, it suffices to prove the existence of a vector \(d\) satisfying $
-\nabla f(x)\in D^\ast\nabla g(x)(d).$

We first show that \(D^\ast\nabla g(x)\) is positive definite. Let
\(u\neq0\) and \(v\in D^\ast\nabla g(x)(u)\). Then
$
v=v_1+\mu Ru
$
for some \(v_1\in D^\ast\nabla\hat{f}(x)(u)\); therefore,
$
\langle u,v\rangle
=
\langle u,v_1\rangle
+\mu\langle u,Ru\rangle.
$
The first term is nonnegative by assumption
\eqref{eq:positive_definitness_exsistance_proposition}, while the second is
strictly positive because \(R\) is positive definite and \(u\neq0\). Hence,
$
\langle u,v\rangle>0,
$
showing that \(D^\ast\nabla g(x)\) is positive definite.

By Theorem 5.16 of \cite{mordukhovich2018variational}, the positive definiteness of
\(D^\ast\nabla g(x)\) implies that \(\nabla g\) is strongly locally maximally
monotone around \((x,\nabla g(x))\). Consequently, Theorem 5.13 of
\cite{mordukhovich2018variational} ensures that \(\nabla g\) is strongly metric
regular around this point, and the inverse mapping
$
(\nabla g)^{-1}:\mathbb{R}^n\rightrightarrows\mathbb{R}^n
$
admits a single-valued Lipschitz localization
$
\vartheta:V\to U
$
around \((\nabla g(x),x)\). Applying the scalarization formula gives
\[
D^\ast(\nabla g)^{-1}(\nabla g(x),x)(\nabla f(x))
=
D^\ast\vartheta(\nabla g(x))(\nabla f(x))
=
\partial\langle\nabla f(x),\vartheta\rangle(\nabla g(x)).
\]
Since \(\vartheta\) is locally Lipschitz, Theorem 1.22 of
\cite{mordukhovich2018variational} guarantees that $\partial \langle \nabla f(x),\vartheta \rangle(\nabla g(x))\neq \emptyset$.
Hence, we may choose
\[
-d\in
D^\ast(\nabla g)^{-1}(\nabla g(x),x)(\nabla f(x)).
\]
Using the definition of the limiting coderivative, it is straightforward to obtain
\[
-\nabla f(x)\in D^\ast\nabla g(x)(d),
\]
which, together with the coderivative sum rule, gives
\[
-\nabla f(x)
\in
D^\ast\nabla\hat{f}(x)(d)+\mu Rd.
\]

It remains to prove that \(d\neq0\). Suppose, to the contrary, that \(d=0\).
Since \(\nabla g\) is locally Lipschitz, Theorem 3.3 of
\cite{mordukhovich2018variational} implies that
\[
D^\ast\nabla g(x)(0)=\{0\}.
\]
Therefore,
\[
-\nabla f(x)\in D^\ast\nabla g(x)(0)=\{0\},
\]
which contradicts the assumption that \(\nabla f(x)\neq0\). Thus,
\(d\neq0\). Finally, we have
\[
-\nabla f(x)-\mu Rd
\in
D^\ast\nabla\hat{f}(x)(d).
\]
By the positive semidefiniteness of \(D^\ast\nabla\hat{f}(x)\),
\[
\langle d, -\nabla f(x)-\mu Rd\rangle\ge0,
\]
which implies
$
\langle-\nabla f(x),d\rangle
\ge
\mu\langle d,Rd\rangle.
$
Since \(R\) is positive definite, $\mu>0$, and \(d\neq0\),
$
\mu\langle d,Rd\rangle>0.
$
Hence,
$
\langle-\nabla f(x),d\rangle>0,
$
or equivalently,
\[
\langle\nabla f(x),d\rangle<0.
\]
Thus, \(d\) is a descent direction for \(f\).

\end{proof}

Proposition~\ref{prop:well-poseness-nonmonotone-Algorithm} establishes the well-posedness of the direction-finding step in Algorithm~\ref{alg:nomonotone-newton}. In particular, under the positive semidefiniteness assumption on the coderivative mapping, the inclusion \eqref{eq:direction_inclusion_nonmonotone} admits a nonzero solution at every nonstationary iterate. Moreover, the resulting direction is a descent direction for the objective function. Consequently, the search direction computed by the NCN algorithm is always well defined, allowing the HAN line search to be applied to determine a suitable step length.

It is worth noting that the formulation in terms of the auxiliary function ($\hat{f}$) provides a unified framework that accommodates both exact and approximate second-order information. In the proposed NCN algorithm, the exact second-order information is employed ($\hat{f}=f$). Nevertheless, the framework naturally extends to the case where ($\hat{f}$) is an approximation of (f), thereby yielding inexact variants while preserving the well-posedness of the search-direction computation under the assumptions of Proposition~\ref{prop:well-poseness-nonmonotone-Algorithm}.

\subsection{Adaptive Regularization Strategies}\label{sec:adaptive_regularization}

In this section, we present two adaptive regularization strategies employed by the NCN algorithm. In both strategies, the regularization parameter is updated dynamically based on information from the preceding iteration. The first strategy is motivated by the trust-region globalization framework, whereas the second exploits the step size returned by the line search procedure.

\subsubsection{Trust-region-based adaptive strategy}

This approach predicts the value of the objective function using the following local quadratic model of $f$ at $x_k$:

\begin{equation}\label{eq:predict_equation}
m_k(d)
:=
f(x_k)
+\langle \nabla f(x_k),d\rangle
+\frac{1}{2}\langle d,\omega_k\rangle
+\frac{\mu_k}{2}\langle d,Rd\rangle,
\end{equation}
where
$\omega_k\in D^\ast\nabla f(x_k)(d_k)$ and $R$ is a positive definite symmetric regularization matrix. Under the assumed positive semidefiniteness of the coderivative term, the inclusion \eqref{eq:direction_inclusion_nonmonotone} represents the first-order optimality condition associated with the local quadratic model \eqref{eq:predict_equation}. Hence, the resulting search direction $d_k$ is a minimizer of the regularized local model.

To evaluate the quality of the local quadratic model, we compare the actual reduction in the objective function with the reduction predicted by the model. Specifically, we define the ratio
\begin{equation}\label{eq:rho_definition}
\rho_k
:=
\frac{f(x_k)-f(x_k+d_k)}
{m_k(0)-m_k(d_k)}.
\end{equation}
Here, the numerator represents the actual reduction in the objective function, while the denominator represents the reduction predicted by the local model. Thus, $\rho_k$ quantifies the accuracy of the model in predicting the actual reduction. A value of $\rho_k$ close to one indicates good agreement between the actual and predicted reductions, whereas a small value of $\rho_k$ indicates that the model overestimates the reduction achieved by the computed step.

Since $d_k$ satisfies the first-order optimality condition associated with \eqref{eq:predict_equation}, there exists $\omega_k \in D^\ast \nabla f(x_k)(d_k)$ such that 
\[
-\nabla f(x_k)=\omega_k+\mu_k R_k d_k
\]
Consequently
\[
m_k(0)-m_k(d_k)
=
-\frac{1}{2}\left\langle\nabla f(x_k),d_k\right\rangle,
\]
and hence the ratio in \eqref{eq:rho_definition} can equivalently be written as
\begin{equation}\label{eq:rho_simplified}
\rho_k
=
\frac{f(x_k)-f(x_k+d_k)}
{-\frac{1}{2}\langle\nabla f(x_k),d_k\rangle}.
\end{equation}
A sufficiently large value of $\rho_k$ indicates good agreement between the actual and predicted reductions. In this case, the regularization parameter can be decreased. On the other hand, a small value of $\rho_k$ indicates a poor agreement between the model prediction and the actual reduction, motivating an increase in the regularization parameter. Accordingly, we update the regularization parameter as follows:
\begin{equation}\label{eq:Adaptive_reg_rule_TR}
\mu_{k+1}
=
\begin{cases}
\eta_1 q\|\nabla f(x_k)\|^p,
& \rho_k\ge T_2,\\[2mm]
q\|\nabla f(x_k)\|^p,
& T_1<\rho_k<T_2,\\[2mm]
\eta_2 q\|\nabla f(x_k)\|^p,
& \rho_k\le T_1,
\end{cases}
\end{equation}
where $q>0$, $p> 0$, and $0<\eta_1<1<\eta_2$, with $0<T_1<T_2$.

\subsubsection{line search-based adaptive strategy}

The second strategy uses the step size generated by the line search procedure as an indicator of the quality of the computed search direction. In particular, if the line search accepts the full step, i.e., $\alpha_k=1$, the direction is regarded as sufficiently reliable, and the regularization parameter is reduced. In contrast, a substantially shortened step indicates that the computed direction may be less suitable, in which case a larger regularization parameter is employed.

More precisely, the regularization parameter is updated according to
\begin{equation}\label{eq:Adaptive_reg_rule_linesearch}
\mu_{k+1}
=
\begin{cases}
\eta_1 q\|\nabla f(x_k)\|^p,
& \alpha_k=1,\\[2mm]
q\|\nabla f(x_k)\|^p,
& T_1<\alpha_k<1,\\[2mm]
\eta_2 q\|\nabla f(x_k)\|^p,
& \alpha_k\le T_1,
\end{cases}
\end{equation}
where $0<T_1<1$ and $0<\eta_1<1<\eta_2$.

For both strategies, the initial regularization parameter is selected as
\begin{equation}
\mu_1=q\|\nabla f(x_1)\|^p,
\end{equation}
and, for $k\geq1$, $\mu_{k+1}$ is subsequently determined by either the trust-region-based rule \eqref{eq:Adaptive_reg_rule_TR} or the line search-based rule \eqref{eq:Adaptive_reg_rule_linesearch}.

\section{Convergence Analysis }\label{sec:convergence_analysis}

In this section, we analyze the convergence properties of the proposed NCN algorithm, establishing both global convergence and local superlinear convergence. Although the NCN algorithm is formulated using exact second-order information, the analysis is carried out within a more general framework that also accommodates inexact second-order information. Consequently, the convergence results apply not only to the NCN algorithm but also to a broader class of Newton-type methods.

The following proposition establishes the global convergence of the coderivative-based Newton-type methods employing exact or inexact second-order information. The result is global in the sense that convergence is guaranteed for arbitrary initial iterates, without requiring the starting point to lie in a neighborhood of a minimizer. Furthermore, the analysis is sufficiently general to encompass semistochastic generalized Newton methods and other Newton-type algorithms based on approximate second-order information.

\vspace{0.1cm}
\begin{proposition}[\textbf{Global Convergence}]
\label{prop:global_convergent_nonmonotone}
In the setting of Proposition~\ref{prop:well-poseness-nonmonotone-Algorithm}, let \(x_0\in\mathbb{R}^n\) be the initial point, and define the level set
\[
\Omega:=\{x\in\mathbb{R}^n\mid f(x)\le f(x_0)\}.
\]
Assume that \(\Omega\) is bounded and that \(D^*(\nabla\hat{f})(x)\) is positive semidefinite on \(\Omega\) in the sense of \eqref{eq:positive_definitness_exsistance_proposition}. Then the sequence \(\{x_k\}\) generated by \eqref{eq:inclusion_existance_prop} satisfies
\[
\lim_{k\to\infty}\|\nabla f(x_k)\|=0.
\]
\end{proposition}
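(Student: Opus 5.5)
The plan is to run the standard argument for descent methods with nonmonotone line searches. There are four ingredients: a uniform descent bound for the directions, a uniform lower bound on the step sizes, a sufficient-decrease inequality, and boundedness of the iterates in $\Omega$.

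\textbf{Step 1: directional estimates.} By Proposition~\ref{prop:well-poseness-nonmonotone-Algorithm}, each $d_k$ satisfies
\[
-\langle\nabla f(x_k),d_k\rangle\ge\mu_k\langle d_k,Rd_k\rangle\ge \mu_k\lambda_{\min}(R)\|d_k\|^2 .
\]
Let $L$ be a Lipschitz constant of $\nabla \hat f$ on a neighborhood of $\Omega$; it exists since $\Omega$ is compact and $\hat f\in C^{1,1}$. By Theorem~1.44 of \cite{mordukhovich2018variational}, every $v\in D^*\nabla\hat f(x_k)(d_k)$ satisfies $\|v\|\le L\|d_k\|$. Writing $-\nabla f(x_k)=v_k+\mu_kRd_k$, we get
\[
\|\nabla f(x_k)\|\le (L+\mu_k\lambda_{\max}(R))\|d_k\| .
\]
The parameters $\mu_k$ lie between $\eta_1 q\|\nabla f(x_{k-1})\|^p$ and $\eta_2 q\|\nabla f(x_{k-1})\|^p$, so they are bounded above on $\Omega$. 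This yields $\|d_k\|\ge c_1\|\nabla f(x_k)\|$ and
\[
-\langle\nabla f(x_k),d_k\rangle\ge c_2\,\mu_k\|\nabla f(x_k)\|^2 .
\]
Because $\mu_k$ is tied to $\|\nabla f(x_{k-1})\|^p$, I will argue by contradiction: suppose $\|\nabla f(x_k)\|\ge\varepsilon$ along a subsequence. Then I need the lower bound $\mu_k\ge\eta_1 q\varepsilon'^p$ there, which requires $\|\nabla f(x_{k-1})\|$ to also be bounded away from zero. To get this I will use $\|x_k-x_{k-1}\|\to0$, established in Step~3, so that the gradients at $x_{k-1}$ and $x_k$ are close. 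I also need an upper bound $\|d_k\|\le\|\nabla f(x_k)\|/(\mu_k\lambda_{\min}(R))$, which again needs $\mu_k$ bounded below. I flag this interplay as delicate.

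\textbf{Step 2: step sizes.} Every phase of the HAN line search, whether Armijo, nonmonotone Armijo, or nonmonotone Wolfe, accepts $\alpha_k$ with
\[
f(x_{k+1})\le \max_{0\le j\le m(k)} f(x_{k-j})+c\,\alpha_k\langle\nabla f(x_k),d_k\rangle ,
\]
with memory $m(k)\le\max\{M_1,M_2\}$. The usual descent-lemma argument, using the $L$-Lipschitz gradient, shows that any backtracked step satisfies
\[
\alpha_k\ge \min\Big\{1,\ \tfrac{2\beta(1-c)}{L}\,\tfrac{-\langle\nabla f(x_k),d_k\rangle}{\|d_k\|^2}\Big\}\ge\bar\alpha>0 .
\]
The last inequality uses $-\langle\nabla f,d\rangle/\|d\|^2\ge\mu_k\lambda_{\min}(R)$ together with a lower bound on $\mu_k$. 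For Wolfe steps I will use the curvature condition instead, which gives the analogous bound.

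\textbf{Step 3: Grippo--Lampariello--Lucidi argument.} Define
\[
\ell(k):=\arg\max_{0\le j\le m(k)}f(x_{k-j}) .
\]
The sequence $f(x_{\ell(k)})$ is nonincreasing, so all iterates remain in $\Omega$, and it is bounded below on the compact set $\Omega$, hence convergent. By the standard induction over the memory window, the sufficient-decrease inequality gives
\[
\alpha_{k}\,|\langle\nabla f(x_{k}),d_{k}\rangle|\to0,
\]
first along $\ell(k)-1$ and then along all $k$. Uniform continuity of $f$ on $\Omega$ is used here, and this step also yields $\alpha_k\|d_k\|\to0$. Combining this with Steps~1 and~2 on the contradiction subsequence gives $\bar\alpha c_2\mu_k\varepsilon^2\to0$, which is impossible. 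Hence $\|\nabla f(x_k)\|\to0$.

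\textbf{Main obstacle.} I expect the hardest part to be the lower bound on $\mu_k$. Its coupling to the previous gradient, together with the adaptive factors $\eta_1,\eta_2$, must be controlled without circularity. I would first try to prove $\|x_{k+1}-x_k\|\to0$ independently of the lower bound, using $\alpha_k\|d_k\|^2\le \alpha_k|\langle\nabla f(x_k),d_k\rangle|/(\mu_k\lambda_{\min}(R))$ only on the bad subsequence. If that fails, I would fall back to an argument about the limit points of that subsequence.
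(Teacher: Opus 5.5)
\textbf{Verdict: genuine gap.} Your outline has the same structure as the paper's proof. That proof verifies ($\mathcal{B}1$)--($\mathcal{B}4$) of Lemma~\ref{lemma:HAN_convergence}: descent, the bound $-\langle\nabla f(x_k),d_k\rangle\ge\mu_k\lambda_{\min}(R)\|d_k\|^2$, a bounded-direction estimate, and the nonmonotone decrease from HAN. It then invokes the lemma, whose proof is a compressed version of your Steps~2--3.

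Your proposal stalls exactly where you flag it. You read $\mu_k$ as a multiple of $\|\nabla f(x_{k-1})\|^p$, and you want $\|x_k-x_{k-1}\|\to0$ in order to transfer $\|\nabla f(x_k)\|\ge\varepsilon$ back to $x_{k-1}$. But your only route to $\alpha_k\|d_k\|\to0$ is $\alpha_k\|d_k\|^2\le\alpha_k|\langle\nabla f(x_k),d_k\rangle|/(\mu_k\lambda_{\min}(R))$. That needs a lower bound on $\mu_{k_j-1}$, i.e.\ on $\|\nabla f(x_{k_j-2})\|$, about which you know nothing. The Grippo--Lampariello--Lucidi induction also needs it along the indices $\ell(k)-j$, not just on your bad subsequence. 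So the repair is circular.

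Without such a bound your constants are not uniform. $\|d_k\|$ can be of order $\|\nabla f(x_k)\|/\mu_k$, for instance when $\nabla f(x_k)$ lies in the kernel of a singular generalized Hessian and $R=I$. Then both the Lipschitz constant along trial segments and $\bar\alpha$ degenerate.

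The missing point is that the paper takes $\mu_k=\eta q\|\nabla f(x_k)\|^p$ with $\eta\in\{\eta_1,1,\eta_2\}$, a multiple of the \emph{current} gradient norm. This is how the direction step of Algorithm~\ref{alg:nomonotone-newton} and both convergence proofs use it; the index shift in \eqref{eq:Adaptive_reg_rule_TR}--\eqref{eq:Adaptive_reg_rule_linesearch} is what suggested your reading. With this convention, at any index with $\|\nabla f(x_k)\|>\varepsilon$ you get $\mu_k\ge\eta_1 q\varepsilon^p$ outright. That gives ($\mathcal{B}3$) as $\|d_k\|\le\|\nabla f(x_k)\|/(\eta_1 q\varepsilon^p\lambda_{\min}(R))$ and gives your $\bar\alpha$, with no need for $\|x_k-x_{k-1}\|\to0$.

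Under that convention, two of your ingredients are worth keeping, because Lemma~\ref{lemma:HAN_convergence} passes over them:
\begin{itemize}
\item The lower bound $\|d_k\|\ge\|\nabla f(x_k)\|/(L+\mu_k\lambda_{\max}(R))$ from the Lipschitz bound on the coderivative. The lemma claims $\|d_k\|$ stays away from zero by ($\mathcal{B}1$) alone, which does not follow.
\item The GLL induction. The lemma deduces $f(x_k)\to\lim_k F_k$ merely from $f(x_k)\le F_k$, which is not enough.
\end{itemize}
In that induction you still must justify $\alpha_k\|d_k\|\to0$ at indices where $\nabla f(x_k)$ is small. There $\mu_k\to0$ and your inequality degenerates. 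The estimate $\|d_k\|\le\|\nabla f(x_k)\|^{1-p}/(\eta_1 q\lambda_{\min}(R))$, together with the boundedness of HAN step sizes (at most $\gamma^S$), handles this for $p<1$. For $p=1$ it only bounds $d_k$, and a further argument is needed.
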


\begin{proof}
In the first part of the proof, we show that the direction \( d_k \), defined by \eqref{eq:inclusion_existance_prop}, satisfies conditions ($\mathcal{B}$\ref{condition:decent_direction}), ($\mathcal{B}$\ref{condition:Angle_conditon}), and ($\mathcal{B}$\ref{condition:bounded_direction}) of Lemma~\ref{lemma:HAN_convergence}. Condition ($\mathcal{B}$\ref{condition:decent_direction}) follows directly from Proposition~\ref{prop:well-poseness-nonmonotone-Algorithm}. Since 
\[
-\nabla f(x_k) \in D^\ast \nabla f(x_k)(d_k) + \mu_k R_k d_k,
\]
we can rewrite this inclusion as $
-\nabla f(x_k) - \mu_k R_k d_k \in D^\ast \nabla f(x_k)(d_k).
$
Using the positive semidefiniteness of \(\nabla f(x_k)\), it follows that $
\langle -\nabla f(x_k) - \mu_k R_k d_k,\, d_k \rangle \ge 0,
$
which yields
\begin{equation}\label{eq:global_convergece_nonmonotone_algorithm_A1}
\langle -\nabla f(x_k), d_k \rangle \ge \mu_k \langle R_k d_k,d_k \rangle \ge \mu_k \lambda_{\min(R_k)}\|d_k\|^2=t \|\nabla f(x_k)\| ^p\|d_k\|^2,
\end{equation}
where $t=\lambda_{\min(R_k)} \eta q$, and $\eta$ could be 1, $\eta_1$, or $\eta_2$ based on adaptive regularization in (\ref{eq:Adaptive_reg_rule_TR}) and (\ref{eq:Adaptive_reg_rule_linesearch}). This means that $d_k$ satisfies condition ($\mathcal{B}$\ref{condition:Angle_conditon}) with $\gamma=p$ and $\zeta=2$.
Next, from \eqref{eq:global_convergece_nonmonotone_algorithm_A1}, we deduce
\begin{equation}\label{eq:global_convergece_nonmonotone_algorithm_A3}
\|\nabla f(x_k)\| \, \|d_k\| 
\ge \langle -\nabla f(x_k), d_k \rangle 
\ge \mu_k \lambda_{\min(R_k)}\|d_k\|^2,
\end{equation}
which implies $
\mu_k \lambda_{\min(R_k)} \|d_k\| \le \|\nabla f(x_k)\|.$ Thus, under assumption $\|\nabla f(x_k)\|>\epsilon$ for some positive value $\epsilon$, we obtain $\|d_k\| \le \frac{1}{\mu_k \lambda_{\min(R_k)}} \|\nabla f(x_k)\| \le \frac{1}{t\epsilon^p}\|\nabla f(x_k)\|$. This indicate that condition ($\mathcal{B}$\ref{condition:bounded_direction}) is satisfied by $d_k$.

Finally, it is clear that the condition ($\mathcal{B}$\ref{condition:Nonmonotone_linesearch}) of Lemma \ref{lemma:HAN_convergence} is satisfied by Algorithm \ref{alg:nomonotone-newton} since $\alpha_k$ is chosen based on the HAN line search. Therefore, it is straightforward to prove $
\lim_{k \to \infty} \|\nabla f(x_k)\| = 0
$ employing Lemma \ref{lemma:HAN_convergence}.
    
\end{proof}

The significance of Proposition~\ref{prop:global_convergent_nonmonotone} is twofold. First, it establishes global convergence of the proposed NCN algorithm from arbitrary initial points under standard bounded level-set and positive-semidefiniteness assumptions. Second, the result is independent of the particular choice of second-order information. In particular, the convergence analysis remains valid when the exact generalized Hessian is replaced by any approximation whose coderivative satisfies the positive semidefiniteness condition \eqref{eq:positive_definitness_exsistance_proposition}. 

A principal advantage of second-order methods over first-order methods is their superior local convergence behavior. While first-order methods typically achieve at most linear convergence, Newton-type methods can attain superlinear or even quadratic convergence once the iterates enter a neighborhood of a solution, provided suitable regularity conditions are satisfied. This fast local convergence is one of the main reasons for the widespread use of second-order methods.

The following proposition provides a unified framework for analyzing the local convergence of the proposed NCN algorithm. The framework is designed to handle both exact and inexact second-order information, covering a broad range of generalized Newton-type methods. In particular, the analysis applies to deterministic methods as well as semistochastic and inexact variants, providing a common basis for studying their local convergence.

\begin{proposition}[\textbf{\color{black}Superlinear local convergence rate}]\label{prop:superlinear_convergent_rate}  In the setting of Propositon~\ref{prop:well-poseness-nonmonotone-Algorithm}, 
\deleted{ Let \( f : \mathbb{R}^n \to \mathbb{R} \) be of \( C^{1,1} \) function and} let \( \bar{x} \) be an accumulation point of a sequence \( \{x_k\} \). Assume that \( \nabla f \) is semismooth\(^*\) at $\bar{x}$ and metrically regular in a neighborhood of \( \bar{x} \). In addition, suppose that the approximated generalized hessian $D^\ast \nabla \hat{f}(x_k)$ is positive semidefinite on $\Omega$ and satisfy the following condition
\begin{equation}\label{eq:superlinear_inexact_condition}
\sup_{\|v\|=1} d_H\Big(D^\ast \nabla \hat{f}(x_k)(v),D^\ast \nabla f(x_k)(v)\Big )=o(\|x_k-\bar{x}\|).
\end{equation}
Then \( \bar{x} \) is a tilt-stable local minimizer of \( f \). Furthermore, if at least one of the following conditions holds:
\begin{enumerate}
\item[i)] \( \nabla f \) is directionally differentiable at \( \bar{x} \);
\item[ii)] \( c \in \left(0, \frac{1}{4l\kappa}\right) \) and $c^\prime \in (1-\frac{\kappa}{l},1)$, where \( \kappa > 0 \) and \( l > 0 \) are the moduli of metric regularity and Lipschitz continuity of \( \nabla f \) around \( \bar{x} \),
\end{enumerate}
then the sequence \( \{x_k\} \) converges to \( \bar{x} \) with Q-superlinear rate. 

\end{proposition}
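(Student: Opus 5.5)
First I would show that $\bar x$ is a tilt-stable local minimizer. By Proposition~\ref{prop:global_convergent_nonmonotone}, $\|\nabla f(x_k)\|\to0$, so by continuity $\nabla f(\bar x)=0$. The sequence $f(x_k)$ is bounded by the nonmonotone reference values and $x_k\in\Omega$. Since $D^\ast\nabla\hat f(x_k)$ is positive semidefinite, condition \eqref{eq:superlinear_inexact_condition} transfers this to $D^\ast\nabla f(x_k)$ up to an $o(\|x_k-\bar x\|)$ error. Passing to the limit along the subsequence, using the closedness of the limiting coderivative graph, gives $\langle v,u\rangle\ge0$ for all $v\in D^\ast\nabla f(\bar x)(u)$. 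Metric regularity of $\nabla f$ around $\bar x$ gives, via the Mordukhovich criterion, that $\ker D^\ast\nabla f(\bar x)=\{0\}$. Together with the fact that $\nabla f$ is single-valued and Lipschitz, this upgrades positive semidefiniteness to positive definiteness of $D^\ast\nabla f(\bar x)$. Here I would invoke the known characterization that, for $C^{1,1}$ functions, tilt stability of a stationary point is equivalent to positive definiteness of the generalized Hessian (Mordukhovich--Rockafellar / \cite{MordukhovichNghia2015}). This yields tilt stability with some modulus $\kappa$, and hence the second-order growth \eqref{eq:second_ordr_condition_of_the_tilt_stable_minimizer} from Proposition~\ref{prop:tilt_stability_second_order_growth}.

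Next I would prove that the whole sequence converges to $\bar x$. Tilt stability makes $\bar x$ an isolated stationary point, and $\nabla f(x_k)\to0$. Using $\|d_k\|\le\|\nabla f(x_k)\|/(\mu_k\lambda_{\min})$ I can control the step length. The regularizer $\mu_k\sim\|\nabla f(x_k)\|^p$ with $p\le1$ only gives $\|d_k\|\lesssim\|\nabla f(x_k)\|^{1-p}$. Near $\bar x$, however, the positive definiteness of $D^\ast\nabla f$ on a neighborhood (by robustness of tilt stability) gives the uniform bound $\|d_k\|\le\kappa'\|\nabla f(x_k)\|$. A standard Moré--Sorensen-type argument for isolated accumulation points, with $\|x_{k+1}-x_k\|\to0$, then gives $x_k\to\bar x$.

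Then comes the local superlinear estimate. Write $-\nabla f(x_k)-\mu_kR_kd_k=\hat v_k\in D^\ast\nabla\hat f(x_k)(d_k)$. By \eqref{eq:superlinear_inexact_condition}, scaled by $\|d_k\|$ (the coderivative is positively homogeneous), there is $v_k\in D^\ast\nabla f(x_k)(d_k)$ with $\|v_k-\hat v_k\|=o(\|x_k-\bar x\|)\|d_k\|$. The semismooth$^\ast$ property of $\nabla f$ at $\bar x$, in the form used by Gfrerer--Outrata and \cite{khanh2024globally}, gives, for $u\in D^\ast\nabla f(x_k)(w)$, the estimate $|\langle u,x_k-\bar x\rangle-\langle w,\nabla f(x_k)-\nabla f(\bar x)\rangle|=o(\|x_k-\bar x\|)(\|u\|+\|w\|)$. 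Combining this with the uniform positive definiteness (modulus $1/\kappa$) and $\mu_k=O(\|\nabla f(x_k)\|^p)=O(\|x_k-\bar x\|^p)\to0$, I would derive $\|x_k+d_k-\bar x\|=o(\|x_k-\bar x\|)$. The argument mirrors the exact case, with the extra perturbation terms being $o(\|x_k-\bar x\|)$.

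Finally, I would show that the unit step is eventually accepted by the HAN line search; this is the step I expect to be the main obstacle. The first stage of HAN is monotone Armijo starting at $\alpha=1>C_1$, so it suffices to show $f(x_k+d_k)\le f(x_k)+c\langle\nabla f(x_k),d_k\rangle$ for large $k$. Under (i), directional differentiability of $\nabla f$ combined with semismooth$^\ast$ gives a second-order expansion of $f$ along $d_k$. With $x_k+d_k$ superlinearly close to $\bar x$, this yields the Armijo inequality for any $c<1/2$. Under (ii), I would avoid expansions and instead use the growth condition (modulus $\kappa$) and the Lipschitz modulus $l$. These give $f(x_k)-f(\bar x)\ge\frac{1}{2\kappa}\|x_k-\bar x\|^2$, $f(x_k+d_k)-f(\bar x)\le\frac l2\|x_k+d_k-\bar x\|^2=o(\|x_k-\bar x\|^2)$, and $|\langle\nabla f(x_k),d_k\rangle|\le l\|x_k-\bar x\|\,\|d_k\|\approx l\|x_k-\bar x\|^2$. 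The restriction $c<1/(4l\kappa)$ is exactly what makes Armijo hold. The bound on $c'$ guarantees that, if the Wolfe stage is ever reached, the curvature condition is satisfied at $\alpha=1$, so the full step is not rejected. Once $\alpha_k=1$ eventually, the previous paragraph gives Q-superlinear convergence.
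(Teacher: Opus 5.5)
Your third step carries the superlinear rate, and it does not go through as described. The semismooth$^\ast$ inequality you quote is scalar. Applied to the pair $(v_k,d_k)$ you have, with $v_k\approx-\nabla f(x_k)-\mu_kR_kd_k$, it yields only $\langle\nabla f(x_k),e_k\rangle=o(\|x_k-\bar x\|^2)$ for the error $e_k:=x_k+d_k-\bar x$. That controls one component of $e_k$. Meanwhile $\langle v_k,d_k\rangle\ge\kappa^{-1}\|d_k\|^2$ only bounds $\|d_k\|$, so for $n\ge2$ you get no estimate of $\|e_k\|$.

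The paper, like the exact-case proof you want to mirror, proceeds in two moves. It first splits $D^\ast\nabla f(x_k)(d_k)\subset D^\ast\nabla f(x_k)(e_k)+D^\ast\nabla f(x_k)(\bar x-x_k)$ (Lemma~5.6 of \cite{duy2023generalized}). Uniform positive definiteness on the first piece then gives $\|e_k\|\le\kappa\bigl(\|\nabla f(x_k)+w_k\|+\mu_k\|R_kd_k\|+\|v_k-\hat v_k\|\bigr)$ with $w_k\in D^\ast\nabla f(x_k)(\bar x-x_k)$. It then uses the \emph{vector} estimate $\|\nabla f(x_k)+w_k\|=o(\|x_k-\bar x\|)$. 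Your scalar form cannot supply that either, since it only controls $\langle\nabla f(x_k)+w_k,x_k-\bar x\rangle$, which is the wrong pairing.

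What you need is the characterization of semismooth$^\ast$ for single-valued Lipschitz maps \cite{gfrerer2021semismooth}: $\|\nabla f(x)-\nabla f(\bar x)-A(x-\bar x)\|=o(\|x-\bar x\|)$ uniformly in $A\in\operatorname{co}\overline{\nabla}(\nabla f)(x)$. Combine it with \eqref{eq:relatioship_coderivative_clark}, which writes every element of $D^\ast\nabla f(x)(w)$ as $Aw$ for such a symmetric $A$. With this you can even skip the splitting. Writing $v_k=A_kd_k$ gives $A_ke_k=-\mu_kR_kd_k+(v_k-\hat v_k)+o(\|x_k-\bar x\|)$, and $\|A_k^{-1}\|\le\kappa$ because the positive-definiteness inequality is preserved under convex hulls.

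There are three further caveats.

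\emph{Positive semidefiniteness at $\bar x$.} In your first step, closedness of $\operatorname{gph}D^\ast\nabla f$ gives only outer semicontinuity: limits of elements of $D^\ast\nabla f(x_k)(u)$ lie in $D^\ast\nabla f(\bar x)(u)$. But positive semidefiniteness must hold for \emph{every} element of $D^\ast\nabla f(\bar x)(u)$, and these need not be limits along your sequence. The paper's Claim~1 has the same flaw. For $\hat f=f$ it is harmless since $\bar x\in\Omega$, but the inexact case needs an assumption at $\bar x$.

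\emph{The modulus in (ii).} Condition (ii) is stated with the metric-regularity modulus $\kappa$, so the growth constant $1/(2\kappa)$ you use there must carry that same $\kappa$. The paper obtains this from Theorem~4.13 of \cite{drusvyatskiy2014second}, while your positive-definiteness route yields only some tilt modulus.

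\emph{Curvature under (i).} You should also verify the curvature condition at $\alpha=1$ under (i), since it is tested there when $C_2=1$. It holds for any $c'$, because $\|\nabla f(x_k+d_k)\|\le l\|e_k\|=o(\|d_k\|)$ while $-\langle\nabla f(x_k),d_k\rangle\ge\kappa^{-1}\|d_k\|^2+o(\|d_k\|^2)$.

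The rest is sound and in places sharper than the paper. The paper simply assumes $x_k\to\bar x$, whereas your isolated-stationary-point/Mor\'e--Sorensen argument proves it. Your treatment of (ii), via quadratic growth at $\bar x$ plus the descent lemma, is simpler than the paper's use of uniform growth at $(x_{k+1},\nabla f(x_{k+1}))$. It also checks the \emph{monotone} Armijo inequality that the first HAN stage actually tests at $\alpha=1$, and it works for every $c<1/(2l\kappa)$.
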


\begin{proof} For clarity, we divide the proof into several claims and combine them to establish the desired result.\\
\textbf{Claim 1.} $D^\ast\nabla f(\bar{x})$ \textit{is positive semidefinite}. Under condition~(\ref{eq:superlinear_inexact_condition}) and the positive homogeneity of the coderivative, for every $u\in D^\ast\nabla f(\bar{x})(v)$, there exists a sequence $\{u_k\}$ with
$u_k\in D^\ast\nabla\hat{f}(x_k)(v)$ such that
\begin{equation}\label{eq:limit_claim_1_local_superlinear}
\lim_{k\to\infty}\langle u_k,v\rangle
=\langle u,v\rangle,
\qquad v\in\mathbb{R}^n.
\end{equation}
Since $D^\ast\nabla\hat{f}(x_k)$ is positive semidefinite, we have $\langle u_k,v\rangle \geq 0.$ Therefore, by~(\ref{eq:limit_claim_1_local_superlinear}), it follows that $\langle u,v\rangle \geq 0,$ which implies that $D^\ast\nabla f(\bar{x})$ is positive semidefinite.
\\
\textbf{Claim 2.}  \textit{$\bar{x}$ is a tilt-stable local minimizer.} By Proposition~\ref{prop:global_convergent_nonmonotone}, $\bar{x}$ is a stationary point and $\bar{x}\in\Omega$. In addition, Claim 1 shows that the coderivative $D^\ast\nabla f(\bar{x})$ is positive semidefinite. Together with the metric regularity of $\nabla f$ around $\bar{x}$ with modulus $\kappa$, these properties allow us to apply Theorem~4.13 of \cite{drusvyatskiy2014second}, which implies that $\bar{x}$ is a tilt-stable local minimizer of $f$ with modulus $\kappa$.\\
\textbf{Claim 3.} \textit{The following equality holds}:
 \begin{equation}\label{eq:nonmonotone_superlinear_relationship}
    \|x_k + d_k - \bar{x}\| = o(\|x_k - \bar{x}\|) \quad \text{as } k \to \infty.
\end{equation}  
Based on Lemma 5.6 in \cite{duy2023generalized}, we have 
\begin{equation}
    D^\ast \nabla f(x_k)(d_k) \subset D^\ast \nabla f(x_k)(x_k+d_k-\bar{x})+D^\ast \nabla f(x_k)(\bar{x}-x_k).
\end{equation}
Using \eqref{eq:inclusion_existance_prop}, it follows that
\begin{equation}
 -\nabla f(x_k) - \mu_k R_k d_k \in D^\ast \nabla \hat{f}(x_k)(d_k).   
\end{equation}
Therefore, using (\ref{eq:superlinear_inexact_condition}), there exist $\epsilon_k\in \mathbb{R}^n$ such that $\|\epsilon_k\|=o(\|x_k-\bar{x}\|)$ and 
\begin{equation}\label{eq:superlinear_convergence_nonmonotone_algoithm_equation1}
     -\nabla f(x_k) - \mu_k R_k d_k +\epsilon_k\in D^\ast \nabla f(x_k)(d_k).  
\end{equation}
Also, there exists $v_k \in D^\ast \nabla f(x_k)(\bar{x} - x_k)$ such that
\begin{equation}\label{eq:superlinear_convergence_nonmonotone_algoithm_equation2}
    -\nabla f(x_k) - \mu_k R_k d_k - v_k  +\epsilon_k\in D^\ast \nabla f(x_k)(x_k + d_k - \bar{x}).
\end{equation}
Since $\bar{x}$ is a tilt-stable minimizer, its characterization via the combined second-order subdifferential—see Theorem 3.5 in \cite{MordukhovichNghia2015} and Proposition 4.6 in \cite{chieu2017second}—yields the existence of $\delta > 0$ such that, for all $x \in \mathbb{B}_{\delta}(\bar{x})$, the coderivative $D^\ast \nabla f(x)$ is positive definite in the following sense:
\begin{equation}\label{eq:titl-stable-second-order-characterization}
    \langle z, \omega \rangle \ge \frac{1}{\kappa} \| \omega \|^2 
    \quad \text{for all } z \in D^\ast \nabla f(x)(\omega),\; x \in \mathbb{B}_\delta(\bar{x}),\; \text{and } \omega \in \mathbb{R}^n.
\end{equation}

Therefore, for all sufficiently large $k \in \mathbb{N}$ such that $x_k \in \mathbb{B}_{\delta}(\bar{x})$, we can apply \eqref{eq:titl-stable-second-order-characterization} to (\ref{eq:superlinear_convergence_nonmonotone_algoithm_equation1}) and (\ref{eq:superlinear_convergence_nonmonotone_algoithm_equation2}), which yields:

\begin{equation}\label{eq:super_linear_convergence_nonmonotone_direction_estimate}
    \begin{split}
        & \langle- \nabla f(x_k) - \mu _kR_k d_k+\epsilon_k,d_k \rangle \ge \frac{1}{\kappa}\|d_k\|^2\\ &\Rightarrow \langle \nabla f(x_k), -d_k \rangle - \mu _k \|d_k\|_{R_k}^2 +\langle \epsilon_k,d_k\rangle \ge \frac{1}{\kappa}\|d_k\|^2\\
        & \Rightarrow \langle \nabla f(x_k), -d_k \rangle +\langle\epsilon_k,d_k \rangle \ge \kappa^{-1} \|d_k\|^2+\mu_k\|d_k\|_{R_k}^2\\ 
        & \ge (\kappa^{-1}+\mu_k \lambda_{\min(R_k)})\|d_k\|^2 \ge \kappa^{-1}\|d_k\|^2,
    \end{split}
\end{equation}
and
\begin{equation}\label{eq:super_linear_convergence_nonmonotone_error_estimate}
  \langle- \nabla f(x_k)- \mu_k R_k d_k - v_k+\epsilon_k, x_k+d_k-\bar{x} \rangle \ge \frac{1}{\kappa} \|x_k+d_k-\bar{x}\|^2.
\end{equation}
Using the Cauchy–Schwarz inequality, $\nabla f(\bar{x}) = 0$, Lipschitz continuity of $\nabla f$, and (\ref{eq:super_linear_convergence_nonmonotone_direction_estimate}), it follows that:
\begin{equation}\label{eq:super_linear_convergence_nonmonotone_direction_estimate2}
\begin{split}
    &   \frac{1}{\kappa}\|d_k\|^2 \le \langle \nabla f(x_k), -d_k \rangle +\langle \epsilon_k,d_k \rangle \le \|d_k\| \|\nabla f(x_k)\|+\|\epsilon_k\|\|d_k\| \Rightarrow \\
    &\|d_k\| \le \kappa \| \nabla f(x_k) \| +\kappa \|\epsilon_k\|= \kappa \|\nabla f(x_k)-\nabla f(\bar{x})\| +\kappa \|\epsilon_k\|\le \kappa l \|x_k-\bar{x}\|+\kappa \|\epsilon_k\|.\\
\end{split}
\end{equation}
Also, employing (\ref{eq:super_linear_convergence_nonmonotone_error_estimate}) and Cauchy-Schwarz inequality, it is obtained:

\begin{equation}\label{eq:super_linear_convergence_nonmonotone_error_estimate2}
\begin{split}
    & \frac{1}{\kappa}\|x_k+d_k-\bar{x}\|^2 \le \langle - \nabla f(x_k)-\mu_kR_kd_k-v_k+\epsilon_k,x_k+d_k-\bar{x}\rangle \\
    & \le \|-\nabla f(x_k)-\mu_kR_kd_k-v_k+\epsilon_k\|\|x_k+d_k-\bar{x}\| \Rightarrow \\
    & \|x_k+d_k-\bar{x}\| \le \kappa \|\nabla f(x_k)+v_k+\mu_k R_kd_k-\epsilon_k\| \\
    &\le \kappa(\|\nabla f(x_k)+v_k\|+\mu_k\|R_kd_k\|+\|\epsilon_k\|) \\
    &\le \kappa(\|\nabla f(x_k)+v_k\|+\mu_k \lambda_{\max(R_k)}\|d_k\|+\|\epsilon_k\|).
\end{split}
\end{equation}
The following inequality follows from the Lipschitz continuity of $\nabla f$, together with $\nabla f(\bar{x}) = 0$, and $\mu_k = \eta q\|\nabla f(x_k)\|^p$ (where $\eta$ is a coeffiecnt determined by (\ref{eq:Adaptive_reg_rule_TR}) or (\ref{eq:Adaptive_reg_rule_linesearch})):
\begin{equation}\label{eq:eq:super_linear_convergence_nonmonotone_mu_k_estimate}
    \mu_k=\eta q \|\nabla f(x_k)\|^p=\eta q \|\nabla f(x_k)-\nabla f(\bar{x})\|^p\le \eta ql \|x_k-\bar{x}\|^p.
\end{equation}
Using (\ref{eq:super_linear_convergence_nonmonotone_direction_estimate2}), (\ref{eq:super_linear_convergence_nonmonotone_error_estimate2}), and (\ref{eq:eq:super_linear_convergence_nonmonotone_mu_k_estimate}):
\begin{equation}\label{eq:super_linear_convergence_nonmonotone_error_estimate3}
    \|x_k+d_k-\bar{x}\| \le \kappa \|\nabla f(x_k)+v_k\|+\kappa^2 l \lambda_{\max(R_k)} \eta q \Big( l\|x_k-\bar{x}\|^{1+p}+ \|x_k-\bar{x}\|^p\|\epsilon_k\|\Big)+\kappa \|\epsilon_k\|.
\end{equation}
By the semismoothness* of $\nabla f$ at $\bar{x}$ and the fact that $\nabla f(\bar{x}) = 0$, it holds that
\begin{equation}\label{eq:super_linear_convergence_nonmonotone_final_estimate}
\|\nabla f(x_k)+v_k\| = \|\nabla f(x_k) - \nabla f(\bar{x}) + v_k\| = o(\|x_k - \bar{x}\|).
\end{equation}
Consequently, by (\ref{eq:super_linear_convergence_nonmonotone_error_estimate3}) and (\ref{eq:super_linear_convergence_nonmonotone_final_estimate}), it follows that
\begin{equation}\label{eq:superlineae_local_rate_last_equation}
\|x_k + d_k - \bar{x}\| = o(\|x_k - \bar{x}\|).
\end{equation}

In the second part of the proof, we establish that the algorithm eventually accepts the full Newton step at every iteration. More precisely, there exists an index $\bar{k} \in \mathbb{N}$ such that $\alpha_k = 1$ for all $k > \bar{k}$. To this end, we establish the two following claims.\\
\textbf{Claim 4. }\textit{the unit step size is accepted under assumption (i) for sufficiently large $k$.} Since $\nabla f$ is semismooth$^*$, it is also semismooth at $\bar{x}$. By \eqref{eq:angle_estimation_with_direction_norm} and Proposition~\ref{prop:unit_stepsize_semismooth}, condition~\eqref{eq:nonmonotone-condition-HAN} is satisfied. Moreover, Proposition~\ref{prop:super_linear_local_rate_curvuture_condition} guarantees that condition~\eqref{eq:wolf_condition_HAN} also holds. Therefore, the unit step size is eventually accepted by the HAN line search.\\
\textbf{Claim 5. }\textit{the unit step size is accepted under assumption (ii) for sufficiently large $k$.} We begin with the following inequality, obtained from the relation $\|d_k\| = \|x_{k+1} - x_k\| = \|x_{k+1} - \bar{x} + \bar{x} - x_k\|$:
\begin{equation}
    | \frac{\|x_k-\bar{x}\|}{\|x_k-\bar{x}\|}-\frac{\|x_{k+1}-\bar{x}\|}{\|x_k-\bar{x}\|}|\le \frac{\|d_k\|}{\|x_k-\bar{x}\|}\le \frac{\|x_{k+1}-\bar{x}\|}{\|x_k-\bar{x}\|}+\frac{\|x_k-\bar{x}\|}{\|x_k-\bar{x}\|}.
\end{equation}
Consequently,
\begin{equation}\label{eq:super_linear_convergence_nonmonotone_direction_error_limit_1}
  \lim_{k \to \infty} \frac{\|d_k\|}{\|x_k - \bar{x}\|} = 1  
\end{equation}
which indicates $
\lim_{k \to \infty} 
\frac{\|x_{k+1} - x_k\| / \|x_k - \bar{x}\|}{\|d_k\| / \|x_k - \bar{x}\|} = 0
$ as well as
\begin{equation}
 \|x_{k+1} - \bar{x}\| = o(\|d_k\|).   
\end{equation}
Using (\ref{eq:super_linear_convergence_nonmonotone_direction_error_limit_1}), for sufficient large $k$ we have:
\begin{equation}
    |\frac{\|x_k-\bar{x}\|}{\|d_k\|}-1|\le \frac{1}{4l\kappa}.
\end{equation}
 Without loss of generality, consider Proposition~\ref{prop:tilt_stability_second_order_growth} and assume that there exists $\delta>0$ such that $
\mathbb{B}_\delta(\bar{x}) \times \mathbb{B}_{l\delta}(\nabla f(\bar{x})) \subset U \times V$.
Since $x_k \to \bar{x}$, it follows that $(x_k, \nabla f(x_k)) \to (\bar{x}, \nabla f(\bar{x}))$, and hence, for all sufficiently large $k$,
\[
(x_k, \nabla f(x_k)) \in \mathbb{B}_\delta(\bar{x}) \times \mathbb{B}_{l\delta}(\nabla f(\bar{x})).
\]
Therefore, the second-order growth condition applied at $(x_{k+1}, \nabla f(x_{k+1}))$ that yields
\begin{equation}\label{eq:nonmonotone_superlinear_second_order_growth_estimation}
    f(x_k) \ge f(x_{k+1}) + \langle \nabla f(x_{k+1}), x_k - x_{k+1} \rangle + \frac{1}{2\kappa}\|x_k - x_{k+1}\|^2.
\end{equation}
Note that in the HAN line search $F_k:=\max_{j=0,1,\dots,M} f(x_{k-j})$ which indicates $F_k\ge f(x_k)$. Using (\ref{eq:nonmonotone_superlinear_second_order_growth_estimation}) and (\ref{eq:nonmonotone-condition-HAN}), we obtain:  
\begin{equation}
\begin{split}
    &  f(x_{k+1)}-F_k-c \langle\nabla f(x_k),d_k \rangle \le \langle \nabla f(x_{k+1}) ,d_k \rangle-\frac{1}{2\kappa}\|d_k\|^2-c \langle \nabla f(x_k),d_k \rangle \\
    & \le \|\nabla f(x_{k+1})\|.\|d_k\|-\frac{1}{2\kappa} \|d_k\|^2+c \|\nabla f(x_k)\|\|d_k\|\\
    & \le l \|x_{k+1}-\bar{x}\|.\|d_k\|-\frac{1}{2 \kappa} \|d_k\|^2+cl\|x_k-\bar{x}\|\|d_k\|\\
    &=\|d_k\|^2 \Bigg( (\frac{o(\|d_k\|^2)}{\|d_k\|^2})+l(c\frac{\|x_k-\bar{x}\|}{\|d_k\|}-\frac{1}{2l\kappa}) \Bigg)
\end{split} 
\end{equation}
To complete the proof, it is sufficient to show that $c\frac{\|x_k-\bar{x}\|}{\|d_k\|}-\frac{1}{2l\kappa}<0$. To do this we have:
\begin{equation}
    c\frac{\|x_k-\bar{x}\|}{\|d_k\|}-\frac{1}{2l\kappa}<\frac{c}{4l\kappa}+c-\frac{1}{2l\kappa}\le \frac{1}{4l \kappa}+c-\frac{1}{2l\kappa}=c-\frac{1}{4l\kappa}<0.
\end{equation}
As, $k\to \infty$, 
\[
\Bigg( (\frac{o(\|d_k\|^2)}{\|d_k\|^2})+l(c\frac{\|x_k-\bar{x}\|}{\|d_k\|}-\frac{1}{2l\kappa}) \Bigg)\to A<0.
\]
So, there exists $\bar{k}\in \mathbb{N}$ such that for $k>\bar{k}$, we have:
\[
f(x_{k+1)}-F_k-c \langle\nabla f(x_k),d_k \rangle <0,
\]
which means that for $k>\bar{k}$, $\alpha_k=1$ passes condition (\ref{eq:nonmonotone-condition-HAN}). Moreover, Proposition~\ref{prop:super_linear_local_rate_curvuture_condition} ensures that the unit step size $\alpha_k=1$ eventually satisfies condition~\eqref{eq:wolf_condition_HAN} and is consequently accepted by the HAN line search.
\\
\textbf{Claim 6. }\textit{the sequence $\{x_k\}$ converges to $\{\bar{x}\}$ with Q-superlinear rate.} Since Claims 4 and 5 establish that the unit stepsize is accepted for all sufficiently large $k$, combining this result with \eqref{eq:nonmonotone_superlinear_relationship} yields the local superlinear convergence rate of the algorithm.

\end{proof}

In the following corollary, we show that local superlinear convergence can still be achieved when the approximation error in the generalized Hessian vanishes sufficiently fast only along the direction of the algorithmic step.

\begin{corollary}
Under the setting of Proposition~\ref{prop:superlinear_convergent_rate}, suppose that $D^\ast \nabla f(x_k)$ is positive semidefinite. Then, the algorithm retains its local superlinear convergence rate if condition~\eqref{eq:superlinear_inexact_condition} is replaced by the weaker condition
\begin{equation}
d_H\Big(D^\ast \nabla \hat{f}(x_k)(d_k),D^\ast \nabla f(x_k)(d_k)\Big)
=o(\|x_k-\bar{x}\|),
\end{equation}
where $d_k$ is a solution of~\eqref{eq:inclusion_existance_prop}.
\end{corollary}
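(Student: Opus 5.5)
The plan is to rerun the proof of Proposition~\ref{prop:superlinear_convergent_rate} and check where condition~\eqref{eq:superlinear_inexact_condition} enters. It is used in exactly two places: in Claim~1, to show that $D^\ast\nabla f(\bar{x})$ is positive semidefinite, and in Claim~3, to produce the perturbation $\epsilon_k$ in \eqref{eq:superlinear_convergence_nonmonotone_algoithm_equation1}. Everything afterwards depends only on $\|\epsilon_k\|=o(\|x_k-\bar{x}\|)$, on tilt stability of $\bar{x}$, and on the HAN acceptance arguments of Claims~4--6. These later steps make no further use of the approximation. So it suffices to recover Claims~1 and~3 under the weaker, direction-restricted hypothesis.

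For Claim~1, I would no longer pass through $\hat f$. Instead I would use the added assumption that $D^\ast\nabla f(x_k)$ is positive semidefinite along the iterates, together with the robustness (closed-graph) property of the limiting coderivative of the Lipschitz map $\nabla f$. Take $u\in D^\ast\nabla f(\bar{x})(v)$. The limiting normal cone is the outer limit of regular normals at nearby graph points, so $u$ is a limit of $u_j\in \widehat{D}^\ast\nabla f(y_j)(v_j)\subset D^\ast\nabla f(y_j)(v_j)$ with $y_j\to\bar{x}$ and $v_j\to v$. Positive semidefiniteness at the $y_j$ then gives $\langle u_j,v_j\rangle\ge 0$, and passing to the limit yields $\langle u,v\rangle\ge0$.

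Here lies the one subtlety, which I expect to be the main obstacle. The hypothesis is stated at the $x_k$, while the approximating points $y_j$ are arbitrary. I would read ``$D^\ast\nabla f(x_k)$ is positive semidefinite'' as holding on $\Omega$, as in Proposition~\ref{prop:superlinear_convergent_rate}. Alternatively, one can bypass Claim~1 entirely: $\bar{x}$ is a stationary limit point, and positive semidefiniteness of $D^\ast\nabla f$ near $\bar{x}$ characterizes local convexity. That, together with metric regularity, again gives tilt stability via Theorem~4.13 of \cite{drusvyatskiy2014second}. Claim~2 then follows verbatim.

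For Claim~3, the inclusion from \eqref{eq:inclusion_existance_prop} is $-\nabla f(x_k)-\mu_kR_kd_k\in D^\ast\nabla\hat f(x_k)(d_k)$. The weaker Hausdorff condition, evaluated precisely at the vector $d_k$ (with no normalization needed), supplies a point $w_k\in D^\ast\nabla f(x_k)(d_k)$ at distance $o(\|x_k-\bar{x}\|)$. Setting $\epsilon_k:=w_k-(-\nabla f(x_k)-\mu_kR_kd_k)$ reproduces \eqref{eq:superlinear_convergence_nonmonotone_algoithm_equation1} with the same error order. Note that the original proof applied \eqref{eq:superlinear_inexact_condition} only at $v=d_k/\|d_k\|$ and rescaled by $\|d_k\|\le\kappa l\|x_k-\bar{x}\|+\kappa\|\epsilon_k\|$, which is bounded. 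Working directly at $d_k$ therefore loses nothing.

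From here, the decomposition via Lemma~5.6 of \cite{duy2023generalized}, the estimates \eqref{eq:super_linear_convergence_nonmonotone_direction_estimate}--\eqref{eq:superlineae_local_rate_last_equation}, and Claims~4--6 go through unchanged. The result is the Q-superlinear rate.
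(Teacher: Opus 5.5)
Your proposal is correct and follows essentially the paper's route. The paper's proof simply reruns the argument of Proposition~\ref{prop:superlinear_convergent_rate}, getting Claim~1 directly from the positive semidefiniteness of $D^\ast\nabla f$ and leaving the rest unchanged; in Claim~3 your use of the weakened Hausdorff condition at $d_k$ to produce $\epsilon_k$ is exactly what that rerun needs. Under your reading that the hypothesis holds on $\Omega$, Claim~1 is immediate because $\bar{x}\in\Omega$ (via Proposition~\ref{prop:global_convergent_nonmonotone}), so your limiting-normal argument through auxiliary points $y_j$ is unnecessary — and those points need not lie in $\Omega$ anyway.
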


\begin{proof}
The proof follows the same arguments as those in Proposition~\ref{prop:superlinear_convergent_rate}. In particular, the first claim follows directly from the positive semidefiniteness of $D^\ast \nabla f(x_k)$.

\end{proof}

Proposition \ref{prop:superlinear_convergent_rate} provides a unified framework for establishing local superlinear convergence of coderivative-based Newton methods, even when approximate second-order information is used. The key condition is that the approximation error decreases faster than the distance to the solution. In the semi-stochastic setting, this condition can be ensured by appropriately increasing the number of samples used to approximate the second-order structure. This sampling-based approach to controlling the second-order approximation error has been considered in previous works on semi-stochastic variants of the classical Newton method; see, for example, [12,14]. Together, Propositions~\ref{prop:superlinear_convergent_rate} and \ref{prop:global_convergent_nonmonotone} show that coderivative-based Newton methods can be extended to inexact and semi-stochastic settings while preserving global convergence and local superlinear convergence under suitable error control.

To summarize, we compare our method with existing generalized Newton methods and highlight their main differences. First, Algorithm~\ref{alg:nomonotone-newton} uses the Mordukhovich coderivative of the gradient mapping to construct the generalized Hessian, whereas most existing generalized Newton methods rely on the Clarke generalized Jacobian. Recently, two coderivative-based Newton methods were proposed in \cite{khanh2024globally,chao2024coderivative}. The first employs an Armijo line search to globalize the Newton step, while the second uses a Wolfe line search. Although these globalization strategies can be effective, the former may lead to excessively small step sizes, while the latter may incur high computational costs. To address these limitations, our method employs the HAN line search, which provides a more flexible globalization strategy. The second difference is the regularization strategy. Existing coderivative-based Newton methods use a fixed regularization formula, while our method adjusts the regularization parameter based on the behavior of the algorithm, as described in Section~\ref{sec:adaptive_regularization}. Finally, our convergence analysis is more general than that of existing coderivative-based Newton methods, as it allows the generalized Hessian to be computed either exactly or approximately. This makes the framework applicable to both deterministic methods with exact second-order information and inexact or semistochastic variants based on approximation or sampling.

\section{Application to Convex Composite Optimization Problems}\label{sec:application_lasso}

In this section, we apply the proposed coderivative-based Newton framework to the class of \emph{convex composite optimization problems}. This class encompasses many important optimization models arising in machine learning, statistics, signal processing, and image reconstruction. A convex composite optimization problem consists of minimizing the sum of a smooth convex function and a possibly nonsmooth convex regularization term. Its general form is given by
  
\begin{equation}\label{eq:composite_convex_definition}
    \min_{x \in \mathbb{R}^n} \; \varphi(x) := h(x) + g(x),
\end{equation}
where $h : \mathbb{R}^n \to \mathbb{R}$ is a convex and smooth function, and the regularizer $g : \mathbb{R}^n \to \overline{\mathbb{R}}$ is a convex, extended real-valued one.

The optimization problem in \eqref{eq:composite_convex_definition} is a first-order nonsmooth problem, whereas coderivative-based Newton methods are designed to solve second-order nonsmooth systems. To bridge this gap, we employ the \emph{forward--backward envelope} (FBE), which provides an equivalent reformulation of \eqref{eq:composite_convex_definition}. Specifically, the FBE preserves the set of minimizers of the original problem while endowing it with a second-order nonsmooth structure that is well suited to the application of coderivative-based Newton methods.

\begin{definition}[\textbf{forward-backward envelope}]
Let $\varphi = h + g$ be as defined in (\ref{eq:composite_convex_definition}), and let $\gamma > 0$. The \emph{forward--backward envelope} \textnormal{(}FBE\textnormal{)} of $\varphi$ associated with the parameter $\gamma$ is defined by
\begin{equation}\label{eq:FBE_defenition}
    \varphi_{\gamma}(x):=\inf_{y \in \mathbb{R}^n} \{h(x)+\langle \nabla h(x),y-x \rangle +g(y) + \frac{1}{2\gamma} \|y-x\|^2 \},
\end{equation}
which can be represented using Moreau envelope \eqref{eq:Moreau_Envelope} as follows:
\begin{equation}
     \varphi_{\gamma}(x)=h(x)-\frac{\gamma}{2}\|\nabla h(x)\|^2+e_{\gamma}g(x-\gamma\nabla h(x)).
\end{equation}

\end{definition}

The FBE has already been used for developing some efficient algorithms to solve nonsmooth optimization problems; see, e.g., \cite{patrinos2013proximal,stella2017forward,themelis2018forward} and references therein. We recall several properties of the FBE for convex composite extended-real-valued functions, adapted from \cite{patrinos2013proximal, stella2017forward}, that are required for the implementation of our proposed algorithm.

\begin{proposition}[\textbf{Several properties of FBE}]\label{prop:several_properties_FBE}
   Let $\varphi=h+g$ be as defined in \eqref{eq:composite_convex_definition}, and let $\gamma>0$. Assume that $h$ is $C^2$-smooth on $\mathbb{R}^n$ and that $\nabla h$ is Lipschitz continuous on $\mathbb{R}^n$ with modulus $l>0$. Then, the following properties hold:\\
    $(i)$ The FBE $\varphi_{\gamma}$ of $\varphi$ is $C^1$-smooth on $\mathbb{R}^n$ with the gradient 
    \[
    \nabla \varphi_{\gamma}(x)=\gamma^{-1}(I-\gamma \nabla^2h(x))(x-\text{Prox}_{\gamma g}(x-\gamma \nabla h(x))), \  \ \  x\in \mathbb{R}^n.
    \]\\
    Moreover, the set of optimal solutions to \eqref{eq:composite_convex_definition}
coincides with the set of stationary points of $\varphi_\gamma$; in particular,
    \[
    \text{argmin } \varphi= \text{zer} \nabla \varphi_\gamma:=\{x \in \mathbb{R}^n\  |\ \nabla \varphi_\gamma(x)=0\} \ \ \text{for all} \ \ \gamma\in (0,1/l).
    \]
    $(ii)$ Let $h(x):=\frac{1}{2}\langle Ax,x \rangle+\langle b,x \rangle+\alpha$, where $A\in \mathbb{R}^{n\times n}$ is a positive-semidefiite symmetric matrix, $b \in \mathbb{R}^n$, and $\alpha \in \mathbb{R}$.  Then for all $\gamma \in (0,1/l)$, the FBE $\varphi_\gamma$ is convex and its gradient $\nabla \varphi_{\gamma}$ is globally Lipshitzian on $\mathbb{R}^n$ with modulus $L$, where $ L:=2(1-\gamma \lambda_{min(A)})/{\gamma}$. If $A$ is positive-definite, then $\varphi_\gamma$ is strongly convex with modulus $K$, where
    \[
    K:=\min \{(1- \gamma \lambda_{min (A)})\lambda_{min(A)}, (1-\gamma\lambda_{max(A)})\lambda_{max(A)}\}.
    \]
   
\end{proposition}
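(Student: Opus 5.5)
The plan is to work directly from the Moreau-envelope representation
\[
\varphi_\gamma(x)=h(x)-\tfrac{\gamma}{2}\|\nabla h(x)\|^2+e_\gamma g(x-\gamma\nabla h(x))
\]
and the classical facts about $e_\gamma g$ for convex l.s.c.\ $g$. The function $e_\gamma g$ is $C^1$ with $\nabla e_\gamma g(z)=\gamma^{-1}(z-\operatorname{Prox}_{\gamma g}(z))$. The mapping $\operatorname{Prox}_{\gamma g}$ is single-valued and firmly nonexpansive.

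\textbf{Part (i), smoothness and gradient.} Since $h\in C^2$, the map $x\mapsto x-\gamma\nabla h(x)$ is $C^1$ with Jacobian $I-\gamma\nabla^2 h(x)$, which is symmetric. Apply the chain rule to each of the three terms:
\begin{itemize}
\item the first term gives $\nabla h(x)$;
\item the second gives $-\gamma\nabla^2h(x)\nabla h(x)$;
\item the third gives $\gamma^{-1}(I-\gamma\nabla^2h(x))\bigl(x-\gamma\nabla h(x)-\operatorname{Prox}_{\gamma g}(x-\gamma\nabla h(x))\bigr)$.
\end{itemize}
Expanding the third term, its $-(I-\gamma\nabla^2 h)\nabla h$ piece cancels the first two terms exactly. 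What remains is the stated formula. Continuity of the gradient follows from continuity of $\nabla^2 h$ and of $\operatorname{Prox}_{\gamma g}$.

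\textbf{Part (i), minimizers and stationary points.} For $\gamma<1/l$ we have $\|\gamma\nabla^2h(x)\|\le\gamma l<1$, so $I-\gamma\nabla^2h(x)$ is nonsingular. Hence $\nabla\varphi_\gamma(x)=0$ if and only if $x=\operatorname{Prox}_{\gamma g}(x-\gamma\nabla h(x))$. By the prox optimality condition this is $-\nabla h(x)\in\partial g(x)$, i.e.\ $0\in\partial\varphi(x)$. By convexity, this holds exactly when $x\in\operatorname{argmin}\varphi$.

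\textbf{Part (ii), quadratic case.} Let $h=\frac12\langle Ax,x\rangle+\langle b,x\rangle+\alpha$, and set $Q:=I-\gamma A$, which is positive definite for $\gamma<1/l$ with $l=\lambda_{\max}(A)$. The gradient becomes
\[
\nabla\varphi_\gamma(x)=\gamma^{-1}Q\bigl(x-P(Qx-\gamma b)\bigr),\qquad P:=\operatorname{Prox}_{\gamma g}.
\]
For any $x,y$, put $u=Qx-\gamma b$ and $w=Qy-\gamma b$. Firm nonexpansiveness gives
\[
\langle P(u)-P(w),u-w\rangle\ge\|P(u)-P(w)\|^2 .
\]
Then compute
\[
\langle\nabla\varphi_\gamma(x)-\nabla\varphi_\gamma(y),x-y\rangle=\gamma^{-1}\bigl(\langle Q(x-y),x-y\rangle-\langle P(u)-P(w),Q(x-y)\rangle\bigr).
\]
Since $u-w=Q(x-y)$, the subtracted term is $\langle P(u)-P(w),u-w\rangle\le\|u-w\|^2$. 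This yields the lower bound
\[
\gamma^{-1}\langle (Q-Q^2)(x-y),x-y\rangle=\langle (I-\gamma A)A(x-y),x-y\rangle\ge 0 ,
\]
so $\nabla\varphi_\gamma$ is monotone and $\varphi_\gamma$ is convex. When $A\succ0$, the eigenvalues of $(I-\gamma A)A$ are $(1-\gamma\lambda)\lambda$ for $\lambda\in[\lambda_{\min},\lambda_{\max}]$. The function $\lambda\mapsto(1-\gamma\lambda)\lambda$ is concave, so its minimum over this interval is attained at an endpoint. That gives strong monotonicity with modulus $K$, i.e.\ strong convexity with modulus $K$.

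\textbf{Part (ii), Lipschitz bound.} We have
\[
\|\nabla\varphi_\gamma(x)-\nabla\varphi_\gamma(y)\|\le\gamma^{-1}\|Q\|\bigl(\|x-y\|+\|Q\|\|x-y\|\bigr).
\]
Since $\|Q\|=1-\gamma\lambda_{\min}(A)\le1$, this is at most $2\gamma^{-1}(1-\gamma\lambda_{\min}(A))\|x-y\|$, which is $L$.

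\textbf{Main obstacle.} The one genuinely nontrivial step is the convexity and strong convexity estimate. The key is to pair firm nonexpansiveness with the identity $u-w=Q(x-y)$; the rest is bookkeeping.
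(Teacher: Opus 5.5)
Your proof is correct, but it does not follow a proof in the paper, because the paper gives none. The proposition is only recalled (``adapted from'') Patrinos--Bemporad and Stella--Themelis--Patrinos, so what you have written is a self-contained derivation of cited facts. Part (i) is the standard chain-rule computation followed by the fixed-point characterization $x=\operatorname{Prox}_{\gamma g}(x-\gamma\nabla h(x))$. Your Lipschitz estimate in (ii) reproduces the constant $L$ exactly.

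There is one small slip in (ii). The firm-nonexpansiveness inequality you display, $\langle P(u)-P(w),u-w\rangle\ge\|P(u)-P(w)\|^2$, is a lower bound, and it is not what you use. The bound you actually need is $\langle P(u)-P(w),u-w\rangle\le\|u-w\|^2$. This follows from Cauchy--Schwarz plus nonexpansiveness of $P$, or equivalently from monotonicity of $I-P=\gamma\nabla e_\gamma g$. Indeed, your computation is the identity
\[
\langle\nabla\varphi_\gamma(x)-\nabla\varphi_\gamma(y),x-y\rangle=\langle (I-\gamma A)A(x-y),x-y\rangle+\gamma^{-1}\langle (I-P)u-(I-P)w,u-w\rangle ,
\]
in which you drop the second, nonnegative, term. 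So the real ``key'' is monotonicity of $I-P$.

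This identity is the differential form of the decomposition commonly used for the quadratic case in the FBE literature:
\[
\varphi_\gamma(x)=\tfrac12\langle A(I-\gamma A)x,x\rangle+\langle (I-\gamma A)b,x\rangle+\alpha-\tfrac{\gamma}{2}\|b\|^2+e_\gamma g\bigl((I-\gamma A)x-\gamma b\bigr).
\]
Here $\varphi_\gamma$ is a convex quadratic plus a convex function composed with an affine map. The two routes compare as follows:
\begin{itemize}
\item The decomposition gives convexity, and the strong-convexity modulus $\lambda_{\min}(A(I-\gamma A))\ge K$, directly at the level of function values, without differentiating.
\item Your gradient-monotonicity route shares its ingredients ($u-w=Q(x-y)$ and nonexpansiveness of $P$) with the Lipschitz estimate, so one computation serves both claims.
\end{itemize}

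A minor point: $l$ is any Lipschitz modulus of $\nabla h$, so in general $l\ge\lambda_{\max}(A)$ rather than $l=\lambda_{\max}(A)$. This changes nothing, since $\gamma<1/l$ still gives $I-\gamma A\succ0$ and $(1-\gamma\lambda)\lambda\ge0$ on the spectrum of $A$.
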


In the remainder of this section, we consider three representative convex composite optimization problems arising in machine learning: the lasso, support vector machine (SVM), and logistic lasso. These models capture important applications in sparse learning and classification and serve to illustrate the applicability of the FBE and the proposed coderivative-based Newton framework to a broad class of nonsmooth machine learning problems.

\subsection{Lasso}\label{sec:lasso_sub_section}

In this section, we consider the Lasso problem as a particular instance of a convex composite optimization problem with a quadratic smooth component. Specifically, we study the problem
\begin{equation}\label{eq:quadratic_convex_composite}
    \min_{x\in\mathbb{R}^n} \;
    \varphi(x)
    :=\frac{1}{2}\langle Ax,x\rangle+\langle b,x\rangle+\alpha+g(x),
\end{equation}
where $A\in\mathbb{R}^{n\times n}$ is a symmetric positive semidefinite matrix, $b\in\mathbb{R}^n$, $\alpha\in\mathbb{R}$, and $g:\mathbb{R}^n\to\overline{\mathbb{R}}$ is a convex, extended real-valued function.

A prominent example of \eqref{eq:quadratic_convex_composite} is the Lasso problem, originally introduced by Tibshirani \cite{tibshirani1996regression}. Given a data matrix $\mathcal{A}\in\mathbb{R}^{m\times n}$, an observation vector $\mathcal{Y}\in\mathbb{R}^m$, and a regularization parameter $\mu>0$, the Lasso problem is formulated as
\begin{equation}\label{eq:lasso_first_formulation}
    \min_{x\in\mathbb{R}^n}
    \phi(x)
    :=\frac{1}{2}\|\mathcal{A}x-\mathcal{Y}\|_2^2+\mu\|x\|_1.
\end{equation}
The least-squares term measures the discrepancy between the observations and the model predictions, whereas the $\ell_1$-regularization term promotes sparse solutions. This sparsity-inducing property has made the Lasso a fundamental model in high-dimensional statistics and machine learning, as well as in a variety of applications involving sparse estimation.

Expanding the quadratic data-fitting term in \eqref{eq:lasso_first_formulation} yields
\[
    \frac{1}{2}\|\mathcal{A}x-\mathcal{Y}\|_2^2
    =
    \frac{1}{2}\langle \mathcal{A}^*\mathcal{A}x,x\rangle
    -\langle \mathcal{A}^*\mathcal{Y},x\rangle
    +\frac{1}{2}\|\mathcal{Y}\|_2^2.
\]
Consequently, \eqref{eq:lasso_first_formulation} can be equivalently expressed as
\begin{equation}\label{eq:lasso_quadratic}
    \phi(x)
    =
    \frac{1}{2}\langle \widetilde{A}x,x\rangle
    +\langle \widetilde{b},x\rangle
    +\widetilde{\alpha}
    +\mu\|x\|_1,
\end{equation}
where
\[
    \widetilde{A}:=\mathcal{A}^*\mathcal{A},
    \qquad
    \widetilde{b}:=-\mathcal{A}^*\mathcal{Y},
    \qquad
    \widetilde{\alpha}:=\frac{1}{2}\|\mathcal{Y}\|_2^2.
\]
Since $\widetilde{A}$ is symmetric positive semidefinite, \eqref{eq:lasso_quadratic} is a convex composite problem of the form \eqref{eq:quadratic_convex_composite}, with
\[
    g(x)=\mu\|x\|_1.
\]
Thus, the Lasso problem falls directly within the class of problems considered in this paper.

Although the objective function in \eqref{eq:lasso_first_formulation} is convex, it is generally nonsmooth due to the $\ell_1$-regularization term. To facilitate the development of smooth optimization methods, we employ FBE function, which provides a continuously differentiable reformulation while preserving the minimizers of the original problem.

The following three propositions, adapted from \cite{khanh2024globally}, are useful for applying the NCN algorithm to the FBE associated with problem \eqref{eq:quadratic_convex_composite}. Firstly, the following proposition provides an explicit characterization of the coderivative of the FBE function.

\begin{proposition}\label{prop:quadratic_convec_composite_coderivative}
   Let $\varphi=h+g$ be defined as in \eqref{eq:quadratic_convex_composite}, and let $\gamma>0$ be such that $B:=I-\gamma A$ is positive definite. Then, the following formula holds:
    \begin{equation}\label{eq:FBE_coderivatice_relationship}
        \bar{z}\in D^\ast \nabla \varphi_{\gamma}(\bar{x})(\omega) \Longleftrightarrow B^{-1}\bar{z}-A \omega \in D^\ast\partial g (\textnormal{Prox}_{\gamma g}(\bar{u}),\frac{1}{\gamma}(\bar{u}-\textnormal{Prox}_{\gamma g}(\bar{u})))(\omega-\gamma B^{-1}\bar{z})
    \end{equation}
    for any $\bar{x}\in \mathbb{R}^n, \ \omega \in \mathbb{R}^n$, and $\bar{u}:=\bar{x}-\gamma(A\bar{x}+b)$.
\end{proposition}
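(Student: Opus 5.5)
The plan is to write $\nabla\varphi_\gamma$ as a composition involving $\partial g$ and then apply second-order chain and sum rules for coderivatives. With $h(x)=\frac12\langle Ax,x\rangle+\langle b,x\rangle+\alpha$ we have $\nabla^2 h\equiv A$, so Proposition~\ref{prop:several_properties_FBE}(i) gives
\[
\nabla\varphi_\gamma(x)=\gamma^{-1}B\bigl(x-\operatorname{Prox}_{\gamma g}(u(x))\bigr),\qquad u(x):=x-\gamma(Ax+b)=Bx-\gamma b .
\]
Since $B$ is a constant symmetric invertible matrix and $u(\cdot)$ is affine with derivative $B$, the nonsmoothness sits entirely in $\operatorname{Prox}_{\gamma g}$. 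Because $g$ is convex, $\operatorname{Prox}_{\gamma g}=(I+\gamma\partial g)^{-1}$ is single-valued and Lipschitz.

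The first step uses the elementary coderivative rules for compositions with smooth maps whose Jacobian is surjective. These are exact equalities; see \cite{mordukhovich2018variational}, Theorem~1.62 and the sum rule with a smooth summand. Applying them gives
\[
D^\ast\nabla\varphi_\gamma(\bar x)(\omega)=\gamma^{-1}B\omega-\gamma^{-1}B\,D^\ast\operatorname{Prox}_{\gamma g}(\bar u)(B\omega),
\]
where we used $B^\ast=B$. Hence $\bar z\in D^\ast\nabla\varphi_\gamma(\bar x)(\omega)$ if and only if
\[
\omega-\gamma B^{-1}\bar z\in B\,D^\ast\operatorname{Prox}_{\gamma g}(\bar u)(B\omega),
\]
or equivalently $B^{-1}(\omega-\gamma B^{-1}\bar z)\in D^\ast\operatorname{Prox}_{\gamma g}(\bar u)(B\omega)$. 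Some care with the placement of $B$ versus $B^{-1}$ is needed here. I will track it through the adjoint of the inner map $x\mapsto u(x)$ and recheck at the end.

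The second step computes $D^\ast\operatorname{Prox}_{\gamma g}$ from $D^\ast\partial g$. Write $\bar p=\operatorname{Prox}_{\gamma g}(\bar u)$ and $\bar v=\gamma^{-1}(\bar u-\bar p)\in\partial g(\bar p)$. Since $\operatorname{gph}\operatorname{Prox}_{\gamma g}$ is the image of $\operatorname{gph}\partial g$ under the linear isomorphism $(p,v)\mapsto(p+\gamma v,p)$, the normal cones transform by the inverse-transpose. This yields
\[
q\in D^\ast\operatorname{Prox}_{\gamma g}(\bar u)(y)\iff \exists\,\zeta:\ \ y-q\in\gamma^{-1}\cdots,
\]
and more concretely, after solving the linear relations,
\[
q\in D^\ast\operatorname{Prox}_{\gamma g}(\bar u)(y)\iff \gamma^{-1}(y-q)\in D^\ast\partial g(\bar p,\bar v)(q).
\]
This is the standard inverse/sum rule for $(I+\gamma\partial g)^{-1}$.

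The final step substitutes $y=B\omega$ and the $q$ obtained in the first step, then simplifies using $B=I-\gamma A$, i.e. $\gamma^{-1}(I-B)=A$. With the correct form $q=\omega-\gamma B^{-1}\bar z$, we get
\[
\gamma^{-1}(B\omega-q)=\gamma^{-1}\bigl(B\omega-\omega+\gamma B^{-1}\bar z\bigr)=B^{-1}\bar z-A\omega,
\]
which is exactly the relation \eqref{eq:FBE_coderivatice_relationship}.

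I expect the main obstacle to be the bookkeeping of the linear transformations in the first step. I would redo the chain rule directly on graphs: $\operatorname{gph}\nabla\varphi_\gamma$ is the image of $\operatorname{gph}\operatorname{Prox}_{\gamma g}$ under an explicit linear isomorphism of $\mathbb{R}^{2n}$. Computing normal cones under this isomorphism by the transpose-inverse rule then avoids any sign or factor errors in placing $B^{-1}$ correctly.
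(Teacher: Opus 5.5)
Your final formula is correct, but the two identities you derive it from are both false, because you treat the limiting coderivative as if it were odd. It is only positively homogeneous. For Lipschitz $F$ one has $D^\ast F(\bar x)(y)=\partial\langle y,F\rangle(\bar x)$, and $\partial(-\psi)\neq-\partial\psi$ in general.

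\emph{First step.} The outer linear map $-\gamma^{-1}B$ puts $-\gamma^{-1}B\omega$ \emph{inside} the coderivative, so the exact chain and sum rules give
\[
D^\ast\nabla\varphi_\gamma(\bar x)(\omega)=\gamma^{-1}B\omega+B\,D^\ast\operatorname{Prox}_{\gamma g}(\bar u)\bigl(-\gamma^{-1}B\omega\bigr),
\]
not $\gamma^{-1}B\omega-\gamma^{-1}B\,D^\ast\operatorname{Prox}_{\gamma g}(\bar u)(B\omega)$.

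\emph{Second step.} Testing $(q,-y)$ against the adjoint $T^\ast(\xi,\eta)=(\xi+\eta,\gamma\xi)$ of $T(p,v)=(p+\gamma v,p)$ gives $q\in D^\ast\operatorname{Prox}_{\gamma g}(\bar u)(y)\iff q-y\in D^\ast\partial g(\bar p,\bar v)(-\gamma q)$. It does not give $\gamma^{-1}(y-q)\in D^\ast\partial g(\bar p,\bar v)(q)$.

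Both identities already fail for $n=1$, $A=0$, $b=0$, $\gamma=1$, $g=\delta_{\mathbb{R}_+}$. Here $\operatorname{Prox}_{g}(u)=\max\{u,0\}$ and $\nabla\varphi_1(x)=\min\{x,0\}$. One computes $D^\ast\operatorname{Prox}_g(0)(1)=[0,1]$, whereas your rule returns $\{0,1\}$. Likewise $D^\ast\nabla\varphi_1(0)(1)=\{0,1\}$, whereas your first formula returns $1-[0,1]=[0,1]$.

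In your variables $q=\omega-\gamma B^{-1}\bar z$, $y=B\omega$, each of your two identities is the true one with ``$-q\in D^\ast\operatorname{Prox}_{\gamma g}(\bar u)(-y)$'' replaced by ``$q\in D^\ast\operatorname{Prox}_{\gamma g}(\bar u)(y)$''. So the errors cancel exactly, and that cancellation is the only reason you arrive at \eqref{eq:FBE_coderivatice_relationship}. There are two further problems. There is a stray factor $B$ in ``$\omega-\gamma B^{-1}\bar z\in B\,D^\ast\operatorname{Prox}_{\gamma g}(\bar u)(B\omega)$''; your own first formula yields this inclusion without the $B$. You also switch to ``the correct form'' of $q$ without justification. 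As written, the argument does not prove the statement.

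The repair is short. Keep the signs inside the coderivatives. Using the corrected first identity and scaling by $\gamma>0$, $\bar z\in D^\ast\nabla\varphi_\gamma(\bar x)(\omega)$ holds iff $-q\in D^\ast\operatorname{Prox}_{\gamma g}(\bar u)(-y)$. The corrected second identity, applied to the pair $(-q,-y)$ and followed by positive homogeneity, turns this into $\gamma^{-1}(y-q)\in D^\ast\partial g(\bar p,\bar v)(q)$. Your final substitution then goes through unchanged.

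Cleaner still is the graph route you mention at the end, carried out directly from $\operatorname{gph}\partial g$. The set $\operatorname{gph}\nabla\varphi_\gamma$ is the image of $\operatorname{gph}\partial g$ under the affine isomorphism $(p,v)\mapsto\bigl(B^{-1}(p+\gamma v+\gamma b),\,Ap+v+b\bigr)$, which sends $(\bar p,\bar v)$ to $(\bar x,\nabla\varphi_\gamma(\bar x))$. Its linear part has adjoint $(\xi,\eta)\mapsto(B^{-1}\xi+A\eta,\,\gamma B^{-1}\xi+\eta)$. Hence $(\bar z,-\omega)\in N_{\operatorname{gph}\nabla\varphi_\gamma}(\bar x,\nabla\varphi_\gamma(\bar x))$ iff $(B^{-1}\bar z-A\omega,\,\gamma B^{-1}\bar z-\omega)\in N_{\operatorname{gph}\partial g}(\bar p,\bar v)$. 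This is \eqref{eq:FBE_coderivatice_relationship} verbatim, with no sign bookkeeping at all. The paper itself gives no proof here; it imports the formula from \cite{khanh2024globally}.
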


 The following proposition establishes the equivalence of metric regularity and tilt stability for problem \eqref{eq:quadratic_convex_composite} and its corresponding FBE formulation in \eqref{eq:FBE_defenition}. Moreover, it provides an estimate for the inverse mapping of $\partial \varphi_{\gamma}$ in terms of the problem data in \eqref{eq:quadratic_convex_composite}.
\begin{proposition}\label{prop:quadratic_convex_composite_tilt_stability}
    Let $\varphi=h+g$ be as in (\ref{eq:quadratic_convex_composite}), and let $\gamma>0$ be such that $B:=I-\gamma A$ is positive-definite. Then for any $\bar{x}\in \mathbb{R}^n$ satisfying $0 \in \partial \varphi (\bar{x})$ we have:\\
    (i) $\|\partial^2 \varphi_{\gamma}(\bar{x})^{-1}\| \le \|\partial^2 \varphi(\bar{x},0)^{-1}\|+\gamma\|B^{-1}\|.$\\
    (ii)\  $\partial \varphi \text{ is metrically regular around } (\bar{x},0) \text{ if and only if } \nabla \varphi_{\gamma}$ \text{is metrically regular around} $\bar{x}$.\\
    (iii)\ $\bar{x}$ is a tilt-stable local minimizer of $\varphi$ if and only if $\bar{x}$ is a tilt-stable local minimizer of $\varphi_\gamma$.\\
\end{proposition}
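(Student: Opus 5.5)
The plan is to reduce everything to a single linear change of variables linking the two second-order subdifferentials at a stationary point, and then read off (i)--(iii).

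\textbf{Step 1 (the key identity).} Let $0\in\partial\varphi(\bar x)$ and $\bar u=\bar x-\gamma(A\bar x+b)$. Optimality gives $-(A\bar x+b)\in\partial g(\bar x)$, hence $\bar x=\operatorname{Prox}_{\gamma g}(\bar u)$ and $\frac1\gamma(\bar u-\operatorname{Prox}_{\gamma g}(\bar u))=-(A\bar x+b)$. By Proposition~\ref{prop:several_properties_FBE}(i), $\nabla\varphi_\gamma(\bar x)=\gamma^{-1}B(\bar x-\operatorname{Prox}_{\gamma g}(\bar u))=0$, so $\bar x$ is also stationary for $\varphi_\gamma$. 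By the coderivative sum rule (the quadratic part is $C^2$),
\[
\partial^2\varphi(\bar x,0)(w')=Aw'+D^\ast\partial g\big(\bar x,-(A\bar x+b)\big)(w').
\]
Insert this into Proposition~\ref{prop:quadratic_convec_composite_coderivative} with $w':=\omega-\gamma B^{-1}z$. The inclusion there reads $B^{-1}z-A\omega\in D^\ast\partial g(\cdot)(w')$. Adding $Aw'=A\omega-\gamma AB^{-1}z$ gives $(I-\gamma A)B^{-1}z=z$. Hence
\[
z\in\partial^2\varphi_\gamma(\bar x)(\omega)\iff z\in\partial^2\varphi(\bar x,0)\big(\omega-\gamma B^{-1}z\big).
\]
Equivalently, the graphs correspond via $(\omega',z)\mapsto(\omega'+\gamma B^{-1}z,z)$.

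\textbf{Step 2: (i) and (ii).} Take the norm of the inverse as the outer norm $\sup\{\|\omega\|:\ z\in\partial^2\varphi_\gamma(\bar x)(\omega),\ \|z\|\le1\}$. By Step 1, each such $\omega$ equals $w'+\gamma B^{-1}z$ with $z\in\partial^2\varphi(\bar x,0)(w')$. The triangle inequality then gives $\|\omega\|\le\|w'\|+\gamma\|B^{-1}\|$, which is (i).

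For (ii), set $z=0$ in Step 1. This shows $\partial^2\varphi_\gamma(\bar x)(\omega)\ni0$ if and only if $\partial^2\varphi(\bar x,0)(\omega)\ni0$, so the kernels coincide. Both mappings have locally closed graphs: $\partial\varphi$ is maximal monotone, and $\nabla\varphi_\gamma$ is continuous. The Mordukhovich coderivative criterion, which says metric regularity holds iff $D^\ast(\cdot)^{-1}(0)=\{0\}$ (equivalently, the inverse norm is finite), therefore applies to both sides and gives the equivalence.

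\textbf{Step 3: (iii).} This is the main obstacle, because $\varphi_\gamma$ need not be convex under the sole assumption that $B$ is positive definite. I would route the argument through (ii).

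For the direction ($\Rightarrow$), suppose $\bar x$ is tilt-stable for the convex (hence prox-regular and subdifferentially continuous) function $\varphi$. Tilt stability implies strong metric regularity of $\partial\varphi$ at $(\bar x,0)$, so by (ii) $\nabla\varphi_\gamma$ is metrically regular around $\bar x$. Moreover, $\partial^2\varphi_\gamma(\bar x)$ is positive semidefinite. Indeed, for $z\in\partial^2\varphi_\gamma(\bar x)(\omega)$, Step 1 gives $\langle z,\omega\rangle=\langle z,w'\rangle+\gamma\langle z,B^{-1}z\rangle$. The first term is nonnegative because $\partial^2\varphi(\bar x,0)$ is PSD by convexity of $\varphi$, and the second is nonnegative since $B^{-1}\succ0$. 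Since $\bar x$ is stationary for $\varphi_\gamma$, Theorem~4.13 of \cite{drusvyatskiy2014second} (as used in Claim~2 of Proposition~\ref{prop:superlinear_convergent_rate}) yields tilt stability of $\bar x$ for $\varphi_\gamma$.

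For the direction ($\Leftarrow$), tilt stability of the $C^{1,1}$ function $\varphi_\gamma$ gives metric regularity of $\nabla\varphi_\gamma$, hence of $\partial\varphi$ by (ii). For convex $\varphi$ at a minimizer, metric regularity of $\partial\varphi$ together with PSD of $\partial^2\varphi$ again gives tilt stability via the same theorem.

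The delicate points are justifying the sum rule for $D^\ast\partial\varphi$ and verifying the hypotheses of the cited tilt-stability criterion for the possibly nonconvex $\varphi_\gamma$.
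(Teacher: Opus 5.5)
Your proof is correct. It follows the natural route for this result, which the paper does not prove itself but imports from \cite{khanh2024globally}: the second-order sum rule turns Proposition~\ref{prop:quadratic_convec_composite_coderivative} into $z\in\partial^2\varphi_\gamma(\bar x)(\omega)\iff z\in\partial^2\varphi(\bar x,0)(\omega-\gamma B^{-1}z)$, from which (i) follows by the triangle inequality, (ii) by the Mordukhovich criterion, and (iii) from (ii).

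The one inaccuracy is your remark that $\varphi_\gamma$ may fail to be convex. For quadratic $h$ one has $\varphi_\gamma(x)=\min_y\{\varphi(y)+\frac{1}{2\gamma}\langle B(y-x),y-x\rangle\}$, which is convex whenever $B\succ0$. This is Proposition~\ref{prop:several_properties_FBE}(ii), since $\gamma<1/l$ with $l=\lambda_{\max}(A)$ is the same condition as $B\succ0$. So your direct positive-semidefiniteness computation in Step 3 is valid, but you could replace it with the convexity of both functions.
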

The following proposition is used to establish the superlinear convergence of the NCN algorithm when applied to the FBE associated with problem \eqref{eq:quadratic_convex_composite}.

\begin{proposition}\label{prop:semismooth_star_quadratic_convex}
    Let $\varphi=h+g$ be as in (\ref{eq:quadratic_convex_composite}), and let $\gamma>0$ be such that $B:=I-\gamma A$ is positive-definite. Then for any $\bar{x}\in \mathbb{R}^n$ satisfying the stationary condition $0 \in \partial \varphi (\bar{x})$ the following assertions hold:\\
    (i) $\nabla \varphi_\gamma$ is $semismooth^\ast$ at $\bar{x}$ if $\partial g$ is $semismooth^\ast$ at $(\bar{x},\bar{v})$, where $\bar{v}:=-A\bar{x}-b$.\\
    (ii) $\nabla \varphi_\gamma$ is directionally differentiable at $\bar{x}$ if $g$ is twice epi-differentiable at $\bar{x}$ for $\bar{v}$.
\end{proposition}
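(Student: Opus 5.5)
The strategy is to express $\nabla\varphi_\gamma$, in the quadratic case, as a composition of the proximal mapping $\textnormal{Prox}_{\gamma g}$ with affine maps, and then transfer the two properties through that composition. Since $\nabla h(x)=Ax+b$ and $\nabla^2 h\equiv A$, Proposition~\ref{prop:several_properties_FBE}(i) gives
\[
\nabla\varphi_\gamma(x)=\gamma^{-1}B\big(x-\textnormal{Prox}_{\gamma g}(x-\gamma(Ax+b))\big)=\gamma^{-1}B\big(x-P(Bx-\gamma b)\big),\qquad P:=\textnormal{Prox}_{\gamma g}.
\]
The map $x\mapsto Bx-\gamma b$ is affine and invertible. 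At the stationary point $\bar x$ we have $-A\bar x-b=\bar v\in\partial g(\bar x)$, so $\bar u:=\bar x-\gamma(A\bar x+b)=\bar x+\gamma\bar v$ and $P(\bar u)=\bar x$. Both assertions therefore reduce to properties of $P$ at $\bar u$, and $P=(I+\gamma\partial g)^{-1}$.

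\textbf{Assertion (i).} First I show that $P$ is semismooth$^\ast$ at $(\bar u,\bar x)$. The graph of $P$ is the image of $\operatorname{gph}\partial g$ under the linear isomorphism $(x,v)\mapsto(x+\gamma v,x)$, so $\operatorname{gph}P$ is obtained by an invertible linear change of coordinates. The semismooth$^\ast$ property is formulated through normal cones (equivalently, directional limiting coderivatives) to the graph. It is preserved under linear isomorphisms, because normal cones transform by the adjoint inverse, and the identity $\langle u^\ast,u\rangle=\langle v^\ast,v\rangle$ is preserved since the pairing between tangent-type and normal-type vectors is invariant.

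Next I use that $\nabla\varphi_\gamma=\gamma^{-1}B\,(I-P\circ\mathcal{L})$, where $\mathcal{L}$ is the invertible affine map above. For the composition with $\mathcal{L}$, the same graph-isomorphism argument applies. For the sum of the identity with a single-valued Lipschitz map, and for left multiplication by the invertible matrix $\gamma^{-1}B$, the graph again transforms by linear isomorphisms: $(x,y)\mapsto(x,x-y)$ and $(x,y)\mapsto(x,\gamma^{-1}By)$. So every step is a linear change of coordinates on graphs, and semismoothness$^\ast$ of $\partial g$ at $(\bar x,\bar v)$ transfers to $\nabla\varphi_\gamma$ at $\bar x$.

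The main obstacle is to verify carefully that the directional limiting coderivative used in the definition of semismooth$^\ast$ transforms correctly under these isomorphisms, including how the directions $(u,v)$ are mapped. Two routes are available. The first is the chain rule for semismooth$^\ast$ mappings established by Gfrerer and Outrata. The second is to reuse the coderivative formula of Proposition~\ref{prop:quadratic_convec_composite_coderivative} in its directional form.

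\textbf{Assertion (ii).} Here I use Rockafellar's result (Rockafellar--Wets, Thm.~13.40 together with the results on proto-differentiability of the prox) that, for convex $g$, twice epi-differentiability of $g$ at $\bar x$ for $\bar v$ is equivalent to proto-differentiability of $\partial g$ at $(\bar x,\bar v)$. Proto-derivatives transform under the same linear graph isomorphism, so $P$ is proto-differentiable at $\bar u$. Since $P$ is single-valued and globally Lipschitz (nonexpansive), proto-differentiability of $P$ is equivalent to directional differentiability, with $P'(\bar u;w)$ equal to the proto-derivative. Finally, by the chain rule for directional derivatives of Lipschitz maps composed with affine maps,
\[
\nabla\varphi_\gamma{}'(\bar x;w)=\gamma^{-1}B\big(w-P'(\bar u;Bw)\big),
\]
which exists for every direction $w$. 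This establishes directional differentiability of $\nabla\varphi_\gamma$ at $\bar x$.
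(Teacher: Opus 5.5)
The paper gives no proof of its own here; it imports the result from \cite{khanh2024globally}. Your argument is correct and follows the standard route used there: express $\nabla\varphi_\gamma$ through $\textnormal{Prox}_{\gamma g}=(I+\gamma\partial g)^{-1}$ composed with affine maps, transfer semismoothness$^\ast$ of $\partial g$ along the resulting affine isomorphisms of graphs, and for (ii) pass from twice epi-differentiability of $g$ to proto-differentiability of $\partial g$, hence to directional differentiability of the nonexpansive prox. The step you flag as the main obstacle is routine and needs no separate chain rule: for a linear isomorphism $\Phi$ one has $\widehat{N}_{\Phi(\Omega)}(\Phi z)=\Phi^{-*}\widehat{N}_{\Omega}(z)$ and $\Phi(\bar z+tw)=\Phi\bar z+t\Phi w$, so $N_{\Phi(\Omega)}(\Phi\bar z;\Phi w)=\Phi^{-*}N_{\Omega}(\bar z;w)$ with $\langle \Phi^{-*}z^\ast,\Phi w\rangle=\langle z^\ast,w\rangle$.
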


Propositions \ref{prop:quadratic_convec_composite_coderivative}, \ref{prop:quadratic_convex_composite_tilt_stability}, and \ref{prop:semismooth_star_quadratic_convex} provide the necessary characterizations for applying the NCN algorithm to the FBE associated with problem \eqref{eq:quadratic_convex_composite}.

In the remainder of this section, we focus on the lasso problem~\eqref{eq:lasso_quadratic} which is an important example of the convex composite problem with quadratic smooth part. To solve the lasso problem, we focus on the following problem 
\begin{equation}\label{eq:lasso_FBE_minimization}
    \min \phi_{\gamma}(x) \ \ s.t. \ \ x\in\mathbb{R}^n
\end{equation}
where $\phi_\gamma (x)$ is the FBE associated with problem~\eqref{eq:lasso_quadratic}. By Proposition~\ref{prop:several_properties_FBE}, $\phi_\gamma$ is a $C^{1,1}$ convex function with the same set of minimizers as $\phi$ in~\eqref{eq:lasso_first_formulation}. Hence, Algorithm~\ref{alg:nomonotone-newton} can be applied to solve the minimization problem~\eqref{eq:lasso_FBE_minimization}. To do this, in the first step, we need an explicit formulation to solve the inclusion (\ref{eq:direction_inclusion_nonmonotone}) which is here as
\begin{equation}\label{eq:lasso_application_direction_inclusion}
    -\nabla \phi_\gamma (x) \in D^\ast \nabla \phi_\gamma(x)(d)+\mu d
\end{equation}
where we set $R=I$, and $I$ is an identity matrix. In this regard, Proposition~\ref{prop:quadratic_convec_composite_coderivative} shows that we need to compute the coderivative of the subdifferential mapping \(\partial g\), where \(g(x)=\mu\|x\|_1\) is the regularization function. Following~\cite{khanh2024globally}, we present an explicit formula for \(D^\ast\partial g\) and subsequently derive the equation for computing the search direction of the NCN algorithm applied to \(\phi_\gamma\). For the regularization function $g$, we have

\begin{equation}
    \big(\textnormal{Prox}_{\gamma g}(z)\big)_i= 
    \begin{cases}
    z_i-\mu \gamma& \  \textnormal{if } z_i > \mu \gamma,\\
    0              & \  \textnormal{if } - \mu \gamma \le z_i \le \mu \gamma, \\
    z_i+\mu \gamma & \textnormal{if } z_i<- \mu \gamma.
\end{cases}
\end{equation}
and
\begin{equation}\label{eq:subdifferential_of_g_lasso}
\partial g(z)
=
\left\{
v\in\mathbb{R}^n
\;\middle|\;
\begin{array}{lc}
v_i=\mu\,\operatorname{sgn}(z_i), & \text{if } z_i\neq 0,\\[2mm]
v_i\in[-\mu,\mu], & \text{if } z_i=0,
\end{array}
\quad i=1,\ldots,n
\right\} \quad \textnormal{for}\ z\in\mathbb{R}^n.
\end{equation}
Then, the generalized Hessian of $g$ is given by 
\begin{equation}
    D^\ast\partial g(z,y)(v)=\{\omega \in \mathbb{R}^n \ |\ (\frac{1}{\mu}\omega_i,-v_i)\in G(z_i, \frac{1}{\mu}y_i),\ i=1,2,\dots,n\}
\end{equation}
for each $(z,y)\in \textnormal{gph}\  \partial g$ and $v=(v_1,v_2, \dots, v_n)\in \mathbb{R}^n$, where the mapping $G:\mathbb{R}^2 \rightrightarrows \mathbb{R}^2$ is defined by\\
\begin{equation}
G(t,p)=\begin{cases}
    \{0\}\times \mathbb{R}& \  \textnormal{if } t\neq0, \ p\in \{-1,1\},\\
    \mathbb{R}\times \{0\}              & \  \textnormal{if } t=0, p \in (-1,1), \\
    (\mathbb{R}_{+} \times \mathbb{R}_{-})\cup (\{0\} \times \mathbb{R}) \cup (\mathbb{R}\times \{0\}) & \textnormal{if } t=0, p=-1,\\
     (\mathbb{R}_{-} \times \mathbb{R}_{+})\cup (\{0\} \times \mathbb{R}) \cup (\mathbb{R}\times \{0\}) & \textnormal{if } t=0, p=1,\\
     \emptyset & \textnormal{otherwise}.
\end{cases}
\end{equation}
Now, we describe in detail how the search direction \(d\) is computed. To this end, we have

\begin{equation}
    (v^k)_i=
    \begin{cases}
        (u^k)_i-\mu \gamma & \ \textnormal{if}\ (u^k)_i>\mu \gamma,\\
        0 & \textnormal{if} -\mu \gamma \le (u^k)_i \le \mu \gamma, \ \ \textnormal{where } u^k=x_k-\gamma(\mathcal{A}^*\mathcal{A}x_k+\mathcal{Y}).\\
        (u^k)_i+\mu \gamma & \textnormal{if } (u^k)_i<- \mu \gamma
    \end{cases}
\end{equation}
 where
\begin{equation}
        (X^k)_i:=\begin{cases}
            \gamma (\mathcal{A}^*\mathcal{A})_i & \textnormal{ if }(v^k)_i \neq 0,\\
            I_i & \textnormal{ if }(v^k)_i=0.       
        \end{cases}
\end{equation}
At each iteration \(k\) of the NCN algorithm, we compute the search direction \(d_k\) by solving the following equation:

\begin{equation}
    (B X^k+\gamma \mu_k I)d_k=B(v^k-x_k),
\end{equation}
where $B:=I-\gamma \mathcal{A}^\ast \mathcal{A}$.

As a final step, it suffices to verify that problem \eqref{eq:lasso_FBE_minimization} satisfies the conditions required to establish the global convergence and local superlinear convergence rate of the NCN algorithm. Since $I$ is a positive definite symmetric matrix and $D^\ast\nabla\phi_\gamma(x_k)$ is positive semidefinite due to the convexity of $\phi_\gamma$ on its domain, global convergence follows from Proposition~\ref{prop:global_convergent_nonmonotone}. Moreover, by \eqref{eq:subdifferential_of_g_lasso}, the graph of $\partial g$ can be represented as a finite union of closed convex sets. Hence, $\partial g$ is semismooth$^\ast$ on its graph; see \cite{gfrerer2021semismooth}. Furthermore, $g$ is a proper, convex, piecewise linear function on $\mathbb{R}^n$. It then follows from Proposition 13.9 in \cite{rockafellar1998variational} that $g$ is twice epi-differentiable on $\mathbb{R}^n$. Under the assumption that the minimizer \(\bar{x}\) is tilt-stable, Proposition 4.5 in~\cite{drusvyatskiy2014second} ensures the metric regularity of \(\partial\phi\) around \((\bar{x},0)\). Consequently, by applying Propositions~\ref{prop:semismooth_star_quadratic_convex}, \ref{prop:quadratic_convex_composite_tilt_stability} and~\ref{prop:superlinear_convergent_rate}, we obtain the local superlinear convergence rate of the NCN algorithm for problem \eqref{eq:lasso_FBE_minimization}.

\subsection{Support Vector Machine}\label{sec:application_svm}
This section considers the support vector machine (SVM) model, which is widely used for binary classification problems; see (\cite{gareth2013introduction}, Chapter 6) and the references therein. Let $\mathcal{S}\in\mathbb{R}^{m\times n}$ be a data matrix and let $y^{\mathrm{svm}}\in\mathbb{R}^m$ denote the corresponding label vector, where $y_i^{\mathrm{svm}}\in\{1,-1\}$ for $i=1,\ldots,m$. The SVM approach constructs a linear classifier by solving the following optimization problem:
\begin{equation}\label{eq:svm-primal}
\begin{aligned}
\mbox{minimize }\;\quad&
\frac{1}{2}\|w\|^2 + C\sum_{i=1}^m \xi_i
\;\mbox{ over }\;w,\bar{b},\mathbf{\xi}\\
\mbox{subject to }\quad&
y^{\mathrm{svm}}_i(\langle w,\bar{a}_i\rangle+\bar{b})
\ge 1-\xi_i,\quad
\xi_i\ge 0,\quad i=1,\ldots,m,
\end{aligned}
\end{equation}
where $\bar{a}_i$ denotes the $i$-th row of the data matrix $\mathcal{S}$. The corresponding dual problem of \eqref{eq:svm-primal} is given by
\begin{equation}\label{eq:svm-dual}
\begin{aligned}
\mbox{minimize }\;\quad&
\frac{1}{2}\langle \mathbf{\lambda}, D\mathbf{\lambda}\rangle
-\langle \mathbf{\lambda}, e \rangle
\;\mbox{ over }\;\mathbf{\lambda}\in\mathbb{R}^m\\
\mbox{subject to }\;\quad&
\langle \mathbf{\lambda}, y^{\mathrm{svm}}\rangle=0,\quad
0\le \mathbf{\lambda}\le Ce,
\end{aligned}
\end{equation}
where $e\in\mathbb{R}^m$ is the all-one vector, and $D\in\mathbb{R}^{m\times m}$ is a symmetric positive semidefinite matrix whose entries are defined by
\[
D_{ij}
=
y_i^{\mathrm{svm}}y_j^{\mathrm{svm}}
\langle\bar{a}_i,\bar{a}_j\rangle.
\]
For nonlinear classifiers, the dual SVM problem can be formulated in the same manner by using an appropriate kernel function.

The dual problem \eqref{eq:svm-dual} is a convex quadratic programming problem with a single linear equality constraint and bound constraints on the variables. In particular, its feasible region is the intersection of a hyperplane and a box. This observation motivates the consideration of the following more general class of convex quadratic programs with a single linear constraint and bound constraints, which we abbreviate as SLBQP:
\begin{equation}\label{eq:SLBQP}
\begin{aligned}
\mbox{minimize }&
\frac{1}{2}\langle x,Qx \rangle+\langle c,x \rangle
\mbox{ over }x\in\mathbb{R}^n\
\quad \mbox{subject to }&
\langle p,x \rangle=q,\quad l\le x\le u,
\end{aligned}
\end{equation}
where $Q\in\mathbb{R}^{n\times n}$ is a symmetric positive semidefinite matrix, $c\in\mathbb{R}^n$, $p\in\mathbb{R}^n$, $q\in\mathbb{R}$, $l\in(\mathbb{R}\cup{\{-\infty\}})^n$, and $u\in(\mathbb{R}\cup{\{\infty \}})^n$. The SLBQP class includes support vector machine problems and bound-constrained quadratic programming problems as special cases.

We observe that problem~(\ref{eq:SLBQP}) can be formulated as a convex composite problem~(\ref{eq:composite_convex_definition}) with

\begin{equation}\label{eq:convex_composite_SLBQP}
\min h(x)+g(x) \quad \text{where} \quad h(x)=\frac{1}{2}\langle x,Qx\rangle+\langle c,x\rangle
\quad\text{and}\quad
g(x)=\delta_{\Omega}(x),
\end{equation}

where $\delta_{\Omega}$ denotes the indicator function of the feasible set

$$
\Omega:=\{x\in\mathbb{R}^n\mid \langle p,x\rangle=q,\; l\leq x\leq u\}.
$$

This formulation allows us to employ the FBE and define the function $\varphi_\gamma$ in~(\ref{eq:FBE_defenition}). By Proposition~\ref{prop:several_properties_FBE}, $\varphi_\gamma$ is $C^{1,1}$ and has the same set of minimizers as the original problem. Consequently, the NCN algorithm can be applied to $\varphi_\gamma$. To implement the NCN method, we need to characterize the coderivative of $\nabla\varphi_\gamma$. By Proposition~\ref{prop:quadratic_convec_composite_coderivative}, this coderivative can be characterized in terms of the coderivative of $\partial g$. Based on this characterization, we derive an explicit Newton system for computing the search direction of the NCN algorithm. Recall that, after constructing the forward-backward envelope $\varphi_\gamma$ associated with the convex composite problem in~(\ref{eq:convex_composite_SLBQP}), where $h$ and $g$ are defined therein, the computation of the Newton direction requires solving the inclusion in~(\ref{eq:direction_inclusion_nonmonotone}). According to Proposition~\ref{prop:quadratic_convec_composite_coderivative}, this requires, first, evaluating the proximal mapping of $g$ and, subsequently, the coderivative of $\partial g$ at the corresponding proximal point.

Since $g$ is the indicator function of the convex polyhedral set $\Omega$, its proximal mapping is given by the Euclidean projection onto $\Omega$, i.e.,

$$
\operatorname{Prox}_{\gamma g}(x)=\operatorname{Proj}_{\Omega}(x).
$$
Moreover, the limiting subdifferential of $g$ is characterized by the normal cone to $\Omega$ as

$$
\partial g(x)=N_{\Omega}(x), \qquad x\in\Omega,
$$

and $\partial g(x)=\emptyset$ for $x\notin\Omega$. Hence, the coderivative of the subdifferential mapping $\partial g$ coincides with the coderivative of the normal-cone mapping $N_{\Omega}$. Since $\Omega$ is a convex polyhedral set, an explicit characterization of $D^*N_{\Omega}$ is available; see, e.g.,~\cite{chao2024coderivative}.

In the final part of this section, following \cite{chao2024coderivative}, we present the Newton system for computing the search direction. To this end, we first express the feasible set $\Omega$ in the equivalent form
\[
\Omega:=\{x \in \mathbb{R}^n | \langle p,x \rangle=q, \ l \le x \le L\}= \{x \in \mathbb{R}^n \ |\ \langle p,x \rangle=q, Cx \le \bar{d}\}, 
\]
where $p\in\mathbb{R}^n$ and $l,L\in\overline{\mathbb{R}}^n$. Without loss of generality, we assume that
$l_i<L_i$ for all $i=1,\ldots,n$. The matrix $C$ and vector $\bar d$ are given by
\[ C = \begin{pmatrix} I_n\\ -I_n \end{pmatrix}\quad \textnormal{and} \quad \bar{d}= \begin{pmatrix} L\\ -l \end{pmatrix}. \]
Next, for $u\in\mathbb{R}^n$, we define the matrix $P$ associated with the projection $\operatorname{Proj}_{\Omega}(u)$.

\begin{equation*}
P:=\left\{\begin{aligned}
&\Sigma &\text{ if}\quad \Sigma p =0,\\
&\Sigma-\|\Sigma a\|^{-2}\Sigma p p^T \Sigma &\text{ if}\quad \Sigma p\neq 0,
\end{aligned}\right.
\end{equation*}
where $\Sigma:=I-\Theta$, with $\Theta:=\operatorname{diag}(\theta)$, and

\[\theta_i:=\left\{
\begin{aligned}
&0\;\textrm{ if}\quad l_i<\textnormal{Proj}_\Omega(u)_i<L_i,\\
&1\;\textrm{ otherwise}.
\end{aligned}
\right.\]

Now by coosing regularization matrix $R_k=R:=I-\gamma Q  \succ 0$, the Newton system to find a search direction is\\
\[
((1+\mu_k)I-P^kR)d_k=\textnormal{Proj}_\Omega(u^k)-x_k, \quad \textnormal{where}\quad u^k:=x_k-\gamma (Qx_k+c)
\]
and where $P^k$ is the matrix $P$  generated by $u^k$. We can write\\
$\bullet$ If $\Sigma p=0$, then we have
\begin{equation}
\left\{\begin{aligned}
&(1+\mu_k)(d_k)_\delta= (v^k-x^k)_\delta,\\
&(1+\mu_k)I_{\bar\delta}-R_{\bar\delta\times\bar\delta}(d_k)_{\bar\delta}=(v^k-x^k)_{\bar\delta}+R_{\bar\delta\times\delta}(d_k)_\delta,
\end{aligned}
\right.
\end{equation}
where $v^k:=\textnormal{Proj}_\Omega(u^k)$, $\delta:=\{i\;|\;v_i^k= l_i\;
\text{ or }\;v_i^k= L_i\}$, and $\bar\delta=[n]-\delta=\{i\;|\;l_i<v_i^k<L_i\}$.\\[1ex]
$\bullet$ If $\Sigma p \neq 0$, then we have
\begin{equation}
\left\{\begin{aligned}
&(1+\mu_k)(d_k)_\delta=(v^k-x^k)_\delta,\\
&(1+\mu_k)I_{\bar\delta}-P^k_{\bar\delta\times\bar\delta}R_{\bar\delta\times \bar\delta})(d_k)_{\bar\delta}=(v^k-x^k)_{\bar\delta}+P^k_{\bar\delta\times\bar\delta}R_{\bar\delta\times\delta}(d_k)_{\delta}.
\end{aligned}
\right.
\end{equation}\vspace*{-0.15in}

\subsection{Logistic Lasso}\label{sec:logistic_lasso_application}
In this section, we consider the logistic Lasso problem, which is a well-known example of a convex composite optimization problem with a nonquadratic smooth component. More generally, we consider problem \eqref{eq:composite_convex_definition}, where the smooth function $h(x)$ is not necessarily quadratic. To address this broader class of problems, we adopt a standard approach from the literature in which the smooth component $h$ is locally approximated by a quadratic function based on its Taylor expansion. This approach allows us to extend the analysis developed for quadratic convex composite problems to a more general class of problems with nonquadratic smooth components. In particular, it leads to a class of second-order methods in which, at each iteration $k$, the next iterate $x_{k+1}$ is obtained by solving the following quadratic composite subproblem:

\begin{equation}\label{eq:subproblem_Proximal_Newton}
    x_{k+1}=\text{arg}\min_{x \in \mathbb{R}^n} h(x_k)+\langle \nabla h(x_k), x-x_k \rangle+\frac{1}{2} \langle x-x_k, \nabla^2h(x_k) (x-x_k) \rangle +g(x)
\end{equation}

The use of such a quadratic model for the smooth component forms the basis of the proximal Newton method; see, e.g., \cite{lee2014proximal,kanzow2021globalized}. By omitting the terms that are constant with respect to $x$, the subproblem in \eqref{eq:subproblem_Proximal_Newton} can equivalently be expressed as
\begin{equation}\label{eq:cleanest_subproblem}
x_{k+1}\in\underset{x\in\mathbb{R}^n}{\operatorname{argmin}};
\frac{1}{2}\left\langle x,\nabla^2h(x_k)x\right\rangle
+\left\langle \nabla h(x_k)-\nabla^2h(x_k)x_k,x\right\rangle
+g(x).
\end{equation}

Since the subproblem \eqref{eq:cleanest_subproblem} has a quadratic smooth component and retains the nonsmooth term $g$, it falls within the class of quadratic convex composite problems considered in Section~\ref{sec:lasso_sub_section}. Hence, the NCN algorithm developed in that section can be employed to efficiently solve the proximal Newton subproblem. Incorporating the NCN procedure into the proximal Newton framework yields the Coderivative-based Proximal Newton (CPN) method, which is summarized in Algorithm~\ref{alg:Proximal_Newton}.

\begin{algorithm}
  \caption{A Coderivative-based Proximal Newton (CPN) Method}
    \label{alg:Proximal_Newton}
\begin{algorithmic}[1]
\State \textbf{Input:} $x_0 \in \mathbb{R}^n$, $0<\beta' <1$, $\bar{c}\in(0,0.5)$, $\theta>0$, tol$>0$, $K\in \mathbb{N}$ and all parameters required by Algorithm~\ref{alg:nomonotone-newton}.
 \State $r_{0}=\frac{\|x_{0}-\text{Prox}_{\theta g}(x_{0}-\theta \nabla h(x_{0}))\|}{1+ \|x_{0}\|}$
\While{$k = 0,1,2,\dots K$ \ \text{and} $r_k >\text{tol}$}
    \If{$\nabla f(x_k)=0$}
        \State \textbf{stop}
    \Else
       \State $x^{*}_{\mathrm{sub}} \leftarrow$ Solve the subproblem \eqref{eq:cleanest_subproblem} using the NCN algorithm (Algorithm~\ref{alg:nomonotone-newton}).
    \EndIf
    \State $d_k=x^{*}_{\text{sub}}-x_k$
    \State $\alpha_k = 1$
    \While{$ \varphi(x_k+\alpha_k d_k) \le \varphi(x_k)+\bar{c} \alpha_k (\langle \nabla h(x_k), d_k \rangle+ g(x_k+d_k)- g(x_k))$}
    \State $\alpha_k=\beta^\prime \alpha_{k-1}$
    \EndWhile
    \State Set $x_{k} := x_k + \alpha_k d_k$
    \State $r_{k}=\frac{\|x_{k}-\text{Prox}_{\theta g}(x_{k}-\theta \nabla h(x_{k}))\|}{1+ \|x_{k}\|}$

\EndWhile

\State \textbf{return} $x_k$

\end{algorithmic}
\end{algorithm}

As noted above, Algorithm~\ref{alg:Proximal_Newton} is applicable to problem \eqref{eq:composite_convex_definition} when the smooth component $h$ is nonquadratic. At each iteration, the CPN method constructs the quadratic subproblem \eqref{eq:cleanest_subproblem} based on the second-order information of $h$ at the current iterate and employs the NCN algorithm to solve the resulting quadratic convex composite problem. The solution of this subproblem determines the search direction for the CPN method, while the step size $\alpha_k$ is selected using a backtracking line search procedure adapted from \cite{lee2014proximal}. Thus, the proposed framework combines the proximal Newton approximation of the nonquadratic smooth component with the coderivative-based Newton procedure developed for quadratic convex composite problems.

In the remainder of this section, we specialize the proposed CPN method to the logistic Lasso problem, which is a standard binary classification model with $\ell_1$-regularization. Let $\mathcal{D}\in\mathbb{R}^{m\times n}$ denote the data matrix, let $x\in\mathbb{R}^n$ be the parameter vector, and let $y^{\textnormal{lr}}\in\{-1,1\}^m$ be the corresponding vector of binary labels. The logistic Lasso problem can be formulated as
\begin{equation}\label{eq:logistic_lasso_convex_composite}
    F(x)=h(x)+g(x)
    =\frac{1}{m}\sum_{i=1}^{m}\log(1+e^{-z_i})
    +\mu\|x\|_1,
\end{equation}
where
\[
    z_i=y^{\text{lr}}_i\langle a_i,x\rangle,
\]
and $a_i$ denotes the $i$th row of the matrix $\mathcal{D}$. Here, the smooth component is given by
\[
    h(x)=\frac{1}{m}\sum_{i=1}^{m}\log(1+e^{-z_i}),
\]
and the nonsmooth component is $g(x)=\lambda\|x\|_1$, where $\lambda>0$ is the regularization parameter. The nonquadratic nature of the logistic loss makes this problem a natural setting in which to apply the CPN framework developed above.

To construct the quadratic model required by the CPN method, we first derive the first- and second-order derivatives of the smooth component $h$. The gradient of $h$ is given by
\begin{equation*}
    \nabla h(x)
    =\frac{1}{m}\sum_{i=1}^{m}
    \bigl(\sigma(z_i)-1\bigr)y^{\text{lr}}_i a_i,
    \qquad
    \text{where}\quad
    \sigma(z)=\frac{1}{1+e^{-z}}.
\end{equation*}
Furthermore, the Hessian of $h$ admits the following compact matrix representation:
\[
    \nabla^2 h(x)=\frac{1}{m}\mathcal{D}^{T}W\mathcal{D},
\]
where $W=\operatorname{diag}(w_1,\ldots,w_m)$ is a diagonal matrix whose entries are given by
\[
    w_i=\sigma(z_i)\bigl(1-\sigma(z_i)\bigr),
    \qquad i=1,\ldots,m.
\]
Since $0<w_i\leq 1/4$, the matrix $W$ is positive semidefinite, and consequently $\nabla^2h(x)$ is positive semidefinite for every $x\in\mathbb{R}^n$. Thus, the logistic loss is convex and possesses the second-order structure required by the proximal Newton framework. These expressions allow us to explicitly construct the quadratic approximation of $h$ at each iterate and, consequently, formulate the corresponding quadratic subproblem to which the NCN algorithm can be applied.

\section{Numerical Results}\label{sec:experiments}

In this section, we report numerical experiments to assess the computational performance of the proposed NCN and CPN methods. We consider three well-established machine learning optimization problems: Lasso, SVM, and logistic Lasso. These problems provide representative test cases for evaluating the proposed methods in different convex composite optimization settings.

The experiments were performed under fixed computational environments within each problem class to facilitate a consistent comparison. For the Lasso experiments in Section~\ref{sec:lasso_experimental}, all methods were implemented in MATLAB R2023b and tested on a desktop computer equipped with an Intel Core i7-7700 processor and 32 GB of RAM. For the SVM and logistic Lasso experiments in Sections~\ref{sec:SVM-experimental} and \ref{sec:logistic-lasso-experiment}, respectively, the computations were performed in MATLAB R2018a on a desktop computer equipped with an Intel Core i5-10505 processor and 32 GB of RAM.

\subsection{Lasso}\label{sec:lasso_experimental}

Coderivative-based Newton methods constitute a relatively recent class of algorithms in the optimization literature. The first method of this type was introduced by \cite{mordukhovich2021generalized}. Subsequently, \cite{khanh2024globally} proposed an enhanced variant, referred to as the Coderivative-Based Generalized Regularized Newton Method (GRNM), which employs an Armijo line search strategy to establish global convergence. More recently, \cite{chao2024coderivative} introduced another variant, known as the Generalized Regularized Newton Method with Wolfe line search (GRNM-W), which uses the Wolfe line search strategy as its globalization mechanism.

As a first step, we compare the proposed NCN method with existing coderivative-based Newton methods from the literature on the LASSO problem. The application of the NCN algorithm to the LASSO problem is described in Section~\ref{sec:lasso_sub_section}. For all experiments, we use the KKT residual proposed in \cite{li2018highly} as the stopping criterion. Specifically, the algorithm terminates when $r_k<10^{-6}$, where
\begin{equation}
r_k :=
\frac{\|x^k-\operatorname{Prox}_{\mu \|\cdot\|_1}\big(x^k-\mathcal{A}^\ast(\mathcal{A}x^k-\mathcal{Y})\big)\|}
{1+\|x^k\|+\|\mathcal{A}x^k-\mathcal{Y}\|}.
\end{equation}

\begin{table}[ht]
\centering
\caption{Computational results for coderivative-based Newton methods on the Lasso problem.}
\label{tab:results-coderivative-based-method}
\begin{tabular}{lcccccccc}
\toprule
\multirow{2}{*}{Case $(m,n,\mu)$}
& \multicolumn{4}{c}{Iterations}
& \multicolumn{4}{c}{CPU Time (s)} \\
\cmidrule(lr){2-5}
\cmidrule(lr){6-9}
& GRNM & GRNM-W & TR-NCN & NCN
& GRNM & GRNM-W & TR-NCN & NCN \\
\midrule
$(500,1000,10^{-3})$
& \text{2616} & 2862 & 3956 & 3632
& \text{81.4} & 567.0 & 146.2 & 109.1 \\
$(10^2,10^2,10^{-3})$
& 22 & 20 & 17 & \text{15}
& 0.052 & 0.2 & 0.059 & \text{0.047} \\

$(10^3,10^3,10^{-3})$
& 62 & 123 & 38 & \text{37}
& 1.40 & 9.46 & 0.96 & \text{0.83} \\

$(10^3,10^3,10^{-2})$
& 67 & 70 & 58 & \text{52}
& 1.68 & 6.93 & 1.63 & \text{1.29} \\

$(10^3,10^3,10^{-1})$
& 64 & 64 & \text{61} & 64
& 1.89 & 7.30 & 1.75 & \text{1.68} \\

$(5000,10^3,10^{-3})$
& 6 & \text{5} & 6 & \text{5}
& 0.32 & 0.70 & 0.32 & \text{0.27} \\

$(2500,2500,10^{-3})$
& 81 & 75 & \text{59} & 125
& 14.8 & 71.8 & \text{13.2} & 28.4 \\

$(2500,2500,10^{-2})$
& 83 & 91 & 85 & \text{66}
& 15.8 & 98.1 & 18.3 & \text{12.9} \\

$(2500,2500,10^{-1})$
& 92 & 85 & 103 & \text{70}
& 18.8 & 94.2 & 23.1 & \text{13.8} \\
$(5000,2500,10^{-3})$
& 8 & 7 & 7 & \text{6}
& 2.4 & 4.7 & 2.3 & \text{1.9} \\

$(5000,2500,10^{-2})$
& 9 & 8 & 7 & \text{6}
& 2.7 & 6.3 & 2.3 & \text{1.9} \\

$(5000,2500,10^{-1})$
& 10 & 9 & 11 & \text{7}
& 2.9 & 7.2 & 3.2 & \text{2.1} \\

$(10000,7500,10^{-3})$
& 10 & 9 & 9 & \text{8}
& 43.8 & 89.0 & 42.2 & \text{37.2} \\
$(10000,5000,10^{-3})$
& 7 & \text{6} & 7 & \text{6}
& 11.5 & 23.7 & 12.0 & \text{10.5} \\

$(5000,5000,10^{-3})$
& 93 & 90 & \text{51} & \text{51}
& 93.5 & 602.1 & 53.6 & \text{51.3} \\

$(5000,5000,10^{-2})$
& 137 & 142 & 149 & \text{117}
& 145.9 & 1099.6
& 171.8 & \text{126.0} \\

$(7500,5000,10^{-2})$
& 10 & 9 & 8 & \text{7}
& 14.4 & 30.1
& 12.1 & \text{10.7} \\

$(7500,5000,10^{-1})$
& 14 & 13 & 10 & \text{10}
& 17.9 & 53.8
& 14.3 & \text{13.7} \\

\bottomrule
\end{tabular}

\vspace{2mm}

\end{table}

The NCN algorithm by default employs the adaptive regularization mechanism given in (\ref{eq:Adaptive_reg_rule_linesearch}). We use the notation TR-NCN to denote the variant of the NCN algorithm in which the adaptive regularization mechanism is instead defined according to (\ref{eq:Adaptive_reg_rule_TR}). For both NCN and TR-NCN, the HAN line search strategy (Algorithm~\ref{alg:HAN_linesearch2}) is implemented with the parameters

\[
C_1=0.5,\qquad C_2=0,\qquad \beta=0.5,\qquad M_1=2.
\]
With this choice, the HAN line search initially operates as an Armijo line search and switches to a nonmonotone line search strategy with a memory size of two once the step size falls below $\frac{1}{2}$. This adaptive switching mechanism allows the algorithms to accept larger step sizes when appropriate, thereby reducing unnecessary backtracking and improving the overall computational efficiency of the NCN method.

\begin{table}[ht]
\centering
\caption{Computational comparison of the proposed methods with existing algorithms for the Lasso problem ($\mu=10^{-3}$).}
\label{tab:lasso-well-known-algorithms-comparsion}

\begin{tabular}{lcccccccccc}
\toprule
\multirow{2}{*}{Case $(m,n)$}
& \multicolumn{5}{c}{Iterations}
& \multicolumn{5}{c}{CPU Time (s)} \\
\cmidrule(lr){2-6}
\cmidrule(lr){7-11}
& SSNAL & FISTA & ADMM & TR-NCN & NCN
& SSNAL & FISTA & ADMM & TR-NCN & NCN \\
\midrule

$(10000,7500)$
& 27 & 1135 & 352 & 9 & \text{8}
& 173.2 & 250.1 & 36.02 & 38.3 & \text{33.5} \\

$(10000,5000)$
& 18 & 297 & 462 & 7 & \text{6}
& 23.6 & 23.1 & 23.8 & 12.1 & \text{10.4} \\

$(10000,2500)$
& 12 & 93 & 511 & 6 & \text{5}
& 2.7 & 7.5 & 9.3 & 2.3 & \text{1.9} \\

$(5000,5000)$
& 1312 & 10000 & 5170 & \text{51} & \text{51} 
& 4931.87 & 1860 & 353.9 & 56.6 & \text{53.7} \\

$(5000,2500)$
& 11 & 303 & 331 & 7 & \text{6}
& 3.7 & 8.0 & 4.5 & 2.0 & \text{1.8} \\

$(2500,2500)$
& 317 & 10000 & 3378 & \text{59} & 125
& 521.03 & 506.1 & 64.09 & \text{12.2} & 25.5 \\

$(2500,1000)$
& \text{4} & 180 & 259 & 6 & 6
& 0.71 & 0.68 & 0.75 & 0.30 & \text{0.20} \\

$(1000,1000)$
& \text{25} & 58637 & 540 & 38 & 37
& 6.20 & 59.7 & 1.09 & \text{0.95} & 0.98 \\

$(1000,2500)$
& \text{19} & 10000 & 10000 & 6444 & 6496
& \text{4.38} & 247.4 & 106.8 & 1520 & 1377 \\

$(1000,500)$
& \text{4} & 290 & 2000 & 7 & 6
& 0.08 & 0.10 & 1.51 & 0.10 & \text{0.07} \\

$(500,1000)$
& 18 & 17827 & 2000 & 3956 & 3632
& \text{0.82} & 3.94 & 1.53 & 129.8 & 108.1 \\

\bottomrule
\end{tabular}

\vspace{2mm}

\end{table}

We first compare the proposed NCN and TR-NCN methods with the existing coderivative-based Newton methods GRNM and GRNM-W. Since all four methods belong to the coderivative-based Newton framework, this comparison allows us to assess the effect of the proposed adaptive regularization and HAN line search strategies within a common algorithmic setting. The numerical results are reported in Table~\ref{tab:results-coderivative-based-method}. The results demonstrate that the proposed methods are highly competitive with the existing coderivative-based approaches in terms of both iteration count and CPU time. In particular, NCN achieves the lowest CPU time in the majority of the test instances and also requires the fewest iterations in most cases. TR-NCN is particularly effective for several instances, most notably the $(2500,2500,10^{-3})$ case, where it substantially reduces the number of iterations and CPU time compared with GRNM and GRNM-W. Overall, the results indicate that incorporating the HAN line search strategy together with the adaptive regularization mechanism in (\ref{eq:Adaptive_reg_rule_linesearch}) can significantly improve the practical performance of coderivative-based Newton methods.

The advantage of NCN is particularly evident in terms of computational time. For several test instances, NCN requires substantially less CPU time than GRNM; for example, the computational time is reduced by approximately $40\%$ or more in two cases. Although the iteration counts of NCN and GRNM are sometimes comparable, the lower CPU time of NCN suggests that the proposed line search and regularization mechanisms can reduce the computational overhead associated with each iteration. In contrast, GRNM-W generally requires considerably more CPU time than the other coderivative-based methods, despite often using a comparable number of iterations. This behavior is consistent with the additional computational cost associated with the Wolfe line search procedure.

In the second set of experiments, we compare the proposed NCN method with several well-established algorithms for solving the LASSO problem, namely, the Semismooth Newton Augmented Lagrangian (SSNAL) method\footnote{\url{https://www.polyu.edu.hk/ama/profile/dfsun/}}, the Fast Iterative Shrinkage-Thresholding Algorithm (FISTA)\footnote{\url{https://github.com/dattran24/FISTA-lasso}}, and the Alternating Direction Method of Multipliers (ADMM)\footnote{\url{https://web.stanford.edu/~boyd/papers/admm/lasso/lasso.html}}. The results are reported in Table~\ref{tab:lasso-well-known-algorithms-comparsion}, where the maximum number of iterations is set to $10{,}000$ for all methods. The numerical results demonstrate the strong performance of NCN, particularly for instances with $m\geq n$. In these cases, NCN generally requires substantially fewer iterations and achieves competitive, and often the lowest, CPU times among the compared methods. For example, for the $(10000,7500)$ instance, NCN requires only $8$ iterations and $33.5$ seconds, making it more than five times faster than SSNAL. Similar improvements are observed across several other instances. These results highlight the computational efficiency of the proposed coderivative-based Newton framework for square and overdetermined LASSO problems. Finally, following the experimental design of \cite{khanh2024globally}, we evaluated the performance of the considered methods based on the number of iterations and CPU time, with the data matrix $\mathcal{A}$ randomly generated for all numerical experiments reported in Tables~\ref{tab:results-coderivative-based-method} and~\ref{tab:lasso-well-known-algorithms-comparsion}.

\subsection{Support Vector Machine}\label{sec:SVM-experimental}
The support vector machine is a method for binary classification and can be formulated as the convex composite optimization problem in \eqref{eq:convex_composite_SLBQP}. The FBE allows us to reformulate \eqref{eq:convex_composite_SLBQP} as an unconstrained minimization problem with a $C^{1,1}$ objective function. We therefore apply the NCN algorithm, where the search direction $d_k$ is computed according to the procedure described in Section~\ref{sec:application_svm}. The adaptive regularization parameter is updated according to \eqref{eq:Adaptive_reg_rule_linesearch}, and the parameters of the HAN line search are set to
\[
C_1=0.5,\qquad C_2=0,\qquad \beta=0.5,\qquad M_1=2.
\]

Following the experimental setup in \cite{chao2024coderivative}, we generate a symmetric positive semidefinite matrix \(Q\in\mathbb{R}^{n\times n}\) as

$$
Q=W^{T}W,
$$

where \(W\in\mathbb{R}^{r\times n}\) is generated with i.i.d normal distribution entries, and consequently, \(\operatorname{rank}(Q)\) is expected to be $r$. 

Recently, \cite{chao2024coderivative} proposed a coderivative-based Newton method, referred to as the coderivative-based forward-backward Newton (CNFB) method, for solving the SLBQP problem. The authors compared CNFB with several existing approaches, including Gurobi, MATLAB's \texttt{quadprog}, P2GP \cite{di2018two}, and QPPAL \cite{liang2022qppal}. Their numerical results demonstrate that, in the high-rank case, P2GP achieves the best performance, followed by CNFB, whereas Gurobi, MATLAB, and QPPAL are less efficient. In the low-rank case, Gurobi performs best, followed by MATLAB and CNFB. Despite these variations, CNFB remains competitive in both settings, demonstrating its robustness with respect to the rank of the matrix $Q$.
In view of these observations, we compare the proposed NCN method directly with CNFB, and the results are reported in Table~\ref{tab:results-NCN-sVM}. Across the test instances, NCN requires less computational time than CNFB in both low- and high-rank cases. This improvement can be attributed to the combination of adaptive regularization and the HAN line search employed in the NCN framework.

\begin{table}[ht]

\centering
\caption{Numerical results for the SLBQP problem}
\label{tab:results-NCN-sVM}
\begin{tabular}{lcccccc}
\toprule
\multirow{2}{*}{$\big($Rank (r), Size (n)$\big)$}
& \multicolumn{3}{c}{CNFB}
& \multicolumn{3}{c}{NCN} \\
\cmidrule(lr){2-4}
\cmidrule(lr){5-7}
& Iterations & CPU Time(s) & Residual & Iterations & CPU Time(s) & Residual \\
\midrule

(1000,1000) 
& 22 & 1.04 & 1.91e-8 & 17
& 0.45 & 7.67e-7  \\

(900,1000) 
& 24 & 1.04 & 4.55e-10 & 19 
& 0.56 & 3.41e-11  \\

(500,1000) 
& 41 & 1.15 & 1.36e-8 & 32
& 0.64 & 4.8e-7  \\

(100,1000) 
& 326 & 7.19 & 6.22e-7 & 371
& 5.66 & 5.19e-8  \\
(2000,2000) 
& 24 & 3.33 & 1.44e-10 & 17
& 1.96 & 2.33e-9  \\

(1800,2000) 
& 27 & 3.66 & 1.56e-7 & 21 
& 2.39 & 1.45e-10  \\

(1000,2000) 
& 52 & 5.65 & 4.73e-9 & 42
& 3.63 & 2.25e-7  \\

(200,2000) 
& 527 & 46.30 & 2.80e-7 & 557 
& 34.85 & 2.76e-7  \\

(200,2000) 
& 527 & 46.30 & 2.80e-7 & 557 
& 34.85 & 2.76e-7  \\

(5000,5000) 
& 28 & 31.99 & 2.42e-9 & 18 
& 18.68 & 3.31e-9  \\

(4500,5000) 
&30  & 33.47 & 1.69e-8 &22  
&21.62  & 5.54e-7  \\

(2500,5000) 
&58  & 49.97 & 1.82e-9 & 47 
& 34.35 & 3.14e-8  \\

(500,5000) 
& 959 & 556.41 & 6.03e-7 & 1114 
& 514.01 & 7.36e-10  \\

\bottomrule
\end{tabular}

\vspace{2mm}
\footnotesize

\end{table}

\subsection{Logistic Lasso}\label{sec:logistic-lasso-experiment}

The logistic Lasso problem is a well-established optimization model for binary classification. It can be formulated as a composite convex optimization problem of the form (\ref{eq:composite_convex_definition}). In contrast to the standard Lasso problem, the smooth component $h(x)$ in the logistic Lasso formulation is nonquadratic. To address this structure, we apply the CPN method (Algorithm~\ref{alg:Proximal_Newton}), as described in Section~\ref{sec:logistic_lasso_application}. Because the CPN method employs the NCN method as its inner solver, we specify the parameter settings used for the latter. In particular, the NCN method incorporates the adaptive regularization strategy in (\ref{eq:Adaptive_reg_rule_linesearch}), and the parameters of the HAN line search procedure are set to
\[
C_1 = 0.5, \qquad C_2 = 0, \qquad \beta = 0.5, \qquad M_1 = 2.
\]

To assess the computational efficiency of the proposed CPN method, we compare its performance with several well-established optimization methods from the literature, including FISTA\footnote{\url{https://github.com/Sina14KD/FISTA_Logistic_Lasso}}, SpaRSA\footnote{\textcolor{black}{We use the SpaRSA solver implemented in MATLAB's fitclinear function.}}, and PNOPT\footnote{\url{https://github.com/yuekai/PNOPT}}. These methods provide representative benchmarks for evaluating the effectiveness and computational performance of the CPN method on logistic Lasso problems.

\begin{table}[ht]

\centering
\caption{Numerical results for the logistic Lasso problem ($\mu=10^{-3}$)}
\label{tab:results-CPN-logistic}
\begin{tabular}{lcccccccc}
\toprule
\multirow{2}{*}{Case (m,n)}
& \multicolumn{4}{c}{Iterations (Res)}
& \multicolumn{4}{c}{CPU Time (s)} \\
\cmidrule(lr){2-5}
\cmidrule(lr){6-9}
& FISTA & SpaRSA & PNOPT & CPN
& FISTA & SpaRSA & PNOPT & CPN \\
\midrule

$(10^3,500)$ 
& 807 (T) & 98 (F) & 60 (T) & \text{8} (T)
& 1.36 & \text{0.73} & 1.92 & \text{1.46} \\

$(10^4,500)$ 
& 73 (T) & 22 (F) & 12 (T) & \text{7} (T)
& 1.99 & 1.37 & \text{0.82} & \text{0.96} \\

$(10^4,250)$ 
& 87 (T) & 16 (F) & 11 (F) & \text{6} (T)
& 1.16 & 0.76 & \text{0.48} & \text{0.37} \\

$(10^4,100)$ 
& 57 (T) & 13 (F) & 10 (F) & \text{6} (T)
& 0.45 & 0.76 & 0.40 & \text{0.23} \\

$(10^5,500)$ 
& 68 (T) & 11 (F) & 10 (F) & \text{6} (T)
& 17.1 & \text{1.28} & \text{1.81} & \text{2.97} \\

$(10^5,250)$ 
& 66 (T) & 10 (F) & 11 (T) & \text{6} (T)
& 8.07 & \text{1.05} & \text{1.21} & \text{1.07} \\

$(10^5,100)$
& 42 (T) & 9 (T) & 9 (T) & \text{5} (T)
& 2.38 & 0.83 & 0.62 & \text{0.45} \\

$(10^6,250)$ 
& 76 (T) & 11 (T) & 10 (T) & \text{6} (T)
& 90.9 & \text{4.41} & \text{8.13} & \text{9.76} \\

$(10^6,100)$ 
& 40 (T) & 8 (F) & 9 (F) & \text{5} (T)
& 56.7 & \text{1.89} & \text{3.17} & \text{3.04} \\

$(10^6,50)$ 
& 57 (T) & 11 (T) & 10 (T) & \text{6} (T)
& 13.5 & \text{1.68} & \text{2.24} & \text{1.90} \\

$(10^7,50)$ 
& 79 (T) & 11 (T) & 11 (T) & \text{7} (T)
& 179.1 & \text{10.5} & \text{23.4} & \text{23.8} \\

$(10^7,25)$ 
& 82 (T) & 11 (T) & 11 (T) & \text{6} (T)
& 87.7 & \text{7.09} & \text{11.4} & \text{11.0} \\

\textnormal{australian scale} 
& 806 (T) & 85 (F) & 31 (T) & \text{6} (T)
& 0.26 & 0.72 & 1.38 & \text{0.11} \\

covtype 
& 818 (T)& - & - & \text{8} (T)
& 144.2 & - & - & \text{8.82} \\

a1a test 
& 963 (T)& 158 (F)& 33 (T)& \text{7} (T)
& 8.33 & 0.92 & 2.34 & \text{0.90} \\

\bottomrule
\end{tabular}

\vspace{2mm}
\footnotesize
\textbf{Note:} Res is a Boolean variable, defined as \(T\) if \(r_{\text{pp}}<10^{-6}\) and \(F\) otherwise, where \(r_{\text{pp}}\) is defined in (\ref{eq:kkt_res_logistic_post_evaluation}). The symbol ``--'' indicates that the algorithm fails to find a minimizer.

\end{table}

Table~\ref{tab:results-CPN-logistic} presents a comparison of the four algorithms for the logistic lasso problem. For each case, we report the number of iterations and the CPU time required by each method.
Although all methods use a residual tolerance of $10^{-6}$, they employ different definitions of the residual. To provide a consistent comparison, we evaluate the final solutions using the \textit{post-processed} residual
\begin{equation}\label{eq:kkt_res_logistic_post_evaluation}
r_{\text{pp}} =
\frac{\left\|x-\textnormal{Prox}_{\frac{1}{L}g}
\left(x-\frac{1}{L}\nabla h(x)\right)\right\|_2}
{1+\|x\|_2},
\end{equation}
where $\mathcal{D}$ is the data matrix,
$L=\frac{1}{4N}\|\mathcal{D}\|_2^2$, and $h$ and $g$ are defined in~(\ref{eq:logistic_lasso_convex_composite}). In Table~\ref{tab:results-CPN-logistic}, T indicates that the post-processed residual satisfies $r_{\text{pp}}<10^{-6}$, while F indicates otherwise. For all cases except the last three, the data matrix $\mathcal{D}$ and the label vector $y^{\text{lr}}$ are generated randomly, whereas the last three cases are taken from the LIBSVM dataset collection~\cite{chang2011libsvm}. The regularization parameter is fixed at $\mu=10^{-3}$ for all cases.

The numerical results presented in Table~\ref{tab:lasso-well-known-algorithms-comparsion} show that the NCN algorithm performs well when the number of features is smaller than the number of samples. Since the CPN method employs the NCN algorithm as its inner solver, we therefore focus on cases in which the number of samples is larger than, or significantly larger than, the number of features. These cases, along with the performance results of the considered algorithms, are presented in Table~\ref{tab:results-CPN-logistic}. The numerical results indicate that CPN and FISTA are the only methods that consistently obtain high-accuracy solutions across all tested cases, as measured by the post-processed residual criterion $r_{\text{pp}}<10^{-6}$, where $r_{\text{pp}}$ is defined in (\ref{eq:kkt_res_logistic_post_evaluation}).  Among these two methods, CPN requires fewer iterations to attain the prescribed accuracy and exhibits faster convergence than FISTA. In general, among the four methods considered, CPN requires the fewest iterations to converge to the optimal solution. Also, in terms of CPU time, CPN demonstrates competitive performance compared with the SpaRSA and PNOPT methods. Overall, these results suggest that CPN can achieve high solution accuracy while maintaining a relatively low computational cost, as reflected in both the number of iterations and the CPU time.

\section{Conclusion}\label{sec:conclusion}

Newton-type methods can achieve fast local convergence by using second-order information. However, in different applications, including statistics and machine learning, the objective functions may be \(C^{1,1}\) or second-order nonsmooth, and classical Hessians may not exist. In this work, we proposed the NCN method, which uses a generalized Hessian based on the coderivative of the gradient mapping. The NCN method combines an adaptive regularization strategy with the proposed HAN line search to improve its global performance. We applied NCN to convex composite problems with a quadratic smooth component, including SVM and Lasso. The numerical results show that NCN outperforms the existing coderivative-based Newton method considered in our experiments and provides competitive performance compared with established first- and second-order methods. We also proposed the CPN method for convex composite problems with a nonquadratic smooth component. Numerical results for logistic Lasso show that CPN is competitive with existing first- and second-order methods. We further developed a general convergence framework for coderivative-based Newton methods. Under suitable assumptions, the framework establishes global convergence and superlinear local convergence. It also covers methods that use inexact second-order information. Several directions remain for future work. It would be useful to develop semistochastic and lazy variants of NCN to reduce the computational cost of second-order information while maintaining fast local convergence. Another direction is to study coderivative-based Newton methods when both first- and second-order information are inexact. Finally, extending the proposed methods and convergence framework to nonconvex problems is an interesting topic for future research.

\textbf{Code availability:} The code that supports the findings of this study is available from the corresponding author upon reasonable request.

\newpage
\appendix
\section{Appendix: The HAN line search method}\label{appendix:HAN}

Section~\ref{sec:HAN} presented the motivation and main ideas of the HAN line search. We now turn to its implementation and convergence analysis. The complete implementation is summarized in Algorithm~\ref{alg:HAN_linesearch2}. The algorithm is governed by two switching thresholds, $C_1$ and $C_2$, satisfying
$C_1 \ge C_2 \ge 0.$ Starting from the initial step size $\alpha_k=1$, the HAN line search adaptively switches among three line search strategies according to the magnitude of the current step size.

When $C_1\le \alpha_k\le 1$, the HAN line search behaves as the classical Armijo line search. Specifically, the trial step size is accepted if it satisfies
\begin{equation}\label{eq:Armijo_condition}
f(x_k+\alpha_k d_k)\le
f(x_k)+c\alpha_k\langle\nabla f(x_k),d_k\rangle,
\end{equation}
where $c\in(0,1)$. Otherwise, the step size is reduced according to $
\alpha_k\leftarrow\beta\alpha_k,$
where $0<\beta<1$, and the Armijo condition (\ref{eq:Armijo_condition}) is tested again. If no step size satisfying $\alpha_k>C_1$ fulfills the Armijo condition, the algorithm switches to the nonmonotone Armijo line search. In this stage, the step size continues to be reduced by the factor $\beta$ until it satisfies
\begin{equation}\label{eq:nonmonotone-condition-HAN}
f(x_k+\alpha_kd_k)
\le
F_k+c\alpha_k\langle\nabla f(x_k),d_k\rangle,
\end{equation}
where $
F_k:=\max_{j=0,1,\ldots,M}f(x_{k-j}), $
with $M=M_1$. If no step size satisfying $\alpha_k>C_2$ fulfills the above condition (\ref{eq:nonmonotone-condition-HAN}), the algorithm proceeds to the final stage.

In the final stage, the HAN line search switches to a Wolfe-type line search. The trial step size $\alpha_k$ is accepted if it satisfies \eqref{eq:nonmonotone-condition-HAN} with $M=M_2$, together with the curvature condition
\begin{equation}\label{eq:wolf_condition_HAN}
\langle\nabla f(x_k+\alpha_kd_k),d_k\rangle
\ge
c^\prime\langle\nabla f(x_k),d_k\rangle,
\end{equation}
where $c^\prime \in (c,1)$.

The HAN line search provides a unified framework that encompasses several classical line search strategies through appropriate choices of its algorithmic parameters. In particular, the Armijo line search is obtained by setting $C_1=C_2=0$, while the nonmonotone Armijo line search is recovered by choosing $C_1=1$ and $C_2=0$. Furthermore, when $C_1=C_2=1$, the HAN line search becomes a Wolfe-type line search, which reduces to the classical Wolfe line search for $M_2=0$ and to its nonmonotone extension for $M_2>0$.

The following lemma establishes sufficient conditions for the global convergence of optimization methods employing the HAN line search as their globalization strategy.

\begin{algorithm}
  \caption{HAN line search with Implementation Details}
  \label{alg:HAN_linesearch2}
\begin{algorithmic}[1]

\State \textbf{Input:} Given $d\in\mathbb{R}^n$, $C_1\geq C_2\geq0$, 
$M_1,M_2\geq0$, $S\geq0$, $c\in(0,1)$, and $c'\in(c,1)$.

\State \textbf{Initialize:} $\alpha=1$, $0<\beta<1$, $\gamma>1$, $s=0$,
and $\text{accepted}=\text{False}$.

\While{\text{accepted} = \text{False}}

    \If{$\alpha>C_1$}
        \If{$\alpha$ satisfies the Armijo condition~(\ref{eq:Armijo_condition})}
            \State \textbf{return} $\alpha$
        \Else
            \State $\alpha\leftarrow\beta\alpha$
        \EndIf

    \ElsIf{$\alpha>C_2$}
        \If{$\alpha$ satisfies the nonmonotone condition~(\ref{eq:nonmonotone-condition-HAN}) with $M=M_1$}
            \State \textbf{return} $\alpha$
        \Else
            \State $\alpha\leftarrow\beta\alpha$
        \EndIf

    \Else
        \If{$\alpha$ satisfies the nonmonotone condition~(\ref{eq:nonmonotone-condition-HAN}) with $M=M_2$}
    \If{$\alpha$ satisfies the curvature condition~(\ref{eq:wolf_condition_HAN})}
        \State \textbf{return} $\alpha$
    \ElsIf{$s<S$}
        \State $\alpha\leftarrow\gamma\alpha$
        \State $s\leftarrow s+1$
    \Else
        \State \textbf{return} $\alpha$
    \EndIf
\Else
    \State $\alpha\leftarrow\beta\alpha$
\EndIf
\EndIf
\EndWhile
\end{algorithmic}
\end{algorithm}

\begin{lemma}[\textbf{Global convergence under the HAN line search}]\label{lemma:HAN_convergence}
    Let $f:\mathbb{R}^n \to \mathbb{R}$ be of $C^{1,1}$ functions. Also, let $d_k$ be the search direction at iteration $k$ satisfy the following conditons:
    \begin{condition}\label{condition:decent_direction}
           Decent direction condition: $d_k$ satisfies $\langle -\nabla f(x_k), d_k \rangle >0$ when $\nabla f(x_k) \neq 0$. 
    \end{condition}
    \begin{condition}\label{condition:Angle_conditon}
        Angle condition: $d_k$ satisfies $\langle -\nabla f(x_k),d_k \rangle \ge \sigma_1\|\nabla f(x_k)\|^{\gamma}\|d_k\|^{\zeta}$ for constant $\sigma_1>0$, $\gamma > 0$, and $\zeta >0$.
    \end{condition}
    \begin{condition}\label{condition:bounded_direction}
        Bounded direction condition when $\nabla f(x_k) \neq 0$: Under the assumption that $\|\nabla f(x_k)\| > \epsilon$ for some $\epsilon > 0$ and for all $k \in \mathbb{N}$, $d_k$ satisfies $
\|d_k\| \le \sigma_2 \|\nabla f(x_k)\|,$ where $\sigma_2 >0$.

    \end{condition}

    Then, if we updating  $x_{k+1}=x_k+\alpha_k d_k$ where the step size $\alpha_k>0$ satisfies:
    \begin{condition}\label{condition:Nonmonotone_linesearch}
         Nonmonotone line search which is:
    \begin{equation}\label{eq:A4_condition_HAN_convergence}
        f(x_k+\alpha_kd_k) \le F_k+c \alpha_k \langle \nabla f(x_k),d_k \rangle, \textnormal{ for } c\in(0,1).
    \end{equation}
    where $F_k:=\max_{j=0,1,\dots,M} f(x_{k-j})$.
    \end{condition}

Then, $\lim_{k \to \infty} \|\nabla f(x_k)\|=0$.
   
\end{lemma}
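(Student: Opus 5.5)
We follow the classical Grippo--Lampariello--Lucidi nonmonotone line search analysis, adapted to the angle condition ($\mathcal{B}$\ref{condition:Angle_conditon}) and the bounded-direction condition ($\mathcal{B}$\ref{condition:bounded_direction}). We implicitly assume, as in Proposition~\ref{prop:global_convergent_nonmonotone}, that the level set $\Omega$ is bounded, so $f$ is bounded below along the iterates, and that $\nabla f$ is Lipschitz with modulus $l$ on a neighborhood of $\Omega$.

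\textbf{Step 1.} Argue by contradiction: suppose $\|\nabla f(x_k)\|\not\to 0$. The first goal is a uniform lower bound on the steps. For the Armijo/nonmonotone Armijo stages, any backtracked step $\alpha_k<1$ means $\alpha_k/\beta$ failed the Armijo test, hence also the weaker nonmonotone test since $F_k\ge f(x_k)$. Combining this with the descent lemma $f(x_k+td_k)\le f(x_k)+t\langle\nabla f(x_k),d_k\rangle+\tfrac{l}{2}t^2\|d_k\|^2$ gives
\[
\alpha_k\ge \frac{2\beta(1-c)}{l}\,\frac{\langle-\nabla f(x_k),d_k\rangle}{\|d_k\|^2}.
\]
For the Wolfe stage, the curvature condition \eqref{eq:wolf_condition_HAN} together with Lipschitz continuity yields $l\alpha_k\|d_k\|^2\ge (1-c')\langle-\nabla f(x_k),d_k\rangle$. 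The only exception is the return after $S$ expansions, where $\alpha_k$ is at least the last accepted backtracked step, so the Armijo-type bound still applies. In every case,
\[
\alpha_k\langle-\nabla f(x_k),d_k\rangle\ge \sigma\,\frac{\langle-\nabla f(x_k),d_k\rangle^2}{\|d_k\|^2}
\]
for some constant $\sigma>0$, unless $\alpha_k=1$.

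\textbf{Step 2.} Let $\ell(k)$ be the index attaining $F_k$. The standard argument shows that $\{F_k\}=\{f(x_{\ell(k)})\}$ is nonincreasing, so all iterates lie in the level set and $F_k\downarrow F^*>-\infty$. From
\[
f(x_{\ell(k)})\le f(x_{\ell(\ell(k)-1)})-c\,\alpha_{\ell(k)-1}\langle-\nabla f,d\rangle_{\ell(k)-1},
\]
we get $\alpha_{\ell(k)-1}\langle-\nabla f,d\rangle_{\ell(k)-1}\to0$. By Step 1 together with conditions ($\mathcal{B}$\ref{condition:Angle_conditon}) and ($\mathcal{B}$\ref{condition:bounded_direction}), this gives $\|\alpha_{\ell(k)-1}d_{\ell(k)-1}\|\to0$. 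Indeed, if $\|\nabla f(x_k)\|>\epsilon$, then the angle condition gives $\langle-\nabla f,d\rangle\ge\sigma_1\epsilon^\gamma\|d\|^\zeta$, and the bounded-direction condition bounds $\|d_k\|$ above, so a small product forces $\alpha\|d\|$ small. Induction on $j=1,\dots,M+1$, as in Grippo et al., then uses uniform continuity of $f$ on the compact level set to show $f(x_{\ell(k)-j})\to F^*$ and $\alpha_{\ell(k)-j}\langle-\nabla f,d\rangle\to0$. Since $k-\ell(k)\le M$, this transfers to $f(x_k)\to F^*$ and $\alpha_k\langle-\nabla f(x_k),d_k\rangle\to0$ along the whole sequence.

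\textbf{Step 3.} Combining $\alpha_k\langle-\nabla f(x_k),d_k\rangle\to0$ with Step 1, on the subsequence where $\|\nabla f(x_k)\|>\epsilon$ we have either $\langle-\nabla f(x_k),d_k\rangle\to 0$ (when $\alpha_k=1$) or $\langle-\nabla f(x_k),d_k\rangle/\|d_k\|\to0$. In both cases the angle condition forces $\|d_k\|\to0$. The angle condition then gives $\epsilon^\gamma\|d_k\|^{\zeta}\lesssim\|\nabla f(x_k)\|\,\|d_k\|$, which is consistent only if $\zeta\le1$ or if a lower bound on $\|d_k\|$ is available.

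\textbf{Main obstacle.} This last step is the delicate point. Conditions ($\mathcal{B}$\ref{condition:Angle_conditon})--($\mathcal{B}$\ref{condition:bounded_direction}) as stated give no lower bound on $\|d_k\|$ in terms of $\|\nabla f(x_k)\|$. For the NCN direction, however, one has $\|\nabla f(x_k)\|\le(\|\omega_k\|/\|d_k\|+\mu_k\lambda_{\max})\|d_k\|\le(l+\mu_k\lambda_{\max})\|d_k\|$, using the bound $\|\omega\|\le l\|d\|$ from Theorem~3.3 of \cite{mordukhovich2018variational}. We would add this bound (or derive it) to conclude $\|d_k\|\ge\|\nabla f(x_k)\|/C\ge\epsilon/C$. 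This contradicts $\|d_k\|\to0$ and proves $\|\nabla f(x_k)\|\to0$.
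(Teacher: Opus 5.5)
Your skeleton is the paper's: a lower bound on $\alpha_k$ coming from the line search, nonmonotone decrease forcing $\alpha_k\langle-\nabla f(x_k),d_k\rangle\to0$, and a contradiction through a lower bound on $\|d_k\|$. Your Steps~1--2 are more careful than the paper's proof in three places. First, the paper only shows that \emph{some} acceptable step exceeds $\alpha_{\min}$, but ($\mathcal{B}$\ref{condition:Nonmonotone_linesearch}) alone does not bound the chosen $\alpha_k$ below; you rightly use the backtracking and curvature mechanics of HAN. Second, the paper obtains its bound by using $-\|\nabla f(x_k)\|\|d_k\|\le\langle\nabla f(x_k),d_k\rangle$ in the wrong direction. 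Third, it passes from $F_k\to f_{\inf}$ to $f(x_k)\to f_{\inf}$ without any Grippo--Lampariello--Lucidi argument.

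The obstacle you flag is genuine. The paper closes it by asserting that ($\mathcal{B}$\ref{condition:decent_direction}) gives $\|d_k\|>\hat\epsilon$, which does not follow, and in fact the lemma as stated is false for $\zeta\ge1$. Take $f(x)=\frac12\|x\|^2$, $\|x_0\|=1$, and $d_k=-\tau_k\nabla f(x_k)$ with $\tau_k>0$ summable and $\tau_k\le\min\{\frac12,2(1-c)\}$. Then ($\mathcal{B}$\ref{condition:decent_direction})--($\mathcal{B}$\ref{condition:bounded_direction}) hold with $\gamma=1$ and $\sigma_1=\sigma_2=1$, and the unit step passes the Armijo (hence the nonmonotone) test. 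Yet $x_k\to\prod_j(1-\tau_j)\,x_0\neq0$. So an additional hypothesis is unavoidable. Your NCN bound $\|\nabla f(x_k)\|\le(l+\mu_k\lambda_{\max}(R))\|d_k\|$, which follows from $\|\omega\|\le l\|d\|$ for $\omega\in D^\ast\nabla f(x)(d)$ with $\mu_k$ bounded on $\Omega$, is the right repair. Incidentally, the dichotomy at the end of your Step~3 is reversed. For $\zeta<1$ the angle condition together with ($\mathcal{B}$\ref{condition:bounded_direction}) already yields the contradiction, while for $\zeta\ge1$, including the paper's $\zeta=2$, the lower bound on $\|d_k\|$ is indispensable.

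The gap you do not flag is in Step~2. The Grippo--Lampariello--Lucidi induction needs $\alpha_j\|d_j\|\to0$ for \emph{every} index $j=\ell(k)-i$, $1\le i\le M+1$. Under your hypothesis $\|\nabla f(x_k)\|\not\to0$, only a subsequence satisfies $\|\nabla f(x_k)\|>\epsilon$, and your justification covers only those indices. Where $\|\nabla f(x_j)\|$ is small, ($\mathcal{B}$\ref{condition:Angle_conditon}) degenerates through the factor $\|\nabla f(x_j)\|^{\gamma}$ and ($\mathcal{B}$\ref{condition:bounded_direction}) is unavailable. So $\alpha_j\langle-\nabla f(x_j),d_j\rangle\to0$ does not force $\alpha_j\|d_j\|\to0$, and neither $f(x_k)\to F^\ast$ nor $\alpha_k\langle-\nabla f(x_k),d_k\rangle\to0$ along the whole sequence is established. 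The NCN lower bound on $\|d_k\|$ does not help at such indices.

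You have two ways out. You can argue under the hypothesis $\|\nabla f(x_k)\|>\epsilon$ for all $k$; your argument then goes through but proves only $\liminf_{k\to\infty}\|\nabla f(x_k)\|=0$, which is also all the paper's contradiction hypothesis addresses. Or you can add a relation valid at every iteration, such as $\langle-\nabla f(x_k),d_k\rangle\ge c_0\|d_k\|^{r}$ with $c_0>0$ and $r\ge1$. This gives $(\alpha_j\|d_j\|)^r\le c_0^{-1}\bar\alpha^{\,r-1}\alpha_j\langle-\nabla f(x_j),d_j\rangle$, where $\bar\alpha$ is an upper bound on the steps. For the NCN direction with $p\in(0,1)$, such a relation follows from $\|d_k\|\le\|\nabla f(x_k)\|^{1-p}/t$ and ($\mathcal{B}$\ref{condition:Angle_conditon}) with $(\gamma,\zeta)=(p,2)$. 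Namely, $\langle-\nabla f(x_k),d_k\rangle\ge t^{1/(1-p)}\|d_k\|^{(2-p)/(1-p)}$ with $t=\eta_1 q\lambda_{\min}(R)$. For $p=1$ this derivation breaks down, and a separate argument is needed.
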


\begin{proof}
    In the first step of the proof, we show that in each iteration there exists a positive step size (greater than a specific lower bound) satisfying (\ref{eq:A4_condition_HAN_convergence}), or
    \begin{equation}\label{eq:alpha_k_has_lower_bound}
        \alpha_k \ge \alpha_{\min}>0 \textnormal{ for } k\in \mathbb{N}.
    \end{equation}
   when $\|\nabla f(x_k)\|\neq0$ or $\|\nabla f(x_k)\|>\epsilon$. By defenition, $F_k:=\max_{j=0,1,\dots,M} f(x_{k-j})$, we use Lipschitz continuity of $\nabla f(x_k)$ (with constant $L$) to obtain the following estimate for the value of the function in the next iteration:
    \begin{equation}
    f(x_k+\alpha_k d_k) \le f(x_k)+\alpha_k \langle \nabla f(x_k),d_k \rangle+\frac{L}{2}\alpha_k^2\|d_k\|^2 \le F_k+\alpha_k \langle \nabla f(x_k),d_k \rangle+\frac{L}{2}\alpha_k^2\|d_k\|^2 .
    \end{equation}
    Now it is enough to show that there exists $\alpha_k>0$ such that the following inequality holds:
    \begin{equation} \label{eq:finding_alpha_k_in_A4_estimation_1}
        F_k+\alpha_k \langle \nabla f(x_k),d_k \rangle +\frac{L}{2}\alpha_k^2 \|d_k\|^2 \le F_k+c\alpha_k\langle \nabla f(x_k)
    ,d_k \rangle.
    \end{equation}
    Using (\ref{eq:finding_alpha_k_in_A4_estimation_1}), we find that $\alpha_k$ should satisfy the following inequality:
    \begin{equation}
        \alpha_k \le \frac{2(c-1)\langle \nabla f(x_k),d_k \rangle}{L\|d_k\|^2}. 
    \end{equation}
    Considering $- \| \nabla f(x_k) \| \|d_k\| \le \langle \nabla f(x_k) ,d_k \rangle$ and ($\mathcal{B}\ref{condition:bounded_direction}$); if we choose $\alpha_k \in (0, \frac{2(1-c)}{L \sigma_2}]$ we have: 
    \begin{equation}
        \alpha_k \le \frac{2(1-c)}{L \sigma_2}= \frac{2(1-c)\|\nabla f(x_k)\| \|d_k\|}{L \sigma_2 \|\nabla f(x_k)\|\|d_k\|}\le  \frac{2(c-1)\langle \nabla f(x_k),d_k \rangle}{L\|d_k\|^2}.
    \end{equation}
    Therefore, for each iteration, there exits $\alpha_k \ge \alpha_{\min}>0$ where $\alpha_{\min}:= \frac{2(1-c)}{L c^2}\beta$ (for $\beta \in (0,1)$ as backtraking paramer) such that (\ref{eq:finding_alpha_k_in_A4_estimation_1}) and therefore (\ref{eq:A4_condition_HAN_convergence}) are satisfied.

    In the next part, we back to the proof of the Final statement.  Using ($\mathcal{B}$\ref{condition:Nonmonotone_linesearch}) as well as ($\mathcal{B}$\ref{condition:Angle_conditon}), we have
    \begin{equation}\label{eq:HAN_convergence_gap_lower_bound}
        F_k-f(x_k+\alpha_kd_k)\ge c\alpha_k \langle -\nabla f(x_k),d_k \rangle \ge \sigma_1 c \alpha_k \|\nabla f(x_k)\|^{\gamma}\|d_k\|^{\zeta} \ge0. 
    \end{equation}
We know, when $\nabla f(x_k) \neq0$ , $f(x_{k+1}) < F_k$, so $\{F_k\}_{k\in \mathbb{N}}$ is a non-constant, monotone non-increasing sequence. Since $f$ is bounded below, by monotone convergence theorem we obtain $F_k \to f_{\inf}$; also, due to $f(x_k) \le F_k$ we have $\lim_{k \to \infty} f(x_k)=f_{\inf}$. Using (\ref{eq:HAN_convergence_gap_lower_bound}), we conclude
\begin{equation}\label{eq:HAN_convergence_limit_gap_bound}
\lim_{k \to \infty} \sigma_1 c \alpha_k \|\nabla f(x_k)\|^{\gamma}\|d_k\|^{\zeta}=0.
\end{equation}
To show that (\ref{eq:HAN_convergence_limit_gap_bound}) indicates $\lim_{k \to \infty} \|\nabla f(x_k)\|=0$, we use condratiction. Suppose that $\|\nabla f(x_k)\|>\epsilon$ for all $k\in \mathbb{N}$. By ($\mathcal{B}$\ref{condition:decent_direction}) we get that $\|d_k\|>\hat{\epsilon}$ for some $\hat{\epsilon}>0$. So, (\ref{eq:HAN_convergence_limit_gap_bound}) implies that $\lim_{k \to \infty} \alpha_k=0$. This is contradition to  (\ref{eq:alpha_k_has_lower_bound}) which has already been proved, and therefore $\lim_{k \to \infty} \|\nabla f(x_k)\|=0$.

\end{proof}

More generally, the preceding analysis provides a unified convergence framework that is independent of the particular optimization algorithm. Specifically, any descent method whose search directions satisfy Conditions $\mathcal{B}$\ref{condition:decent_direction}--\ref{condition:bounded_direction} and whose step sizes are generated by the HAN line search satisfying Condition $\mathcal{B}$\ref{condition:Nonmonotone_linesearch} is globally convergent. This demonstrates that the HAN line search is not tied to the proposed Newton-type method, but can be employed as a general globalization strategy for a broad class of descent algorithms.

\section{Appendix: Required Propositions}

This appendix presents results concerning the acceptance of unit step sizes in the HAN line search. These results are used to establish the local convergence properties of the NCN method.

\begin{proposition}
\label{prop:unit_stepsize_semismooth}
Suppose that a function $f : \mathbb{R}^n \to \mathbb{R}$ is $C^1$-smooth around $\bar{x}$ with $\nabla f(\bar{x}) = 0$, and that $\nabla f$ is semismooth at this point. Let a sequence $\{x_k\}$ converge to $\bar{x}$ with $x_k \neq \bar{x}$ for all $k \in \mathbb{N}$, and let a sequence $\{d_k\}$ satisfy
\begin{equation}
\|x_k + d_k - \bar{x} \|= o(\|x_k - \bar{x}\|).
\end{equation}
Suppose further that there exists $\kappa > 0$ such that
\[
\langle \nabla f(x_k), d_k \rangle \le -\kappa^{-1} \|d_k\|^2+o(\|d_k\|^2)
\]
for all sufficiently large $k \in \mathbb{N}$. Then, for $F_k:=\max_{j=0,1,\dots,M} f(x_{k-j})$ and $c \in (0,1/2)$, we have
\[
f(x_k + d_k) \le F_k + c \langle \nabla f(x_k), d_k \rangle
\]
for all sufficiently large $k$.
\end{proposition}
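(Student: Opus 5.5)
Since $f(x_k)\le F_k$, it suffices to prove the monotone statement $f(x_k+d_k)\le f(x_k)+c\langle\nabla f(x_k),d_k\rangle$ for all large $k$. The nonmonotone inequality then follows immediately. The monotone version is the classical unit-step acceptance result for semismooth Newton-type methods, and I would reprove it directly.

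First I would record the size relations. From $\|x_k+d_k-\bar x\|=o(\|x_k-\bar x\|)$ and the triangle inequality we get $\|d_k\|/\|x_k-\bar x\|\to1$. Hence $o(\|x_k-\bar x\|)=o(\|d_k\|)$, and $d_k\neq0$ for large $k$. Set $e_k:=x_k-\bar x$ and $w_k:=x_k+d_k-\bar x$, so that $\|w_k\|=o(\|d_k\|)$.

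The key step is a second-order expansion of $f$ around $\bar x$ using semismoothness of $\nabla f$ at $\bar x$. For $C^1$ functions with semismooth gradient at $\bar x$ and $\nabla f(\bar x)=0$, one has
\[
f(y)=f(\bar x)+\tfrac12\langle \nabla f(y),y-\bar x\rangle+o(\|y-\bar x\|^2).
\]
To prove it, write
\[
f(y)-f(\bar x)=\int_0^1\langle\nabla f(\bar x+t(y-\bar x)),y-\bar x\rangle\,dt .
\]
Semismoothness implies directional differentiability with $\nabla f(\bar x+h)=\nabla f(\bar x)+(\nabla f)'(\bar x;h)+o(\|h\|)$, and the directional derivative is positively homogeneous. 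The integrand is therefore $t\langle(\nabla f)'(\bar x;y-\bar x),y-\bar x\rangle+o(\|y-\bar x\|^2)$. Integrating gives $\tfrac12\langle(\nabla f)'(\bar x;y-\bar x),y-\bar x\rangle+o(\cdot)$, which in turn equals $\tfrac12\langle\nabla f(y),y-\bar x\rangle+o(\cdot)$.

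I would apply this expansion at $y=x_k$ and at $y=x_k+d_k$. The second application gives $f(x_k+d_k)-f(\bar x)=O(\|\nabla f(x_k+d_k)\|\|w_k\|)+o(\|w_k\|^2)=o(\|d_k\|^2)$, using local Lipschitz continuity of $\nabla f$ near $\bar x$. (Semismoothness is assumed for a locally Lipschitz map, so $\|\nabla f(x_k+d_k)\|\le L\|w_k\|$.) The first application gives $f(x_k)-f(\bar x)=\tfrac12\langle\nabla f(x_k),e_k\rangle+o(\|e_k\|^2)$. Writing $e_k=-d_k+w_k$ and noting $\|\nabla f(x_k)\|\le L\|e_k\|$, this becomes $f(x_k)-f(\bar x)=-\tfrac12\langle\nabla f(x_k),d_k\rangle+o(\|d_k\|^2)$. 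Subtracting the two expansions,
\[
f(x_k+d_k)-f(x_k)=\tfrac12\langle\nabla f(x_k),d_k\rangle+o(\|d_k\|^2).
\]

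Finally,
\[
f(x_k+d_k)-f(x_k)-c\langle\nabla f(x_k),d_k\rangle=(\tfrac12-c)\langle\nabla f(x_k),d_k\rangle+o(\|d_k\|^2).
\]
By the hypothesis $\langle \nabla f(x_k), d_k \rangle \le -\kappa^{-1} \|d_k\|^2+o(\|d_k\|^2)$ and $c<\tfrac12$, the right-hand side is at most $-(\tfrac12-c)\kappa^{-1}\|d_k\|^2+o(\|d_k\|^2)$. This is negative for large $k$. Combined with $f(x_k)\le F_k$, it yields the claim.

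The main obstacle is the expansion step. It requires the uniform $o(\|h\|)$ estimate for $\nabla f(\bar x+h)-(\nabla f)'(\bar x;h)$, which is where semismoothness enters, rather than mere directional differentiability. If deriving this from the paper's definition is delicate, I would instead cite the standard result (e.g. Facchinei–Pang, or the analogous unit-step lemma in \cite{khanh2024globally}) and only add the remark $f(x_k)\le F_k$.
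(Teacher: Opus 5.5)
Your proof is correct, and its skeleton is the paper's. The paper's entire argument is the observation $F_k\ge f(x_k)$ plus a citation of Theorem~3.3 in \cite{facchinei1995minimization} for the monotone inequality $f(x_k+d_k)\le f(x_k)+c\langle\nabla f(x_k),d_k\rangle$, which is exactly your fallback.

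The difference is that you actually prove the monotone inequality. You do this via the expansion $f(y)=f(\bar x)+\tfrac12\langle\nabla f(y),y-\bar x\rangle+o(\|y-\bar x\|^2)$, evaluated at $y=x_k$ and at $y=x_k+d_k$. Every step checks out:
\begin{itemize}
\item the ratio $\|d_k\|/\|x_k-\bar x\|\to1$;
\item the Lipschitz bounds that turn the cross terms into $o(\|d_k\|^2)$;
\item the positivity of $\tfrac12-c$, which lets $-(\tfrac12-c)\kappa^{-1}\|d_k\|^2$ dominate all the $o(\|d_k\|^2)$ remainders, including the slack in the descent hypothesis.
\end{itemize}

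One correction to your closing remark. The uniform estimate $\nabla f(\bar x+h)=\nabla f(\bar x)+(\nabla f)'(\bar x;h)+o(\|h\|)$ is not where semismoothness enters. For a map that is Lipschitz near $\bar x$, it follows from mere directional differentiability by a compactness argument on the unit sphere.

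Consequently, your argument never uses the semismooth estimate involving $A\in\mathrm{co}\,\overline{\nabla}(\nabla f)(x)$. It proves the statement under the weaker hypothesis that $\nabla f$ is locally Lipschitz near $\bar x$ and directionally differentiable at $\bar x$. The citation route buys brevity; yours is self-contained and makes this weaker requirement visible.
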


\begin{proof}
    It follows from the Theorem 3.3 of \cite{facchinei1995minimization} and the fact that $F_k \ge f(x_k)$.
    
\end{proof}

\begin{proposition}\label{prop:super_linear_local_rate_curvuture_condition}
    Under the setting of the Proposition~\ref{prop:superlinear_convergent_rate}, eventually unite step size passes the curvuture condition (\ref{eq:wolf_condition_HAN}). 
\end{proposition}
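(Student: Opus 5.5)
We need to show that for all sufficiently large $k$ the unit step satisfies the curvature condition \eqref{eq:wolf_condition_HAN}, namely
\[
\langle \nabla f(x_k+d_k),d_k\rangle \ge c^\prime\langle \nabla f(x_k),d_k\rangle .
\]
The plan is to compare the sizes of the two sides. The left-hand side is $o(\|d_k\|^2)$. The right-hand side is negative and of exact order $\|d_k\|^2$. Throughout we work in the setting of Proposition~\ref{prop:superlinear_convergent_rate}, where $x_k\to\bar{x}$, $\nabla f(\bar{x})=0$, and $\nabla f$ is Lipschitz with modulus $l$ near $\bar{x}$. If $x_k=\bar{x}$ for some $k$, the algorithm stops, so we may assume $x_k\neq\bar{x}$.

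\textbf{Step 1 (the left-hand side is small).} By Claim~3 in the proof of Proposition~\ref{prop:superlinear_convergent_rate} we have $\|x_k+d_k-\bar{x}\|=o(\|x_k-\bar{x}\|)$. As in Claim~5, this gives $\|d_k\|/\|x_k-\bar{x}\|\to 1$, so $\|x_k+d_k-\bar{x}\|=o(\|d_k\|)$. Lipschitz continuity of $\nabla f$ and $\nabla f(\bar{x})=0$ then yield
\[
|\langle \nabla f(x_k+d_k),d_k\rangle|\le l\|x_k+d_k-\bar{x}\|\,\|d_k\| = o(\|d_k\|^2).
\]

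\textbf{Step 2 (the right-hand side is negative of order $\|d_k\|^2$).} Estimate \eqref{eq:super_linear_convergence_nonmonotone_direction_estimate} gives
\[
\langle \nabla f(x_k),d_k\rangle \le -\kappa^{-1}\|d_k\|^2+\langle\epsilon_k,d_k\rangle .
\]
Here $\|\epsilon_k\|=o(\|x_k-\bar{x}\|)=o(\|d_k\|)$, so $\langle\epsilon_k,d_k\rangle=o(\|d_k\|^2)$. Since $c^\prime\in(0,1)$ is positive, multiplying by $c^\prime$ gives
\[
c^\prime\langle \nabla f(x_k),d_k\rangle\le -\tfrac{c^\prime}{\kappa}\|d_k\|^2+o(\|d_k\|^2).
\]

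\textbf{Step 3 (conclusion).} Combining the two steps,
\[
\langle \nabla f(x_k+d_k),d_k\rangle - c^\prime\langle \nabla f(x_k),d_k\rangle \ge \|d_k\|^2\Big(\tfrac{c^\prime}{\kappa}+o(1)\Big).
\]
The bracket is positive for large $k$, and $d_k\ne0$ by Proposition~\ref{prop:well-poseness-nonmonotone-Algorithm}. Hence the curvature condition holds at $\alpha_k=1$ for all sufficiently large $k$.

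This argument needs neither assumption (i) nor (ii) of Proposition~\ref{prop:superlinear_convergent_rate}; it uses only Claim~3, the tilt-stability estimate, and Lipschitz continuity. The main obstacle is bookkeeping. We must make sure that Claim~3 and estimate \eqref{eq:super_linear_convergence_nonmonotone_direction_estimate} are established independently of step acceptance, so that the argument is not circular. We must also convert every $o(\|x_k-\bar{x}\|)$ term into $o(\|d_k\|)$ via the ratio limit, which requires $x_k\to\bar{x}$ along the whole sequence or the relevant subsequence.
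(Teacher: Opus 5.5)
Your proof is correct. Structurally it is the paper's argument for case (i), with one substitution that makes the paper's case split unnecessary.

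Under (i), the paper invokes semismoothness of $\nabla f$ at $\bar x$ together with boundedness of the Clarke generalized Jacobians to obtain $\nabla f(x_k+d_k)=o(\|d_k\|)$. It then finishes exactly as in your Steps~2--3. Under (ii), it argues differently. It uses a strong-monotonicity inequality for $\nabla f$ derived from the tilt-stability growth condition, the bound $|\langle\nabla f(x_k),d_k\rangle|\le l\|x_k-\bar x\|\,\|d_k\|$, and the extra restriction $(1-c^\prime)l<\kappa^{-1}$.

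You replace the semismooth expansion by the plain Lipschitz estimate $\|\nabla f(x_k+d_k)\|\le l\|x_k+d_k-\bar x\|$, which suffices once Claim~3 is available. Claim~3 and \eqref{eq:super_linear_convergence_nonmonotone_direction_estimate} rest only on the standing hypotheses: semismoothness$^*$, metric regularity, and the inexactness condition. So your single argument covers both cases and shows the restriction on $c^\prime$ in (ii) is not needed for this proposition. The paper's case (ii) route gains nothing in exchange, since it also relies on the ratio limit \eqref{eq:super_linear_convergence_nonmonotone_direction_error_limit_1} and hence on Claim~3.

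Deriving that ratio from $x_k+d_k$ rather than $x_{k+1}$ is also the right choice. The paper's version uses $\|d_k\|=\|x_{k+1}-x_k\|$, which tacitly presupposes that the unit step is accepted. The estimate $\bigl|\|d_k\|-\|x_k-\bar x\|\bigr|\le\|x_k+d_k-\bar x\|$ gives the ratio with no circularity.

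The point you flag as open is convergence of the whole sequence to $\bar x$, rather than only a subsequence. The paper takes this for granted as well.
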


\begin{proof}
First we consider the assumption (i) and show that eventually the step size $\alpha_k=1$ passes the condition (\ref{eq:wolf_condition_HAN}). To do that, by $\text{semismooth}^*$ property and assumption (i), we have the semismoothness of $\nabla f$ at $\bar{x}$ which gives us
\[
\nabla f(x_k+d_k)=\nabla f(\bar{x})+A_k(x_k+d_k-\bar{x})+o(\|x_k+d_k-\bar{x}\|),
\]
where $A_k \in \partial_C^2 f(x_k+d_k):=\textnormal{co}\overline{\nabla}(\nabla f(x_k+d_k))$. By (\ref{eq:superlineae_local_rate_last_equation}), (\ref{eq:super_linear_convergence_nonmonotone_direction_error_limit_1}), and Proposition 2.6.2(d) in \cite{clarke2023optimization} (which indicate  that the matrix sequence $\{H_k\}$ is bounded), we obtain $\nabla f(x_k+d_k)=o(\|x_k-\bar{x}\|)$ and $\nabla f(x_k+d_k)=o(\|d_k\|)$. In addition, using ($\ref{eq:super_linear_convergence_nonmonotone_direction_estimate}$), $\|\epsilon_k\|=o(\|x_k -\bar{x}\|)$, and ($\ref{eq:super_linear_convergence_nonmonotone_direction_error_limit_1}$) we have

\begin{equation}\label{eq:angle_estimation_with_direction_norm}
    \langle \nabla f(x_k),-d_k \rangle \ge \kappa^{-1}\|d_k\|^2+\langle \epsilon_k,-d_k \rangle =\kappa^{-1}\|d_k\|^2+o(\|d_k\|^2).
\end{equation}
Hence, for large $k  \in \mathbb{N}$, we have
\begin{equation}
\begin{split}
&    \langle \nabla f(x_k+d_k), d_k \rangle- c^\prime \langle \nabla f(x_k),d_k \rangle \\
&=- c^\prime \langle \nabla f(x_k),d_k \rangle+o (\|d_k\|^2) \ge \frac{c^\prime}{\kappa}\|d_k\|^2+o(\|d_k\|^2)+o(\|d_k\|^2)>0.
\end{split}
\end{equation}

Next, we consider the assumption (ii). Based on Claim 1 in Proposition \ref{prop:superlinear_convergent_rate}, $\bar{x}$ is a tilt-stable minimizer. Due to this fact, by applying (\ref{eq:second_ordr_condition_of_the_tilt_stable_minimizer}) from Proposition \ref{prop:tilt_stability_second_order_growth}, we obtain the following inequality  
\begin{equation}\label{eq:second_order_condiiton_tilt_stable_minimizer_2}
    \langle \nabla f(x)-\nabla f(u) \rangle \ge \frac{1}{2 \kappa}\|x-u\|^2.
\end{equation}
By $c^\prime >1-\frac{1}{l \kappa}$ and ($\ref{eq:super_linear_convergence_nonmonotone_direction_error_limit_1}$), 
\begin{equation}
    \begin{split}
        & \langle \nabla f(x_k+d_k),-d_k \rangle - c^\prime \langle \nabla f(x_k),- d_k \rangle\\
        & \langle \nabla f(x_k),- d_k \rangle - \frac{1}{\kappa} \|d_k\|^2 -c^\prime \langle \nabla f(x_k),-d_k \rangle \\
        & \le \|d_k\|^2 \Big(-\frac{1}{\kappa}+(1-c^\prime)l\frac{\|x_k-\bar{x}\|}{\|d_k\|}\Big)<0,
    \end{split}
\end{equation}
where the first inequality obtained from (\ref{eq:second_order_condiiton_tilt_stable_minimizer_2}). The proof is complete.

\end{proof}

\newpage
\printbibliography

\end{document}